\newtheorem{prop}{Proposition}[section]
\newtheorem{teo}[prop]{Theorem}
\newtheorem{lemma}[prop]{Lemma}
\newtheorem{cor}[prop]{Corollary}
\theoremstyle{definition}
\newtheorem*{definizione}{Definition}
\newtheorem{oss}[prop]{Remark}
\newtheorem*{ack}{Acknowledgments}
\date{\today}
\keywords{Poincar\'e inequality, eigenvalue estimates, fractional Sobolev spaces,  fractional Laplacian, inradius, capacity.}
\subjclass[2010]{47A75, 39B72, 35R11}
\numberwithin{equation}{section}
\title[Lower bound with topological constraints]{An optimal lower bound\\ in fractional spectral geometry\\ for planar sets with topological constraints}
\author[Bianchi]{Francesca Bianchi}
\address[F.\ Bianchi]{Dipartimento di Scienze Matematiche, Fisiche e Informatiche
	\newline\indent
	Universit\`a di Parma
	\newline\indent
	Parco Area delle Scienze 53/a, Campus, 43124 Parma, Italy}
\email{francesca.bianchi@unipr.it}
\author[Brasco]{Lorenzo Brasco}
\address[L.\ Brasco]{Dipartimento di Matematica e Informatica
	\newline\indent
	Universit\`a degli Studi di Ferrara
	\newline\indent
	Via Machiavelli 35, 44121 Ferrara, Italy}
\email{lorenzo.brasco@unife.it}
\begin{document}

\begin{abstract}
We prove a lower bound on the first eigenvalue of the fractional Dirichlet-Laplacian of order $s$ on planar open sets, in terms of their inradius and topology. The result is optimal, in many respects. In particular, we recover a classical result proved independently by Croke, Osserman and Taylor, in the limit as $s$ goes to $1$. The limit as $s$ goes to $1/2$ is carefully analyzed, as well. 
\end{abstract}

\maketitle

\begin{center}
\begin{minipage}{10cm}
\small
\tableofcontents
\end{minipage}
\end{center}

\section{Introduction}

	\subsection{Goal of the paper}
	In this paper, we pursue our investigation on geometric estimates for the following sharp fractional Poincar\'e constant
\begin{equation}\label{lambda1s}
	\lambda^s_1(\Omega):=\inf_{u\in C^\infty_0(\Omega)\setminus\{0\}}\dfrac{[u]^2_{W^{s,2}(\mathbb{R}^2)}}{\|u\|^2_{L^2(\Omega)}},
\end{equation}
on planar open sets $\Omega\subseteq\mathbb{R}^2$. Here the parameter $0<s<1$ represents a fractional order of differentation and the quantity $[\,\cdot\,]_{W^{s,2}(\mathbb{R}^2)}$ is given by
	\[
[u]_{W^{s,2}(\mathbb{R}^2)}=\left(\iint_{\mathbb{R}^2\times\mathbb{R}^2}\frac{|u(x)-u(y)|^2}{|x-y|^{2+2\,s}}\,dx\,dy\right)^\frac{1}{2},\qquad \mbox{ for every } u\in C^\infty_0(\mathbb{R}^2).
\]
All functions in $C^\infty_0(\Omega)$ are considered as elements of $C^\infty_0(\mathbb{R}^2)$, by extending them to be zero outside $\Omega$. The infimum in \eqref{lambda1s} can be equivalently performed on the space $\widetilde W^{s,2}_0(\Omega)$. The latter is defined as the closure of $C^\infty_0(\Omega)$ in the fractional Sobolev-Slobodecki\u{\i} space
\[
W^{s,2}(\mathbb{R}^2)=\Big\{u\in L^2(\mathbb{R}^2)\, :\, [u]_{W^{s,2}(\mathbb{R}^2)}<+\infty\Big\},
\] 
endowed with its natural norm. 
Whenever the infimum \eqref{lambda1s} becomes a minimum on this larger space $\widetilde{W}^{s,2}_0(\Omega)$, the quantity $\lambda_1^s(\Omega)$ will be called {\it first eigenvalue of the fractional Dirichlet-Laplacian of order $s$ on} $\Omega$.
\par
The constant $\lambda^s_1(\Omega)$ can be seen as a fractional counterpart of 
\[
\lambda_1(\Omega):=\inf_{u\in C^\infty_0(\Omega)\setminus\{0\}}\dfrac{\|\nabla u\|^2_{L^2(\Omega)}}{\|u\|^2_{L^2(\Omega)}},
\]
which coincides with the bottom of the spectrum of the more familiar Dirichlet-Laplacian on $\Omega$. The link between $\lambda_1^s$ and $\lambda_1$ can be made more precise by recalling that 
\[
\lim_{s\nearrow 1}(1-s)\,[u]^2_{W^{s,2}(\mathbb{R}^2)}=C\,\|\nabla u\|^2_{L^2(\Omega)},\qquad \mbox{ for every }u\in C^\infty_0(\Omega),
\]
for some universal constant $C>0$, see \cite{BBM} or \cite[Chapter 3]{EE}.
\vskip.2cm\noindent
The present paper is a continuation of our previous work \cite{BB}, to which we refer for more background material. In particular, we still focus on getting lower bounds on $\lambda_1^s(\Omega)$, in terms of the {\it inradius} of $\Omega$, which is defined by
\[
r_\Omega:=\sup\Big\{r>0\, :\, \exists\, x_0\in \Omega \mbox{ such that } B_r(x_0)\subseteq \Omega\Big\},
\] 
where $B_r(x_0)$ is the open disk of center $x_0$ and radius $r$.\par
In \cite[Theorem 1.1]{BB}, extending a classical result of Makai \cite{Mak} and Hayman \cite{Ha} valid for $\lambda_1$ (see also \cite{An, BCproc} and \cite{BC}), we showed that we have 
\[
\lambda_1^s(\Omega)\ge \mathcal{C}_s\,\left(\frac{1}{r_\Omega}\right)^{2\,s},
\]
for every {\it simply connected} open set $\Omega\subseteq\mathbb{R}^2$ with finite inradius and for every $1/2<s<1$. Here the constant $\mathcal{C}_s$ depends on $s$ only and it has the following asymptotic behaviours\footnote{Here, the writing ``$f\sim g$ for $x\to x_0$'' as to be intended in the following sense
	\[
	0<\liminf_{x\to x_0} \frac{f(x)}{g(x)}\le \limsup_{x\to x_0} \frac{f(x)}{g(x)}<+\infty.
	\]}
	\[
	\mathcal{C}_s\sim (2\,s-1)\ \mbox{ for } s\searrow \frac{1}{2}\qquad \mbox{ and }\qquad 
\mathcal{C}_s\sim \frac{1}{1-s}\ \mbox{ for } s\nearrow 1.
\]
Moreover, we showed by means of a counterexample, that for $0<s\le 1/2$ such a lower bound is not possible (see \cite[Theorem 1.3]{BB}).
\vskip.2cm\noindent
In the present paper we considerably extend this result, by considering open connected planar sets having {\it non-trivial topology}. 
More precisely, we will work with the following class of sets:	
\begin{definizione}\label{def: k connesso con compattificazione}
Let us indicate by $(\mathbb{R}^2)^*$ the {\it one-point compactification} of $\mathbb{R}^2$, i.e. the compact space obtained by adding to $\mathbb{R}^2$ the point at infinity. We say that an open connected set $\Omega\subseteq\mathbb{R}^2$ is {\it multiply connected of order $k$} if its complement in $(\mathbb{R}^2)^*$  has $k$ connected components. When $k=1$, we will simply say that $\Omega$ is {\it simply connected}.
\end{definizione}
We thus seek for an estimate of the type
\[
\lambda_1^s(\Omega)\ge \mathcal{C}_{s,k}\,\left(\frac{1}{r_\Omega}\right)^{2\,s},
\]
for open multiply connected sets of order $k$ in the plane. In light of the simply connected case recalled above, we can directly restrict our analysis to the case $1/2<s<1$ only.
\subsection{The Croke-Osserman-Taylor inequality} 
For the classical case of $\lambda_1$, the first lower bound of this type is due to Osserman. Notably, \cite[Theorem p. 546]{Os} shows that
\[
\lambda_1(\Omega)\ge \min\left\{\frac{1}{4},\, \frac{1}{k^2}\right\}\,\left(\frac{1}{r_\Omega}\right)^2,
\]
for every $\Omega\subseteq\mathbb{R}^2$ open multiply connected of order $k$.
The proof by Osserman is based on a refinement of the so-called {\it Cheeger's inequality}, in conjunction with Bonnesen--type inequalities.
\par
It turns out that the estimate by Osserman does not display the sharp dependence on the topology of the sets, i.e. the term $1/k^2$ is sub-optimal, as $k$ diverges to $\infty$.
Indeed, the result by Osserman has been improved by Taylor in \cite[Theorem 2]{Ta}, showing that 
\[
\lambda_1(\Omega)\ge\frac{C}{k}\,\left(\frac{1}{r_\Omega}\right)^2,
\]
for some constant $C>0$ which is not made explicit in \cite{Ta}. The dependence on $k$ is now optimal, for $k$ going to $\infty$.
The proof by Taylor is quite sophisticated and completely different from Osserman's one: it is based on estimating the first eigenvalue with mixed boundary conditions (i.e. Dirichlet and Neumann) of a set, in terms of the {\it capacity} of the ``Dirichlet region''. 
Such an estimate is achieved by means of heat kernel estimates. This method is connected with Taylor's work \cite{Ta2} on the {\it scattering length} of a positive potential, which acts as a perturbation of the Laplacian (see also \cite{Siu} for a generalization to the case of the fractional Laplacian).  
We will come back in a moment on Taylor's proof, since our main result will be based on the same arguments.
\par 
An improvement of Taylor's estimate has been given by Croke, who gives the explicit lower bound 
\[
\lambda_1(\Omega)\ge\frac{1}{2\,k}\,\left(\frac{1}{r_\Omega}\right)^2,
\] 
for $k\ge2$ (see \cite[Theorem]{Cr}). The proof by Croke is more elementary and based on refining Osserman's argument.
\par
Finally, for completeness we mention \cite[Theorem 3]{GR} by Graversen and Rao, which proves the following lower bound
\[
\lambda_1(\Omega)\ge\frac{C_k}{r_\Omega^2},\qquad\mbox{where }\qquad C_k=\left\{\begin{array}{cc}
1/4, & \mbox{ if } k=1,\\
&\\
\dfrac{A}{k\,\log k}, & \mbox{ if }k\ge 2,
\end{array}
\right.
\] 
for some $A>0$ (see \cite[Theorem 3]{GR}). Their result is slightly worse when compared with the ones by Croke and Taylor. We notice that the proof in \cite{GR} uses techniques from the theory of Brownian motion, which are quite close to the ideas by Taylor. 

\subsection{Main results}	
Our goal is to generalize the Croke--Osserman--Taylor result to the setting of fractional Sobolev spaces. We also want to discuss the optimality of the estimate we obtain, with respect to the parameters $k$ and $s$.

\begin{teo}[Main Theorem]
\label{teo:main}
	Let $1/2<s<1$, there exists a constant $\vartheta_s>0$ such that for every $\Omega\subseteq\mathbb{R}^2$ open multiply connected set of order $k\in\mathbb{N}\setminus\{0\}$, we have 
	\begin{equation}
	\label{fracOTC}
	\lambda_1^s(\Omega) \ge \frac{\vartheta_s}{k^s}\,\left(\frac{1}{r_\Omega}\right)^{2\,s}.
	\end{equation}
	Moreover, the constant $\vartheta_s$ has the following asymptotic behaviours
	\[
	\vartheta_s\sim (2\,s-1)\ \mbox{ for } s\searrow \frac{1}{2}\qquad \mbox{ and }\qquad 
\vartheta_s\sim \frac{1}{1-s}\ \mbox{ for } s\nearrow 1.
\]
\end{teo}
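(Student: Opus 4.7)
The plan is to adapt to the fractional setting the capacitary argument of Taylor, which yielded the sharp integer-order inequality. I would first cover $\Omega$ by a family $\{B_{r_\Omega}(x_j)\}_{j \in J}$ of disks of radius $r_\Omega$ with uniformly bounded overlap, noting that by definition of inradius every doubled disk $B_{2r_\Omega}(x_j)$ must intersect $\Omega^c$, so that the ``obstacle''
\[
E_j := \overline{B_{2r_\Omega}(x_j) \setminus \Omega}
\]
is non-empty. The plan is then to control the global Rayleigh quotient by summing contributions from each disk of this cover, letting the topological assumption enter through a lower bound on the fractional capacity of the $E_j$.

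The local ingredient would be a Maz'ya-type capacitary Poincar\'e inequality: for each $u \in \widetilde{W}_0^{s,2}(\Omega)$ and each $j \in J$,
\[
\int_{B_{r_\Omega}(x_j)} |u|^2 \, dx \;\le\; \frac{C_s \, r_\Omega^{2s}}{\mathrm{cap}_s(E_j;\, B_{4r_\Omega}(x_j))} \iint_{B_{4r_\Omega}(x_j) \times B_{4r_\Omega}(x_j)} \frac{|u(x)-u(y)|^2}{|x-y|^{2+2s}} \, dx\,dy,
\]
where $\mathrm{cap}_s$ denotes the fractional relative $s$-capacity. I would derive this by testing against the capacitary potential of $E_j$ and combining with the fractional Poincar\'e inequality on a disk, tracking the blow-ups $C_s \sim 1/(2s-1)$ at $s=1/2$ and $C_s \sim 1-s$ at $s=1$. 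The topology enters via a uniform lower bound $\mathrm{cap}_s(E_j;\, B_{4r_\Omega}(x_j)) \ge \gamma_s\, r_\Omega^{2-2s}$, coming from the fact that each $E_j$ is forced to reach one of the $k$ components of $(\mathbb{R}^2)^* \setminus \Omega$ and each such component, viewed at the scale $r_\Omega$, has non-trivial fractional $s$-capacity.

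Summing the local inequalities and regrouping the disks according to which component of $(\mathbb{R}^2)^* \setminus \Omega$ the relevant obstacle attaches to, the $k$ components should produce a factor of $k^s$ after applying a H\"older-type subadditivity for the fractional capacity over the sum; this is the fractional counterpart of Croke's sharper counting argument. Rearranging then yields the claimed inequality \eqref{fracOTC}, with $\vartheta_s$ proportional to the ratio $\gamma_s/C_s$, from which the prescribed asymptotic behaviours at $s \searrow 1/2$ and $s \nearrow 1$ follow directly.

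The main obstacle will be the sharp $k^s$ dependence in the regrouping step: Croke's proof exploits planar area comparisons and Bonnesen-type inequalities which do not transfer verbatim to the non-local setting, so the fractional analogue requires a subadditivity estimate for $\mathrm{cap}_s$ that degrades in $k$ precisely like $k^s$, rather than like $k$ or $k^{2s}$. A secondary difficulty is calibrating the constants in the local capacitary Poincar\'e so that the full ratio $\vartheta_s$ exhibits the claimed behaviour at both endpoints $s=1/2$ and $s=1$ simultaneously, since the fractional Poincar\'e constant and the fractional capacity lower bound degenerate at opposite ends of $(1/2,1)$.
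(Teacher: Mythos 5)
There is a genuine gap, and it sits exactly at the point you flag as ``the topological input''. The claimed uniform lower bound $\mathrm{cap}_s(E_j;B_{4r_\Omega}(x_j))\ge \gamma_s\,r_\Omega^{2-2s}$ is false. The inradius bound only forces $E_j=\overline{B_{2r_\Omega}(x_j)\setminus\Omega}$ to be \emph{non-empty}; it does not force it to be capacious. A connected component of $(\mathbb{R}^2)^*\setminus\Omega$ can be a single point, and points have zero $W^{s,2}$-capacity in the plane for every $s<1$. The paper's own optimality examples make this concrete: the set $\Omega_k$ of Theorem \ref{teo:optimal}(2) (a square of side $n_k\simeq\sqrt{k}$ with the $n_k^2$ cell centers removed) has $r_{\Omega_k}\le\sqrt{2}/2$, yet for every interior disk of radius $2r_{\Omega_k}$ the obstacle $E_j$ is a finite set of points, so $\mathrm{cap}_s(E_j;B_{4r_\Omega}(x_j))=0$ and your local inequality gives nothing --- consistently with the fact that $\lambda_1^s(\Omega_k)=\lambda_1^s((0,n_k)^2)\to 0$ while $r_{\Omega_k}$ stays bounded. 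In other words, at the fixed scale $r_\Omega$ no capacity lower bound depending only on $(k,r_\Omega)$ can hold, so the whole scheme of summing disk-by-disk contributions at scale $r_\Omega$ and recovering the factor $k^{-s}$ by a ``H\"older-type subadditivity over the $k$ components'' cannot be repaired: the $k^{s}$ loss is not a capacity-subadditivity phenomenon at all.

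What the paper does instead is to enlarge the scale with the topology. One tiles the plane by squares of side $10(\lfloor\sqrt{k}\rfloor+1)\,r_\Omega$ and proves Taylor's fatness lemma (Lemma \ref{lm:rettangolo distante}): such a square contains $\sim 4k$ cells of size $\sim r_\Omega$, at most $k$ of which can entirely swallow a component of $(\mathbb{R}^2)^*\setminus\Omega$; each of the remaining cells produces a continuum in $\overline{\mathcal{Q}}\setminus\Omega$ of diameter $\ge r_\Omega$ reaching the cell boundary, and, after accounting for overlaps, the union $\Sigma$ of these continua has a coordinate projection of length $\ge \tfrac{\sqrt{k}}{4}r_\Omega$. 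The capacity of $\Sigma$ relative to a comparable ball is then bounded below through this projection length (Proposition \ref{prop: capacità 2d VS capacità 1d}), which is precisely where $s>1/2$ enters: the dimensional reduction via directional fractional seminorms (Proposition \ref{prop:direzioni}) reduces matters to the one-dimensional capacity of a point, controlled by the Morrey constant $\mathfrak{m}_s$ of Theorem \ref{teo:MC}; your sketch never engages with this, and without it even a fat continuum gives no quantitative capacity bound with the right $s$-asymptotics. Finally, the Maz'ya--Poincar\'e inequality (Proposition \ref{lm:poincare_cap}) applied on the big square contributes a factor $\simeq (\sqrt{k}\,r_\Omega)^{-2}$, and the product $(\sqrt{k}\,r_\Omega)^{-2}\cdot \mathfrak{m}_s\,(\sqrt{k}\,r_\Omega)^{1-2s}\cdot\sqrt{k}\,r_\Omega\simeq \mathfrak{m}_s\,k^{-s}r_\Omega^{-2s}$ is where both the sharp $k^{-s}$ and the asymptotics $\vartheta_s\sim(2s-1)$, $\vartheta_s\sim(1-s)^{-1}$ actually come from (the latter through $\mathfrak{m}_s$, not through a degeneration of the Poincar\'e constant as you posit).
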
	
The next result shows that the estimate \eqref{fracOTC} is sharp, {\it apart from the evaluation of the absolute constant\footnote{This is a quotation from Taylor's paper, see \cite[page 452]{Ta}.}}.
\begin{teo}[Optimality] 
\label{teo:optimal}
The following facts hold:
\begin{enumerate}
\item for every $\Omega\subseteq\mathbb{R}^2$ open set, we have
\[
\limsup_{s\nearrow 1} (1-s)\,\lambda_1^s(\Omega)\le \frac{1}{2}\,\lambda_1(\Omega).
\]
Thus, the estimate \eqref{fracOTC} is sharp in its dependence on $s\nearrow 1$.
In particular, by taking the limit as $s$ goes to $1$ in \eqref{fracOTC}, we get the classical Croke-Osserman-Taylor inequality, possibly with a worse constant;
\vskip.2cm
\item let $1/2<s<1$, there exists a sequence $\{\Omega_k\}_{k\in\mathbb{N}\setminus\{0\}}\subseteq \mathbb{R}^2$ of open sets such that $\Omega_k$ is multiply connected of order $k$
\[
r_{\Omega_k}\le C\qquad \mbox{ and }\qquad  \limsup_{k\to\infty} k^s\,\lambda_1^s(\Omega_k)<+\infty.
\]
Thus the estimate \eqref{fracOTC} is sharp in its dependence on $k\to \infty$;
\vskip.2cm
\item for every $k\in\mathbb{N}\setminus\{0\}$, there exists $\Theta_k\subseteq \mathbb{R}^2$ an open multiply connected set of order $k$, such that 
\[
r_{\Theta_k}<+\infty\qquad \mbox{ and }\qquad \limsup_{s\searrow \frac{1}{2}} \frac{\lambda_1^s(\Theta_k)}{2\,s-1}<+\infty.
\]
Thus, the estimate \eqref{fracOTC} is sharp in its dependence on $s\searrow 1/2$.
\end{enumerate}
\end{teo}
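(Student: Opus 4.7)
\emph{Part (1)} is a direct application of the Bourgain--Brezis--Mironescu convergence formula. For every $u\in C^\infty_0(\Omega)$,
\[
\lim_{s\nearrow 1}(1-s)\,[u]^2_{W^{s,2}(\mathbb{R}^2)}=\tfrac{1}{2}\,\|\nabla u\|^2_{L^2(\Omega)},
\]
once the universal BBM constant associated to the seminorm \eqref{lambda1s} is computed to be $1/2$ in dimension two. Using $u$ as a test function in \eqref{lambda1s},
\[
(1-s)\,\lambda_1^s(\Omega)\le (1-s)\,\frac{[u]^2_{W^{s,2}(\mathbb{R}^2)}}{\|u\|^2_{L^2(\Omega)}};
\]
taking the $\limsup$ as $s\nearrow 1$ and then the infimum over $u\in C^\infty_0(\Omega)$ yields the claim. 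Combining this bound with Theorem~\ref{teo:main} and the asymptotic $\vartheta_s\sim 1/(1-s)$ recovers a Croke--Osserman--Taylor type inequality $\lambda_1(\Omega)\ge 2\bigl(\liminf_{s\to 1}(1-s)\vartheta_s\bigr)k^{-1}r_\Omega^{-2}$, possibly with a worse absolute constant.

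\emph{Part (2)} will be proved by exhibiting $\Omega_k$ as a large disk pierced by a regular array of small holes. Fix $\ell>0$ and $\rho\in(0,\ell/2)$, and set
\[
\Omega_k\,=\,B_{R_k}(0)\setminus\bigcup_{j=1}^{k-1}\overline{B_{\rho}(x_j)},
\]
with $\{x_j\}\subseteq \ell\,\mathbb{Z}^2$ and $R_k\sim \ell\sqrt{k}$ chosen so as to trap exactly $k-1$ lattice points. Then $\Omega_k$ is connected, multiply connected of order $k$, and $r_{\Omega_k}\le \ell\sqrt{2}$. As a test function I take the first $s$-eigenfunction $\varphi_{R_k}$ of the ball $B_{R_k}$ and modify it by a fractional capacitary cut-off vanishing on each hole. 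The unperturbed Rayleigh quotient is $\lambda_1^s(B_{R_k})=c_s R_k^{-2s}\sim k^{-s}$, while the correction is controlled by $(k-1)\,\mathrm{cap}_s(\overline{B_\rho})\,\|\varphi_{R_k}\|_\infty^2$, which is negligible once $\rho$ is taken small enough (for each fixed $s\in(1/2,1)$), giving $\lambda_1^s(\Omega_k)\le C_s\,k^{-s}$.

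\emph{Part (3)} will rely on starting from a simply connected reference set that realizes the sharpness of $\mathcal{C}_s\sim 2s-1$ uniformly in $s$ near $1/2$, and then removing $k-1$ small disks. Concretely, let $\Omega_\star\subseteq\mathbb{R}^2$ be simply connected with $r_{\Omega_\star}<+\infty$ and a family of test functions $u_s\in C^\infty_0(\Omega_\star)$ whose Rayleigh quotient is of order $(2s-1)$ as $s\searrow 1/2$, as furnished by the sharpness analysis underlying Theorem~1.1 of \cite{BB}. Define $\Theta_k$ by removing from $\Omega_\star$ a collection of $k-1$ pairwise disjoint small closed disks $\overline{B_\rho(y_j)}$, making $\Theta_k$ connected and of topological order $k$. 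A capacitary truncation of $u_s$ around the holes produces $v_s\in\widetilde W^{s,2}_0(\Theta_k)$ with
\[
[v_s]^2_{W^{s,2}(\mathbb{R}^2)}\le [u_s]^2_{W^{s,2}(\mathbb{R}^2)}+C\,(k-1)\,\mathrm{cap}_s(\overline{B_\rho})\,\|u_s\|_\infty^2
\]
and $\|v_s\|^2_{L^2}\ge \tfrac{1}{2}\|u_s\|^2_{L^2}$. Choosing $\rho$ small enough (depending on $k$, but uniformly for $s$ in a right half-neighborhood of $1/2$), the capacitary perturbation remains $O(2s-1)$, proving $\lambda_1^s(\Theta_k)\le C_k\,(2s-1)$ as $s\searrow 1/2$.

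\emph{Main obstacle.} The delicate step, common to parts (2) and (3), is the fractional capacitary truncation of a test function around several small obstacles, with uniform control of the correction in terms of the $s$-capacity of a disk. This is especially critical in part (3), where $\mathrm{cap}_s(\overline{B_\rho})\sim\rho^{2-2s}$ degenerates as $s\searrow 1/2$, so that the balance between the number of holes, their radii and the $L^\infty$ bounds on $u_s$ must be managed carefully to preserve the $(2s-1)$ scaling of the Rayleigh quotient.
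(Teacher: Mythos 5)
Your part (1) is correct and is essentially the paper's own argument: the Bourgain--Brezis--Mironescu formula applied to a fixed $u\in C^\infty_0(\Omega)$, followed by $\limsup$ in $s$ and infimum in $u$. Nothing to add there.

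Your part (2) takes a genuinely different route. The paper's sets are a square of side $\sim\sqrt{k}$ (plus a thin strip to adjust the count) from which $k-1$ \emph{points} are removed: since points have zero $W^{s,2}$-capacity in the plane for every $s<1$, removing them leaves $\lambda_1^s$ unchanged (this is Lemma \ref{lm:capacitapunti}, proved by an explicit cut-off of size $\varepsilon^{1-s}$), while it does raise the connectivity order and does cap the inradius; hence $\lambda_1^s(\Omega_k)\le n_k^{-2s}\lambda_1^s((0,1)^2)$ with no correction term at all, and the same sets serve every $s$. Your pierced disk with $k-1$ holes of radius $\rho$ can be made to work for each fixed $s$ (the statement allows $\Omega_k$ to depend on $s$), provided $\rho=\rho(k,s)$ is taken so small that, after the Minkowski-type splitting $[\varphi(1-\sum_j\eta_j)]\le[\varphi]+\|\varphi\|_\infty\,[\sum_j\eta_j]$, the correction $\|\varphi\|_\infty\,[\sum_j\eta_j]\lesssim k^{-1/2}\sqrt{k\,\rho^{2-2s}}=\rho^{1-s}$ is negligible against $[\varphi_{R_k}]\sim k^{-s/2}$; but this capacitary truncation (near-minimizers equal to $1$ on a neighbourhood of each hole, the product estimate including cross terms, the control of the $L^2$ loss) is precisely the step you leave unproved, and your stated correction $(k-1)\,\mathrm{cap}_s(\overline{B_\rho})\,\|\varphi_{R_k}\|_\infty^2$ is only the square of it. So: repairable, but strictly more work than the paper's point-removal trick, which avoids capacity estimates entirely.

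Part (3) contains a genuine gap. You start from ``a simply connected $\Omega_\star$ with $r_{\Omega_\star}<+\infty$ and test functions $u_s$ whose Rayleigh quotient is of order $(2s-1)$ as $s\searrow 1/2$, as furnished by the sharpness analysis underlying Theorem 1.1 of \cite{BB}''. No such result exists in \cite{BB}; the paper states explicitly that the sharp decay rate at $s=1/2$ is new \emph{already in the simply connected case}, and producing such a family is exactly the hard content of its proof: the infinite complement comb $\Theta$ minus $k-1$ points, the product lemma giving $\lambda_1^s(\Theta_k)\le\alpha_s\,\lambda_1^s(\mathbb{R}\setminus\mathbb{Z})$, and the $s$-dependent trial functions $u_n\,\varphi_{n,s,\varepsilon}$ built from the funnel cut-offs $\zeta_s(x)=(1-|x|^{2s-1})_+$, whose seminorm bound $[\zeta_s]^2_{W^{s,2}(\mathbb{R})}\lesssim(2s-1)/(1-s)$ (Lemma \ref{lm:stimonaMFO}) together with the choices $\varepsilon=(1/10)^{1/(2s-1)}$ and $n\sim(2s-1)^{-2}$ yields the rate. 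Your argument therefore reduces the statement to an unproved input which is the statement's core. Moreover, since $\Theta_k$ must be a single set independent of $s$, your holes have fixed radius $\rho>0$, and $\widetilde{\mathrm{cap}}_s(\overline{B_\rho};\cdot)$ does \emph{not} vanish as $s\searrow 1/2$; to keep the perturbation of order $2s-1$ you would need $\|u_s\|^2_{L^\infty}/\|u_s\|^2_{L^2}=O(2s-1)$, a quantitative property of the missing family that you never establish. Finally, note that removing \emph{points} rather than small disks (zero capacity in the plane for all $s<1$) would make the topological modification cost-free and eliminate your capacitary bookkeeping altogether -- but the real difficulty, the construction of the $s$-dependent trial functions realizing the $(2s-1)$ rate on a set of finite inradius, would remain untouched.
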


\subsection{Comments on the proofs}
As anticipated above, the statement of Theorem \ref{teo:main} contains our previous result \cite[Theorem 1.1]{BB} as a particular case. Indeed, the latter  was concerned with simply connected sets, i.e. with the case $k=1$. However, the proof given here is completely different: the elegant and elementary argument used in \cite{BB}, taken from \cite{Ha}, crucially exploited the simple connectedness and would not work here. Actually, a much more sophisticated argument is needed now. We also point out that it seems extremely complicated to adapt the proof by Osserman (and Croke), because a genuine Cheeger's inequality is still missing in the fractional case. 
\par
The general strategy for proving Theorem \ref{teo:main} will be the same as in \cite{Ta}. However, even if we closely follow Taylor's ideas, some important modifications are needed and new technical difficulties arise. In addition, we tried to simplify and/or expand some of the arguments contained in \cite{Ta}. We now expose the overall strategy of the proof and highlight the main changes needed to cope with the fractional case:
\begin{enumerate}
\item at first, we tile the whole plane $\mathbb{R}^2$ by a family of squares $\{\mathcal{Q}_{i,j}\}_{(i,j)\in \mathbb{Z}^2}$. By observing that for every $u\in C^\infty_0(\Omega)$ we have
\[
[u]^2_{W^{s,2}(\mathbb{R}^2)}\ge \sum_{(i,j)\in\mathbb{Z}^2} \iint_{\mathcal{Q}_{i,j}\times \mathcal{Q}_{i,j}}\frac{|u(x)-u(y)|^2}{|x-y|^{2+2\,s}}\,dx\,dy,
\]
we can reduce the problem to proving a ``regional'' fractional Poincar\'e inequality on squares such that $\mathcal{Q}_{i,j}\cap \Omega\not=\emptyset$. Of course, the main difficulty lies in getting such an inequality with an explicit constant, which only depends on the geometry (i.e. on $r_\Omega$) and topology (i.e. on $k$) of the open set $\Omega$;
\vskip.2cm
\item this type of Poincar\'e inequality is possible only if $u\in C^\infty_0(\Omega)$ vanishes on ``sufficiently large portions'' of $\mathcal{Q}_{i,j}$, for every square $\mathcal{Q}_{i,j}$ intersecting $\Omega$. Here ``largeness'' has to be intended in the sense of {\it fractional Sobolev capacity}. Thus, the first important step of this strategy is to prove a {\it Maz'ya-type Poincar\'e inequality} on a square, for functions vanishing on a compact subset $\Sigma$ of positive fractional capacity (see Proposition \ref{lm:poincare_cap}). The constant in such an inequality can be estimated from below in terms of the capacity of the ``Dirichlet region'' $\Sigma$;
\vskip.2cm
\item the second step consists in converting the previous {\it analytic} estimate into a {\it geometric} one. In other words, we have to bound from below the fractional capacity of the ``Dirichlet region'' $\Sigma$ in terms of some of its geometric features. This can be done by using orthogonal projections, which enable a dimensional reduction argument. In the two-dimensional setting, this permits to estimate the fractional capacity of $\Sigma$ in terms of the length of its orthogonal projection on a line. Such an estimate is possible as soon as {\it points have positive fractional capacity in dimension $1$}. This happens precisely if and only if $s>1/2$;
\vskip.2cm
\item the previous two points clarify that, in order to conclude the proof, we need to know that in each square $\mathcal{Q}_{i,j}$ intersecting $\Omega$, there is a ``Dirichlet region'' $\Sigma_{i,j}$ having at least an orthogonal projection ``large enough'', i.e. with a length depending on $r_\Omega$ and $k$ in a uniform way.
\par
Here we crucially rely on a topological argument by Taylor, that we have called ``Taylor's fatness lemma'' (see Lemma \ref{lm:rettangolo distante}). In a nutshell, it asserts that any multiply connected planar set $\Omega$ with finite inradius has a ``locally uniformly fat'' complement. This means that, if we choose the size of $\mathcal{Q}_{i,j}$ sufficiently large (in terms of $r_\Omega$ and $k$), then this square must contain a portion of $\mathbb{R}^2\setminus\Omega$ which has an orthogonal projection with
\[
\mbox{ length}\simeq \sqrt{k}\,r_\Omega,
\]
in a universal fashion, i.e. no matter the location of the square. 
\end{enumerate}
Differently from Taylor's paper, we work here with a {\it variational} definition of (fractional) capacity (see for example \cite{AFN, Ri, SX, Warma}), which appears more natural and well-adapted to the problem. This permits to prove the Poincar\'e inequality at point (2) above in an elementary way, by avoiding both the heat kernel estimate and the reference 
to an eigenvalue with mixed boundary conditions used in \cite{Ta}. Both points would have been problematic (or at least complicated) in the fractional setting.
Also, we point out that our proof of the Maz'ya--type Poincar\'e inequality is genuinely {\it nonlinear} in nature.
\par 
As for point (3): with respect to \cite{Ta}, we expand the explanations and try to make the geometric estimates as much quantitative as possible. There is in addition a technical difficulty linked to the fractional case: in the classical case treated by Taylor, one essential ingredient of the dimensional reduction argument is the following simple algebraic fact
\[
|\partial_\omega u|^2\le |\nabla u|^2,\qquad \mbox{ for every }\omega\in\mathbb{S}^{N-1}.
\]
In the fractional case, there is no direct analogue of this simple formula.
Nevertheless,
it is possible to give a sort of fractional counterpart of this property (see Proposition \ref{prop:direzioni}), but the proof is by far less straighforward: in order to prove it, we find it useful to resort to some {\it real interpolation techniques} (see also \cite[Appendix B]{BCV}). These permit to ``localize the nonlocality'', in a sense. We think this part to be interesting in itself.
\par
The ``fatness lemma'' of point (4) would be just a topological fact and could be directly recycled in the fractional case.
However, in \cite{Ta} this is not explicitly  stated in the form that can be found below. Here as well, we tried to add some details and precisions. We believe that the final outcome should be useful to have a better understanding of Taylor's proof.
\par
Finally, in all the estimates presented above, a great effort is needed in order to obtain the correct asymptotic behaviour of $s\mapsto \vartheta_s$ claimed in Theorem \ref{teo:main}. In particular, getting the sharp asymptotic behaviour for $s\searrow 1/2$ requires a very careful analysis. Accordingly, proving that the set $\Theta_k$ in Theorem \ref{teo:optimal} provides the sharp decay rate at $0$ needs quite refined (though elementary) estimates. We point out that this part is new already for the simply connected  case, previously considered in \cite{BB}.

\subsection{Plan of the paper}
All the needed notations are settled in Section \ref{sec:2}. Here we also state and prove Taylor's fatness lemma. Section \ref{sec:3} contains some technical facts on fractional Sobolev spaces which are useful for our main result, though hard to trace back in the literature. The uninterested reader may skip this part on a first reading. We then introduce the relevant notion of fractional capacity in Section \ref{sec:4} and prove the main building blocks for obtaining the fractional Croke-Osserman-Taylor inequality. Sections \ref{sec:5} and \ref{sec:6} contain the proofs of Theorems \ref{teo:main} and \ref{teo:optimal}, respectively. Finally, the paper is concluded by two appendices.

\begin{ack}
We thank Eleonora Cinti, Stefano Francaviglia and Francesca Prinari for some useful discussions. We also thank Rodrigo Ba\~nuelos for pointing out the reference \cite{Siu}.
The results of this paper have been announced during the mini-workshop ``{\it A Geometric Fairytale full of Spectral Gaps and Random Fruit\,}'', held at the Mathematisches Forschungsinstitut Oberwolfach in December 2022, as well as during the meeting ``{\it PDEs in Cogne: a friendly meeting in the snow\,}'', held in Cogne in January 2023.
We wish to thank the organizers 
for the kind invitations and the nice working atmosphere provided during the stayings.
\par
 F.B. is a member of the Gruppo Nazionale per l'Analisi Matematica, la Probabilit\`a
e le loro Applicazioni (GNAMPA) of the Istituto Nazionale di Alta Matematica (INdAM). Both authors have been financially supported by the {\it Fondo di Ateneo per la Ricerca} {\sc FAR 2020} of the University of Ferrara. 
\end{ack}

\section{Preliminaries}
\label{sec:2}

\subsection{Notation}
For every $\alpha\in\mathbb{R}$, we denote its {\it integer part} by 
\[
\big\lfloor\alpha\big\rfloor=\max\Big\{n\in\mathbb{Z}\, :\, \alpha\ge n\Big\}.
\]
We recall that
\begin{equation}
\label{mantissa}
\alpha-1\le \big\lfloor\alpha\big\rfloor\le \alpha,\qquad \mbox{ for every } \alpha\in\mathbb{R}.
\end{equation}
For $r>0$ and $x_0\in\mathbb{R}^N$, we will indicate 
\[
B_r(x_0)=\Big\{x\in\mathbb{R}^N\, :\, |x-x_0|<r\Big\},
\]
and
\[
Q_r(x_0)=\prod_{i=1}^N (x_0^i-r,x_0^i+r),\qquad \mbox{ where } x_0=(x_0^1,\dots,x_0^N).
\]
When the center $x_0$ coincides with the origin, we will simply write $B_r$ and $Q_r$, respectively. We will indicate by $\omega_N$ the $N-$dimensional Lebesgue measure of $B_1$.
\par
For completeness, we also recall the following classical definition from point-set topology.
\begin{definizione}
Let $K\subset \mathbb{R}^N$, we say that $K$ is a {\it continuum} if it is a non-empty compact and connected set.
\end{definizione}
For every $\omega\in\mathbb{S}^{N-1}$, we will indicate by
\[
\langle \omega\rangle^\bot=\Big\{x\in\mathbb{R}^N\, :\, \langle x,\omega\rangle=0\Big\},
\]
the orthogonal space to $\omega$. We will also set 
\begin{equation}
\label{projection}
\begin{array}{ccccc}
\Pi_\omega&:&\mathbb{R}^N &\to& \langle \omega\rangle^\bot\\
&& x&\mapsto & x-\langle x,\omega\rangle\,\omega,
\end{array}
\end{equation}
i.e. this is the orthogonal projection on $\langle \omega\rangle^\bot$. In particular, for $N=2$, if we indicate by $\mathbf{e}_1=(1,0)$ and $\mathbf{e}_2=(0,1)$ the normal vectors of the canonical basis, we get that 
\[
\Pi_{\mathbf{e}_1}(x_1,x_2)=(0,x_2),\qquad \Pi_{\mathbf{e}_2}(x_1,x_2)=(x_1,0),\qquad \mbox{ for every } (x_1,x_2)\in\mathbb{R}^2.
\]
For $m\in\mathbb{N}\setminus\{0\}$, we will indicate by $\mathcal{H}^m$ the $m-$dimensional Hausdorff measure.
\par
Finally, for $u\in L^1_{\rm loc}(\mathbb{R}^N)$ and a measurable set $E\subseteq\mathbb{R}^N$ with positive measure, we set
\[
\mathrm{av}(u;E):=\fint_E u\,dx=\frac{1}{|E|}\,\int_E u\,dx,
\]
the integral average of $u$ over $E$.
\subsection{Fatness of the complement of a multiply connected set}
As explained in the introduction, the following geometric result will be a crucial ingredient of our main result. 
\begin{lemma}[Taylor's fatness lemma]
\label{lm:rettangolo distante}
	Let $k\in\mathbb{N}\setminus\{0\}$ and let $\Omega\subseteq\mathbb{R}^2$ be an open multiply connected set of order $k$, with finite inradius. Let $\mathcal{Q}$ be an open square with side length $10\,(\lfloor\sqrt{k}\rfloor+1)\,r_\Omega$, whose
sides are parallel to the coordinate axes.	
Then there exists a compact set $\Sigma\subseteq\overline{\mathcal{Q}}\setminus\Omega$ such that 
\begin{equation}
\label{fatprojection}
\max\Big\{\mathcal{H}^1(\Pi_{\mathbf{e}_1}(\Sigma)),\, \mathcal{H}^1(\Pi_{\mathbf{e}_2}(\Sigma))\Big\}\ge \frac{\sqrt{k}}{4}\,r_\Omega.
\end{equation}
\end{lemma}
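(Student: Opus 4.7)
\emph{Proof plan.}
The plan is to argue by contradiction, supposing both
\[
\mathcal{H}^1(\Pi_{\mathbf{e}_1}(\overline{\mathcal{Q}}\setminus\Omega))<\frac{\sqrt{k}}{4}\,r_\Omega \qquad\text{and}\qquad \mathcal{H}^1(\Pi_{\mathbf{e}_2}(\overline{\mathcal{Q}}\setminus\Omega))<\frac{\sqrt{k}}{4}\,r_\Omega.
\]
Set $M:=\lfloor\sqrt{k}\rfloor+1$, so the side of $\mathcal{Q}$ equals $L=10Mr_\Omega$. First, tile $\mathcal{Q}$ into a regular grid of $(5M)\times (5M)$ closed subcells of side $2r_\Omega$. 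By the inradius condition applied to the center of each subcell (which has sup-norm distance $r_\Omega$ from the subcell's boundary), every subcell contains at least one point of $\mathbb{R}^2\setminus\Omega$.

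Next, introduce the open sets
\[
A_x:=\bigl\{x\in(x_\min,x_\max)\,:\,\{x\}\times[y_\min,y_\max]\subseteq\Omega\bigr\},\qquad A_y \text{ analogously},
\]
whose complements $B_x,B_y$ in the respective coordinate ranges of $\mathcal{Q}$ coincide precisely with the two orthogonal projections of $\overline{\mathcal{Q}}\setminus\Omega$. The contradiction hypothesis then reads $|B_x|,|B_y|<\sqrt{k}\,r_\Omega/4$. Because every subcell contains a complement point, no connected component of $A_x$ can have length $\geq 4r_\Omega$, otherwise it would contain a full subcell column and force some subcell to miss $\Omega^c$. Hence $A_x$ decomposes into at least $\gtrsim 2\sqrt{k}$ disjoint open intervals, each yielding a vertical strip contained in $\Omega$ spanning the full height of $\mathcal{Q}$; analogous horizontal strips arise from $A_y$. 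Their union $\mathcal{G}$ is a connected subset of $\Omega$ (each vertical strip crosses each horizontal strip), and $\mathcal{Q}\setminus\mathcal{G}$ decomposes as a finite disjoint union of rectangles $R_{\alpha,\beta}=X_\alpha\times Y_\beta$ indexed by the connected components of $B_x,B_y$.

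The topological heart of the argument is the following confinement. Call a rectangle $R_{\alpha,\beta}$ \emph{interior} when its topological boundary is contained in $\mathcal{G}\subseteq\Omega$. Any connected subset of $\Omega^c$ meeting an interior rectangle is trapped there, since it cannot cross the $\Omega$-fence $\partial R_{\alpha,\beta}$. Therefore each of the $k-1$ bounded components of $\Omega^c$ in $(\mathbb{R}^2)^*$ lies inside at most one interior rectangle, and the unbounded component $E_\infty$ avoids every interior rectangle altogether: were a point of $E_\infty$ inside an interior rectangle, the $\mathbb{R}^2$-connected component of $E_\infty\cap\mathbb{R}^2$ containing it would be bounded and separated from $\infty$ in $(\mathbb{R}^2)^*$, hence form its own $(\mathbb{R}^2)^*$-component, contradicting membership in the component containing $\infty$. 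Consequently at most $k-1$ interior rectangles can carry any point of $\Omega^c$.

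The main obstacle is then to close the contradiction by showing that strictly more than $k-1$ interior rectangles must actually be active. The plan is to combine three ingredients: $(a)$ the lower bound $(n_1-1)(n_2-1)\gtrsim 4k$ on the number of interior rectangles; $(b)$ the inradius constraint $\min(|X_\alpha|,|Y_\beta|)\leq 2r_\Omega$ satisfied by every interior rectangle entirely contained in $\Omega$ (apply the inradius definition to the largest inscribed disk in $R_{\alpha,\beta}$), which together with $|B_x|\cdot|B_y|<k\,r_\Omega^2/16$ forces $p_bq_b\leq k-1$ for the numbers $p_b,q_b$ of "big" ($>2r_\Omega$) components of $B_x,B_y$; and $(c)$ a careful double-counting of the $25M^2$ subcell complement points, exploiting that points in subcells far from $\partial\mathcal{Q}$ must land inside \emph{interior} (not edge) components of $B_x$ and $B_y$. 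A delicate pigeonhole combining these ingredients should force the number of active interior rectangles above $k-1$, delivering the contradiction. This last combinatorial-geometric accounting is the most technical point and is where I expect the proof to require the most care.
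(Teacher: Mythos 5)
Your set-up is essentially sound up to the confinement step: the tiling into closed cells of side $2r_\Omega$ (note that each such cell contains a point of $\mathbb{R}^2\setminus\Omega$ by a compactness argument, not directly by applying the inradius to the open inscribed disk, which may well lie in $\Omega$), the strips built from $A_x,A_y$, the fence argument showing that a connected subset of $\mathbb{R}^2\setminus\Omega$ meeting an ``interior'' rectangle is trapped there, and the conclusion that at most $k-1$ interior rectangles can meet $\mathbb{R}^2\setminus\Omega$ (be ``active''); a minor caveat is that $B_x,B_y$ may have uncountably many components, so the decomposition into rectangles need not be finite, though this does not harm the confinement or the bound on active rectangles. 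The genuine gap is the closing step: you never prove that more than $k-1$ interior rectangles must be active, and the ingredients $(a)$--$(c)$ do not assemble into this in any routine way. In particular, the naive double counting of the $25M^2$ cell points against the budgets $|B_x|,|B_y|<\sqrt{k}\,r_\Omega/4$ and the cap $k-1$ fails: one interior component $X_\alpha$ of width $\sim\sqrt{k}\,r_\Omega/4$ paired with up to $k-1$ thin components $Y_\beta$ gives at most $k-1$ active rectangles whose total cell-incidence capacity is of order $k^{3/2}$, far exceeding the $\sim 25k$ cells, so no contradiction arises from counting incidences alone. What a correct closing argument must exploit is the covering structure you only hint at in $(c)$: every central column of cells must have all of its central rows served by active pairs whose $X$-component meets that column; since each $Y_\beta$ meets at most $|Y_\beta|/(2r_\Omega)+2$ rows and $\sum_\beta|Y_\beta|<\sqrt{k}\,r_\Omega/4$, each such column needs $\gtrsim\sqrt{k}$ distinct active pairs, and since the width budget lets only $O(\sqrt{k})$ columns be met by ``wide'' components, summing over the remaining $\sim\sqrt{k}$ columns forces $\gtrsim k$ active pairs, contradicting the bound $k-1$. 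This is precisely the ``delicate pigeonhole'' you defer; as written, the proposal is a plausible plan whose crux is missing.

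It is also worth noting that this is a genuinely different route from the paper, which argues directly rather than by contradiction: the paper places $4(\lfloor\sqrt{k}\rfloor+1)^2$ cells of side $5r_\Omega$ inside $\mathcal{Q}$, picks in each a point of $\mathbb{R}^2\setminus\Omega$ lying in the concentric disk of radius $\tfrac{3}{2}r_\Omega$, and calls a cell reliable if that point can be joined to the cell's boundary by a continuum contained in the complement; at most $k$ cells are unreliable (each traps an entire component of $(\mathbb{R}^2)^*\setminus\Omega$), each reliable cell contributes a continuum one of whose coordinate projections has length at least $r_\Omega$, and a simple row/column count controls overlaps, giving $\Sigma$ explicitly as the union of these continua. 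That construction is short and elementary, whereas your approach, even if completable along the lines sketched above, hinges on a nontrivial combinatorial covering lemma that you have not supplied.
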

\begin{proof}
Let us set $\delta=\lfloor\sqrt{k}\rfloor+1$, for notational simplicity.
	By dilating and traslating, there is no loss of generality in assuming $r_\Omega=1$ and
	\[
	\mathcal{Q}=Q_{5\,\delta}(0)=\left(-5\,\delta,5\,\delta\right)\times \left(-5\,\delta,5\,\delta\right).
	\]
We can suppose that $\mathcal{Q}\cap \Omega\not=\emptyset$, otherwise the proof is trivial: it would be sufficient to take $\Sigma=\overline{\mathcal{Q}}$ to get the desired conclusion.
\par	
	We then fix the following set of $4\,\delta^2$ centers
	\[
	P_{j,m}=\left(-5\,\delta +\frac{5}{2}+5\,j,\, 5\,\delta-\frac{5}{2}-5\,m\right),\qquad \mbox{ for }  j,m=0,\ldots,2\,\delta-1,
	\] 
	and take accordingly the two family of squares and disks, given by
	\[
	B_\frac{3}{2}(P_{j,m})\subset Q_\frac{5}{2}(P_{j,m}),\qquad \mbox{ for } j,m=0,\ldots,2\,\delta-1.
	\]
	\begin{figure}
	\includegraphics[scale=.3]{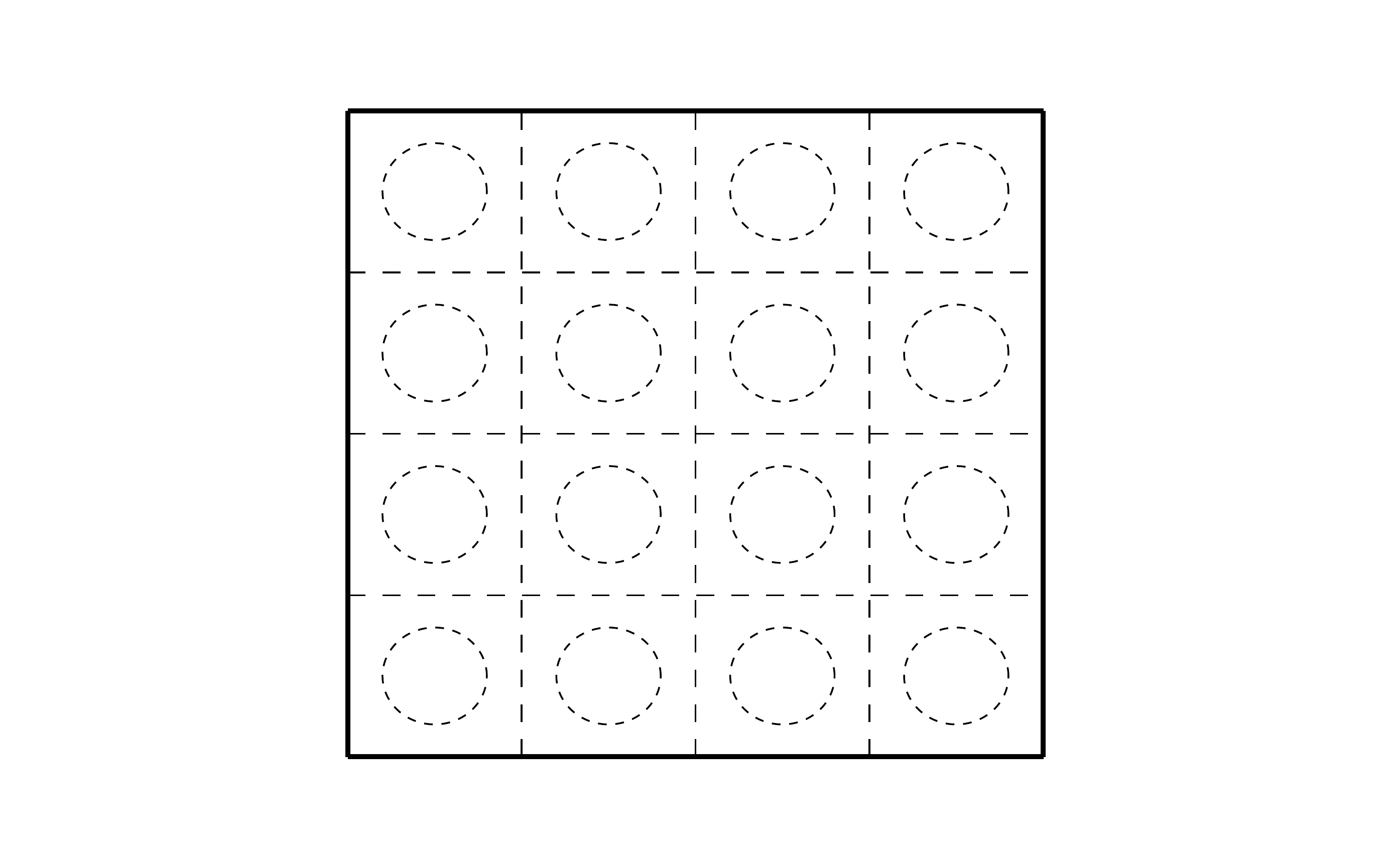}
	\caption{The construction of disks and squares in the proof of Lemma \ref{lm:rettangolo distante}, for the cases $k=1$, $k=2$ or $k=3$ (i.\,e. $\delta=2$). Each disk contains at least a point belonging to $\mathbb{R}^2\setminus \Omega$. The {\it reliable} squares are those for which such a point can be ``connected'' to the boundary of the ``cell'' containing it, with a continuum lying outside of $\Omega$.}
	\end{figure}
	\begin{figure}
\includegraphics[scale=.3]{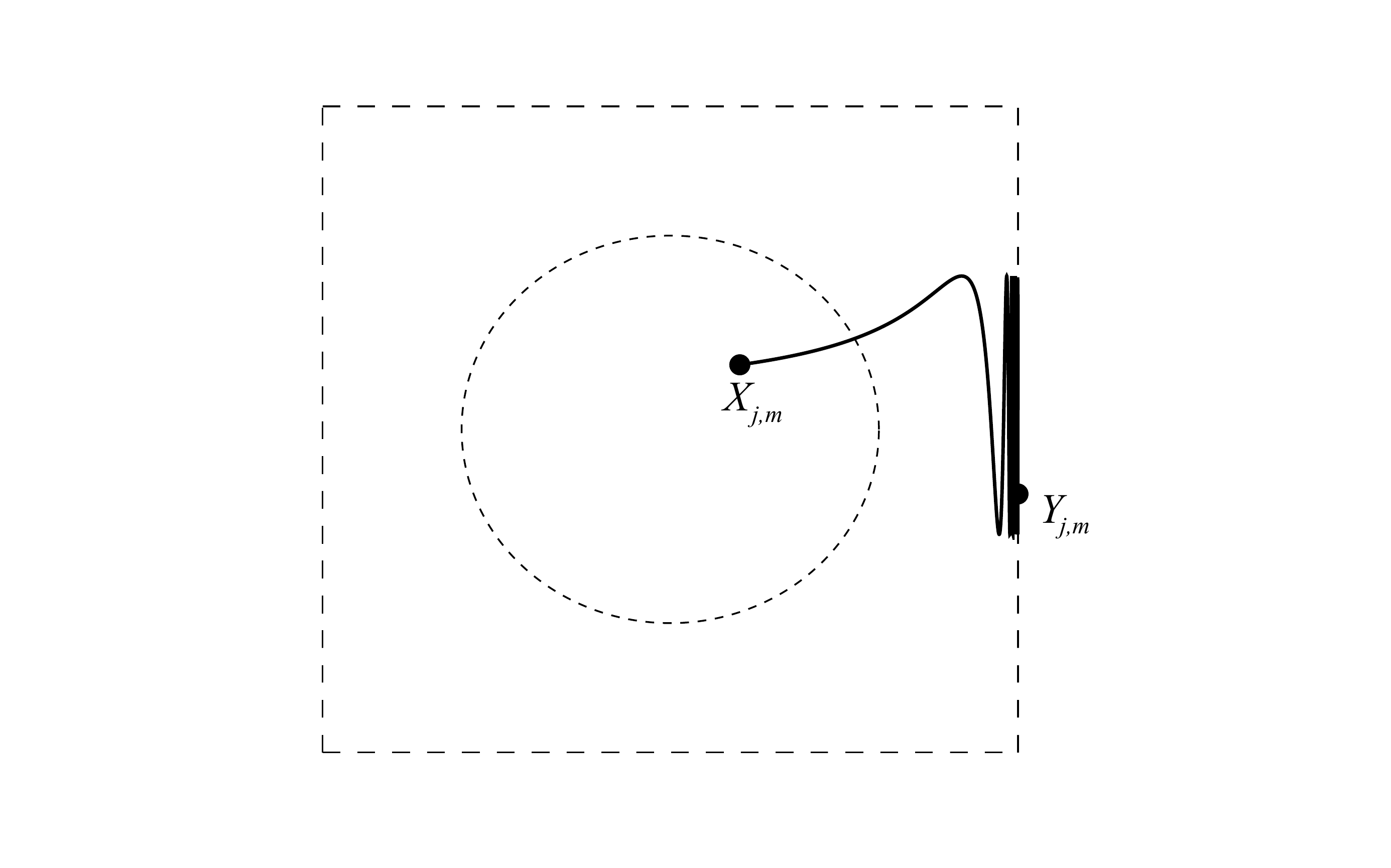}
\caption{A zooming on a reliable square $Q_{5/2}(P_{j,m})$. The bold line corresponds to a continuum which connects the point $X_{j,m}$ to the boundary of the ``cell'', lying outside of $\Omega$.}
\end{figure}
	We observe that by construction we have
\begin{equation}
\label{distanti}
\mathrm{dist}\left(B_\frac{3}{2}(P_{j,m}),\partial Q_\frac{5}{2}(P_{j,m})\right)=1,\qquad \mbox{ for every } j,m=0,\dots,2\,\delta-1
\end{equation}
Since $r_\Omega=1$, our open set $\Omega$ can not entirely contain an open disk with radius larger than $1$. Thus,
we have that each disk $B_{3/2}(P_{j,m})$ must intersect the complement $\mathbb{R}^2\setminus \Omega$. Let us select a point $X_{j,m}\in B_{3/2}(P_{j,m})\setminus\Omega$. We will say that a square $Q_{5/2}(P_{j,m})$ is:
\begin{itemize}
\item {\it unreliable} if for every continuum $K\subset \overline{Q_\frac{5}{2}(P_{j,m})}\setminus\Omega$ such that $X_{j,m}\in K$, we have 
\[
K\cap \partial Q_\frac{5}{2}(P_{j,m})=\emptyset;
\]
\vskip.2cm
\item {\it reliable} if there exists a continuum $K_{j,m}\subset \overline{Q_\frac{5}{2}(P_{j,m})}\setminus\Omega$ such that $X_{j,m}\in K_{j,m}$ and
\[
K_{j,m}\cap \partial Q_\frac{5}{2}(P_{j,m})\not=\emptyset.
\]
\end{itemize}		
We observe that every unreliable square must contain at least a connected component of $(\mathbb{R}^2)^*\setminus \Omega$. Thus, by definition of multiply connected set of order $k$, the unreliable squares can be at most $k$. Thus, if we set
\[
\mathcal{N}=\left\{(j,m)\, :\, Q_\frac{5}{2}(P_{j,m}) \mbox{ is reliable}\right\},
\]
we get\footnote{We denote by $\#$ the cardinality of a discrete set.} 
\[
\# \mathcal{N}\ge 4\,\delta^2-k=4\,\Big(\lfloor\sqrt{k}\rfloor+1\Big)^2-k\ge 3\,\Big(\lfloor\sqrt{k}\rfloor+1\Big)^2=3\,\delta^2. 
\]
That is, our square $\mathcal{Q}$ contains at least $3\,\delta^2$ reliable squares.
We want to work with these squares and their continua $K_{j,m}$ defined above. By construction, we have 
\[
K_{j,m}\subset \overline{\mathcal{Q}}\setminus\Omega.
\]
We are ready to construct the compact set $\Sigma$ of the statement: this is given by\footnote{We notice that this union is not necessarily a disjoint one.}
\[
\Sigma=\bigcup_{(j,m)\in\mathcal{N}}K_{j,m}.
\]
We need to show that its projections along the coordinate axes satisfy \eqref{fatprojection}. At this aim, we first observe that $K_{j,m}$ is a connected set, containing both the point $X_{j,m}\in B_{3/2}(P_{j,m})$ and a point $Y_{j,m}\in \partial Q_{5/2}(P_{j,m})$. 
By recalling \eqref{distanti}, we have that 
\[
|X_{j,m}-Y_{j,m}|\ge 1.
\]	
Moreover, we have that at least one of the two quantities
\[
|\Pi_{\mathbf{e}_1}(X_{j,m})-\Pi_{\mathbf{e}_1}(Y_{j,m})| \qquad \mbox{ or } \qquad |\Pi_{\mathbf{e}_2}(X_{j,m})-\Pi_{\mathbf{e}_2}(Y_{j,m})|,
\]
is larger than or equal to $1$ (recall that all the squares involved have sides parallel to the coordinate axes). By using this fact, together with the fact that both projections
\[
\Pi_{\mathbf{e}_i}\left(K_{j,m}\right),\qquad \mbox{ for } i=1,2,
\]
coincide with a segment containing both $\Pi_{\mathbf{e}_i}(X_{j,m})$ and $\Pi_{\mathbf{e}_i}(Y_{j,m})$, we can finally assure that at least one of the two projections of $K_{j,m}$ has a length larger than or equal to $1$.
In order to conclude, we need to take care of the possible overlaps in these projections. Let us denote by $J_1,J_2\in\mathbb{N}$ the following numbers
\[
J_i=\# \Big\{K_{j,m}\, :\, \mathcal{H}^1(\Pi_{\mathbf{e}_i}(K_{j,m}))\ge 1\Big\},\qquad \mbox{ for } i=1,2.
\]
According to the previous discussion, we have 
\[
J_1+J_2\ge 3\,\delta^2\qquad \mbox{ and thus in particular }\qquad \max\{J_1,J_2\}\ge \delta^2.
\]
Without loss of generality, we can suppose that $J_1\ge J_2$. This implies that there are at least $\delta^2$ ``good" projections, i.e. projections with length at least $1$, on the second coordinate axis. We need to estimate the number of such projections, modulo overlaps:
observe that for every fixed $m\in \{0,\dots,2\,\delta-1\}$, the array of squares
\[
\overline{Q_{0,m}(P_{0,m})},\dots,\overline{Q_{2\,\delta-1,m}(P_{2\,\delta-1,m})}
\]
all have the same projection. Thus the number of distinct projections is at least
\[
\frac{\delta^2}{2\,\delta}=\frac{\delta}{2}.
\]
As a technical and annoying fact, we record that this could fail to be a natural number.
However, if we set
\[
\Lambda_k=\left\{\begin{array}{cc}
1,& \mbox{ for }k\in\{1,2,3\},\\
&\\
\dfrac{\lfloor\sqrt{k}\rfloor}{2},& \mbox{ for } k\ge 4 \mbox{ such that $\lfloor\sqrt{k}\rfloor$ is even},\\
&\\
\dfrac{\lfloor\sqrt{k}\rfloor-1}{2},& \mbox{ for } k\ge 4 \mbox{ such that $\lfloor\sqrt{k}\rfloor$ is odd},
\end{array}
\right.
\]
we have 
\[
\frac{\delta}{2}\ge \Lambda_k.
\]
Thus we have at least $\Lambda_k$ projections on the first coordinate axis, each having length at least $1$. This in turn yields
\[
\mathcal{H}^1(\Pi_{\mathbf{e}_1}(\Sigma))\ge \Lambda_k.
\] 
Finally, by observing that $\Lambda_k\ge \sqrt{k}/4$, we get the claimed conclusion. 
\end{proof}

\subsection{Functional spaces}

We need some definitions from the theory of fractional Sobolev spaces. We refer the reader to \cite{DPV, EE} for a brief introduction to these spaces, as well as for further references.
\par
Let $0<s<1$ and $1<p<\infty$, for a measurable set $E\subseteq\mathbb{R}^N$ we recall the definition of Sobolev-Slobodecki\u{\i} seminorm
\[
[u]_{W^{s,p}(E)}:=\left(\iint_{E\times E}\frac{|u(x)-u(y)|^p}{|x-y|^{N+s\,p}}\,dx\,dy\right)^\frac{1}{p},\qquad \mbox{ for } u\in L^1_{\rm loc}(E).
\]
Accordingly, we consider 
\[
W^{s,p}(E)=\Big\{u\in L^p(E)\, :\, [u]_{W^{s,p}(E)}<+\infty\Big\},
\]
endowed with the norm
\[
\|u\|_{W^{s,p}(E)}=\|u\|_{L^p(E)}+[u]_{W^{s,p}(E)},\qquad \mbox{ for every } u\in W^{s,p}(E).
\]
Occasionally, we will need these definitions for $p=\infty$. For $0<s<1$, we set
\[
W^{s,\infty}(E)=\Big\{u\in L^\infty(E)\, :\, [u]_{W^{s,\infty}(E)}<+\infty\Big\},
\]
where 
\[
[u]_{W^{s,\infty}(E)}:=\sup_{x,y\in E, x\not=y} \frac{|u(x)-u(y)|}{|x-y|^s}
\]
When $E\subseteq\mathbb{R}^N$ is an open set, we will also consider the classical Sobolev space
\[
W^{1,p}(E)=\Big\{u\in L^p(E)\, :\, [u]_{W^{1,p}(E)}<+\infty\Big\},
\]
where we used the symbol
\[
[u]_{W^{1,p}(E)}:=\|\nabla u\|_{L^p(E)},\qquad \mbox{ for every } u\in W^{1,p}(E).
\]
The space $W^{1,p}(E)$ will be endowed with the norm
\[
\|u\|_{W^{1,p}(E)}=\|u\|_{L^p(E)}+[u]_{W^{1,p}(E)},\qquad \mbox{ for every } u\in W^{1,p}(E).
\]
In the case $p=\infty$, the definition of this space does not need any further precision. Finally, for $0<s\le 1$ and $1<p\le\infty$, the symbol $\widetilde W^{s,p}_0(\Omega)$ will denote the closure of $C^\infty_0(\Omega)$ in the space $W^{s,p}(\mathbb{R}^N)$. 
By $W^{s,p}_{\rm loc}(\mathbb{R}^N)$ we mean the collection of functions which are in $W^{s,p}(B_R)$, for every $R>0$.

\section{Some facts from the theory of fractional Sobolev spaces}
\label{sec:3}

Unless otherwise stated, all the results of this section are valid in every dimension $N\ge 1$.
\vskip.2cm\noindent
We start with the following generalization of \cite[Lemma 2.2]{BB}. The main focus is on the precise form of the estimates.

\begin{prop}\label{lm:extension}
	Let $r>0$ and $x_0\in\mathbb{R}^N$, there exists a linear extension operator
	\[
	\mathcal{E}_r:L^1(B_r(x_0))\to L^1_{\rm loc}(\mathbb{R}^N),
	\]
	with the following property: 
	\par
	for $0<s\le 1$ and $1< p\le\infty$ it maps $W^{s,p}(B_r(x_0))$ to $W^{s,p}_{\rm loc}(\mathbb{R}^N)$. Moreover, for every $u\in W^{s,p}(B_r(x_0))$ and every $R>r$ we have\footnote{In the case $p=\infty$, we use the convention $1/\infty=0$.}
	\begin{equation}
		\label{continuo}
		\Big[\mathcal{E}_r[u]\Big]_{W^{s,p}(B_R(x_0))}\le 4^\frac{1}{p}\,\left(\frac{R}{r}\right)^\frac{4\,N}{p}\,[u]_{W^{s,p}(B_r(x_0))},
	\end{equation}
	and
	\begin{equation}
		\label{continuo2}
		\Big\|\mathcal{E}_r[u]\Big\|_{L^p(B_R(x_0))}\le 2^\frac{1}{p}\,\left(\frac{R}{r}\right)^\frac{2N}{p}\,\|u\|_{L^p(B_r(x_0))}.
	\end{equation}
\end{prop}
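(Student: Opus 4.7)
The plan is to take $\mathcal{E}_r$ to be the extension by spherical inversion across $\partial B_r(x_0)$. By translation I may assume $x_0 = 0$, and I let $T(x) = r^2 x/|x|^2$ denote the Kelvin transform on $\mathbb{R}^N \setminus \{0\}$. Define
\[
\mathcal{E}_r[u](x) := u(x) \quad \text{for } x \in B_r, \qquad \mathcal{E}_r[u](x) := u(T(x)) \quad \text{for } x \in \mathbb{R}^N \setminus \overline{B_r}.
\]
The map $T$ is a smooth involution sending $B_r \setminus \{0\}$ onto $\mathbb{R}^N \setminus \overline{B_r}$ and equal to the identity on $\partial B_r$, so $\mathcal{E}_r$ is linear and $\mathcal{E}_r[u]$ is continuous across the sphere. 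Two well-known identities for $T$ do all the analytic work: the Jacobian formula $|JT(x)| = (r/|x|)^{2N}$ (conformality) and the distance formula $|T(x) - T(y)| = r^2|x-y|/(|x|\,|y|)$.

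For the $L^p$ estimate \eqref{continuo2}, I split $B_R$ into $B_r$ and the annulus $A := B_R \setminus \overline{B_r}$; on $A$, the substitution $\tilde x = T(x)$ lands in $A_r := B_r \setminus \overline{B_{r^2/R}}$ with Jacobian $(r/|\tilde x|)^{2N} \le (R/r)^{2N}$. Summing the two contributions yields \eqref{continuo2} at once. For the seminorm \eqref{continuo}, I split $B_R \times B_R$ into four pieces depending on whether each argument lies in $B_r$ or $A$. The $B_r \times B_r$ piece contributes exactly $[u]_{W^{s,p}(B_r)}^p$. On $A \times A$, the joint substitution $(\tilde x, \tilde y) = (T(x), T(y))$ together with the Kelvin distance formula converts the integral into the standard Sobolev integrand on $A_r \times A_r$ against a net weight $(|\tilde x|/r)^{sp-N}(|\tilde y|/r)^{sp-N}$, which is bounded on $A_r$ by a power of $R/r$ no larger than $(R/r)^{2N}$. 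For the two mixed pieces, the key geometric input is the inequality
\[
|x - T(y)| \le |x - y|, \qquad x \in B_r, \ y \in \mathbb{R}^N \setminus B_r,
\]
which follows from a short direct computation exploiting $|x| \le r \le |y|$; using it to upper-bound the integrand from below, and then performing the single substitution $\tilde y = T(y)$, reduces each mixed piece to at most $(R/r)^{2N}\,[u]_{W^{s,p}(B_r)}^p$. Summing the four pieces yields \eqref{continuo}.

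The endpoint $p = \infty$ needs no new ingredient beyond replacing $L^p$ averages by essential suprema: since $T$ is non-expansive on $\mathbb{R}^N \setminus B_r$ and the cross-sphere assignment $y \mapsto T(y)$ in the mixed pieces is non-expansive as well, the $W^{s,\infty}$ seminorm is not amplified at all, matching $4^{1/\infty} = 1$ and $(R/r)^{4N/\infty} = 1$. For $s = 1$ one must verify in addition that $\mathcal{E}_r[u]$ has a global weak gradient; this follows from the continuity of the extension across $\partial B_r$ together with the bound $|DT(x)| = r^2/|x|^2 \le 1$ on $A$, which gives matching traces of the gradient from both sides. The main technical care, I expect, lies in the bookkeeping for the $A \times A$ piece: the distance-distortion in the Kelvin formula and the two Jacobians all depend explicitly on $|\tilde x|$ and $|\tilde y|$, and one has to check that these powers combine into a single harmless factor controlled by $(R/r)^{2N}$. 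This balance is precisely what the conformality of $T$ delivers, and is the reason the Kelvin transform is the natural extension to use here.
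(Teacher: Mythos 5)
Your construction is correct, and it is essentially the ``reflection through the sphere'' idea, but you carry it out in a genuinely different way from the paper. The paper does not redo the core estimate at all: it rescales to $r=1$, quotes \cite[Lemma 2.2]{BB} for $0<s<1$ and $p=2$ (asserting the same proof works for all $1<p\le\infty$), and then recovers the endpoint $s=1$ by a Bourgain--Brezis--Mironescu limiting argument in $s$, and $p=\infty$ by letting $p\to\infty$. You instead give one self-contained operator (inversion $T(x)=r^2x/|x|^2$) with direct estimates valid simultaneously for all $0<s\le 1$ and $1<p\le\infty$: the four-piece splitting of $B_R\times B_R$, the Jacobian bound $(r/|\tilde x|)^{2N}\le (R/r)^{2N}$ on $T(B_R\setminus \overline{B_r})$, the Kelvin distance identity for the $A\times A$ piece (where the net weight $(|\tilde x|/r)^{sp-N}(|\tilde y|/r)^{sp-N}\le (R/r)^{2(N-sp)_+}$), and the cross-sphere inequality $|x-T(y)|\le|x-y|$ for $|x|\le r\le|y|$ (which indeed reduces, after squaring, to $(t-1)^3(t+1)\ge 0$ with $t=|y|/r$) all check out, and they even give the slightly better exponent $(R/r)^{2N/p}$ in place of $(R/r)^{4N/p}$ in \eqref{continuo}; the $p=\infty$ and $s=1$, $p<\infty$ computations via $|DT|\le 1$ on the exterior are also fine. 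What your approach buys is the avoidance of both the external citation and the two limiting arguments, at the price of the explicit bookkeeping. One small imprecision to fix in the $s=1$, $p<\infty$ case: what must match across $\partial B_r$ are the \emph{traces of the function} from the two sides, not ``traces of the gradient'', and you cannot invoke continuity of $\mathcal{E}_r[u]$ for a general Sobolev $u$; the correct (standard) argument is that $T$ equals the identity on $\partial B_r$, so approximating $u$ by smooth functions in $W^{1,p}(B_r)$ and using continuity of the trace operator shows the interior and exterior traces coincide, whence the glued function has a weak gradient across the sphere. With that adjustment your proof is complete.
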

\begin{proof}
We first prove the result at scale $1$, i.e. when $r=1$. Then we will show how to get the general result, by an easy scaling argument.
\vskip.2cm\noindent
{\it Case $r=1$}. For $0<s<1$ and $p=2$, this is exactly \cite[Lemma 2.2]{BB}. We also observe that the very same proof applies to the case $1< p\le \infty$, thus we omit the straightforward modifications. 
\par
We now come to the case $s=1$ and $1< p<\infty$. We take $u\in W^{1,p}(B_1(x_0))$, thus by \cite[Proposition 3.1]{EE} we have $u\in W^{s,p}(B_1(x_0))$ for every $0<s<1$, as well. 
From the previous step, we know that 
\[
(1-s)^\frac{1}{p}\,\Big[\mathcal{E}_1[u]\Big]_{W^{s,p}(B_R(x_0))}\le 4^\frac{1}{p}\,\left(\frac{R}{r}\right)^\frac{4\,N}{p}\,(1-s)^\frac{1}{p}\,[u]_{W^{s,p}(B_r(x_0))}.
\]
By using \cite[Theorem 2]{BBM}, we get the desired result by taking the limit as $s$ goes to $1$, that is
\[
\Big[\mathcal{E}_1[u]\Big]_{W^{1,p}(B_R(x_0))}\le 4^\frac{1}{p}\,\left(\frac{R}{r}\right)^\frac{4\,N}{p}\,[u]_{W^{1,p}(B_r(x_0))}.
\]
Finally, the case $p=\infty$ can be obtained from the last formula in display, by taking the limit as $p$ goes to $\infty$.
\vskip.2cm\noindent
{\it Case $r\not =1$}. At first, we need a notation. For every $\tau>0$, we indicate by 
\[
\mathcal{T}_\tau(x)=\tau\,(x-x_0)+x_0,\qquad \mbox{ for every } x\in\mathbb{R}^N.
\]
Then the operator $\mathcal{E}_r$ can be simply defined as 
\[
\mathcal{E}_r[u]:=(\mathcal{E}_1[u\circ \mathcal{T}_r])\circ \mathcal{T}_\frac{1}{r}.
\]
In other words, given a function $u\in L^1(B_r(x_0))$, we first scale it to a function defined on $B_1(x_0)$, then extend it with $\mathcal{E}_1$ and finally scale back this extension. Observe that for $x\in B_r(x_0)$, we have
\[
\mathcal{E}_r[u](x)=\mathcal{E}_1[u\circ \mathcal{T}_r]\left(\frac{x-x_0}{r}+x_0\right)=u\left(\mathcal{T}_r\left(\frac{x-x_0}{r}+x_0\right)\right)=u(x).
\]
By using the scaling properties of the norms involved, it is easy to see that this operator has the desired properties.
\end{proof}
By combining Proposition \ref{lm:extension} with Lemma \ref{lemma: cambio di variabili bi-lipschitz} in Appendix \ref{app:A}, we can get a universal linear extension operator for any $K\subset \mathbb{R}^N$ open bounded convex set. The control on the relevant constants is quite precise and useful for our scopes. In what follows, for every  $x_0\in K$, we introduce the following geometric quantities 
\[
d_K(x_0)=\min_{x\in\partial K} |x-x_0|,\qquad D_K(x_0)=\max_{x\in \partial K} |x-x_0|.
\]

\begin{cor}
\label{cor:extensionK}
Let $K\subseteq\mathbb{R}^N$ be an open bounded convex set and $x_0\in K$, there exists a linear extension operator 
	\[
	\mathcal{E}_K:L^1(K)\to L^1_{\rm loc}(\mathbb{R}^N),
	\]
	with the following property: 
	\par
	for $0<s\le 1$ and $1< p\le\infty$ it maps $W^{s,p}(K)$ to $W^{s,p}_{\rm loc}(\mathbb{R}^N)$. Moreover, for every $u\in W^{s,p}(K)$ and every $R>1$ we have
		\begin{equation}
		\label{continuoK}
	\Big[\mathcal{E}_K(u)\Big]_{W^{s,p}(K_R(x_0))}\le \big(4\cdot 6^{3\,N+s\,p}\big)^\frac{1}{p}\,R^\frac{4\,N}{p}\,\left(\frac{D_K(x_0)}{d_K(x_0)}\right)^{\frac{6\,N}{p}+2\,s}\,[u]_{W^{s,p}(K)},
	\end{equation} 
	and
	\begin{equation}\label{continuo1K}
	\|\mathcal{E}_K(u)\|_{L^p(K_R(x_0))}\le \big(2\cdot 6^N)^\frac{1}{p}\,R^\frac{2\,N}{p}\,\left(\frac{D_K(x_0)}{d_K(x_0)}\right)^\frac{2\,N}{p}\,\|u\|_{L^p(K)},
	\end{equation}
	where 
	\[
	K_R(x_0):=R\,(K-x_0)+x_0=\Big\{R\,(x-x_0)+x_0\,:\,x\in K\Big\}.
	\]
\end{cor}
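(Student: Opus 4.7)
The plan is to reduce to the ball case already settled in Proposition \ref{lm:extension}, via a bi-Lipschitz change of variables between $K$ and a Euclidean ball whose distortion is controlled by $D_K(x_0)/d_K(x_0)$. The precise transformation rule for $W^{s,p}$-seminorms under such a change of variables is exactly what Lemma \ref{lemma: cambio di variabili bi-lipschitz} from the appendix is designed to supply.

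First, I would construct a radial bi-Lipschitz homeomorphism $\Phi:B_{d_K(x_0)}(x_0)\to K$, defined by rescaling along each half-line from $x_0$ through the Minkowski gauge of the convex body $K-x_0$. The inclusions $B_{d_K(x_0)}(x_0)\subseteq K\subseteq B_{D_K(x_0)}(x_0)$ yield bi-Lipschitz bounds for both $\Phi$ and $\Phi^{-1}$ that depend only on the ratio $D_K(x_0)/d_K(x_0)$. Extending the same radial prescription to the whole space gives a global bi-Lipschitz homeomorphism $\widetilde\Phi:\mathbb{R}^N\to\mathbb{R}^N$ with the property that $\widetilde\Phi\bigl(B_{R\,d_K(x_0)}(x_0)\bigr)=K_R(x_0)$ for every $R>1$, and with the same control on its bi-Lipschitz constants (both $K$ and its dilates $K_R(x_0)$ are star-shaped with respect to $x_0$, so this extension is natural).

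I would then define the operator by composition,
\[
\mathcal{E}_K[u]\;:=\;\mathcal{E}_{d_K(x_0)}\bigl[u\circ\Phi\bigr]\circ\widetilde\Phi^{-1},
\]
where $\mathcal{E}_{d_K(x_0)}$ is the ball extension operator from Proposition \ref{lm:extension}. Linearity is immediate, and since $\widetilde\Phi$ agrees with $\Phi$ on $B_{d_K(x_0)}(x_0)$, one gets $\mathcal{E}_K[u]=u$ on $K$. The bounds \eqref{continuoK} and \eqref{continuo1K} on the dilate $K_R(x_0)$ follow by sandwiching \eqref{continuo} and \eqref{continuo2}, applied on the corresponding ball $B_{R\,d_K(x_0)}(x_0)$, between two applications of Lemma \ref{lemma: cambio di variabili bi-lipschitz}.

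The main obstacle is purely a bookkeeping one: tracking how the factor $R^{4N/p}$ from Proposition \ref{lm:extension} and the various contributions to the $D_K(x_0)/d_K(x_0)$ exponent combine to the stated $6N/p+2s$ (resp.\ $2N/p$) in the seminorm (resp.\ $L^p$) bound. Roughly, each Jacobian change of variables contributes $N/p$ per integration variable (hence $2N/p$ in the $L^p$ estimate and $4N/p$ toward the seminorm estimate), the distortion of the singular kernel $|x-y|^{-(N+sp)}$ provides the exponent $2s$ together with an extra $2N/p$ coming from the ratio of the Lipschitz constants of $\widetilde\Phi$ and $\widetilde\Phi^{-1}$, and a final comparison between $K_R(x_0)$ and a ball of radius $R\,D_K(x_0)$ accounts for the residual. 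Beyond verifying that these exponents assemble precisely into those in the statement, no conceptual difficulty should arise: the hard analytic and geometric content already lives in Proposition \ref{lm:extension} and in the appendix lemma on bi-Lipschitz changes of variables.
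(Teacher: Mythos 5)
Your construction coincides with the paper's: there, $\mathcal{E}_K[u]:=\big(\mathcal{E}_1[u\circ \Phi^{-1}_{K,x_0}]\big)\circ \Phi_{K,x_0}$, where $\Phi_{K,x_0}$ is precisely the global radial Minkowski-gauge bi-Lipschitz homeomorphism of Lemma \ref{lemma: cambio di variabili bi-lipschitz} (which maps $K_R(x_0)$ onto $B_R(x_0)$), and the estimates \eqref{continuoK}--\eqref{continuo1K} are then deduced from Proposition \ref{lm:extension} combined with the Lipschitz and Jacobian bounds of that lemma, with the exponent bookkeeping left to the reader just as you indicate. Your only deviation is the harmless normalization of working at scale $d_K(x_0)$ instead of the unit ball, so this is essentially the same proof.
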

\begin{proof}
The operator $\mathcal{E}_K$ is constructed as follows: by indicating with $\Phi_{K,x_0}:\mathbb{R}^N\to\mathbb{R}^N$ the bi-Lipschitz homeomorphism of Lemma \ref{lemma: cambio di variabili bi-lipschitz}, for every $u\in L^1_{\rm loc}(K)$, we define
\[
\mathcal{E}_K[u]:=\Big(\mathcal{E}_1[u\circ \Phi^{-1}_{K,x_0}]\Big)\circ \Phi_{K,x_0},
\]
where $\mathcal{E}_1$ is the operator of Proposition \ref{lm:extension}. In other words, we transplant $u$ to the unit ball centered at $x_0$, then we extend this function to the whole $\mathbb{R}^N$ by means of $\mathcal{E}_1$ and finally compose the resulting function with $\Phi_{K,x_0}$. 
\par
By construction, it is clear that $\mathcal{E}_K$ is linear and such that 
\[
\mathcal{E}_K[u](x)=u(x),\qquad \mbox{ for } x\in K.
\]
The continuity estimates \eqref{continuoK} and \eqref{continuo1K} can now be proved from the corresponding estimates for $\mathcal{E}_1$, by using the properties of $\Phi_{K,x_0}$ and $\Phi_{K,x_0}^{-1}$, encoded by Lemma \ref{lemma: cambio di variabili bi-lipschitz}. We leave the details to the reader.
\end{proof}

In what follows, given a ball $B_r(x_0)\subseteq\mathbb{R}^N$, a point $x\in B_r(x_0)$ and a direction $\omega\in\mathbb{S}^{N-1}$, we set
\[
R_\omega(x)=\sup\Big\{\varrho\in\mathbb{R}\, :\, x+\varrho\,\omega\in B_r(x_0)\Big\},
\]
and 
\[
r_\omega(x)=\inf\Big\{\varrho\in\mathbb{R}\, :\, x+\varrho\,\omega\in B_r(x_0)\Big\},
\]
see Figure \ref{fig:decomposition}. The following result is interesting in itself.
\begin{figure}
\includegraphics[scale=.3]{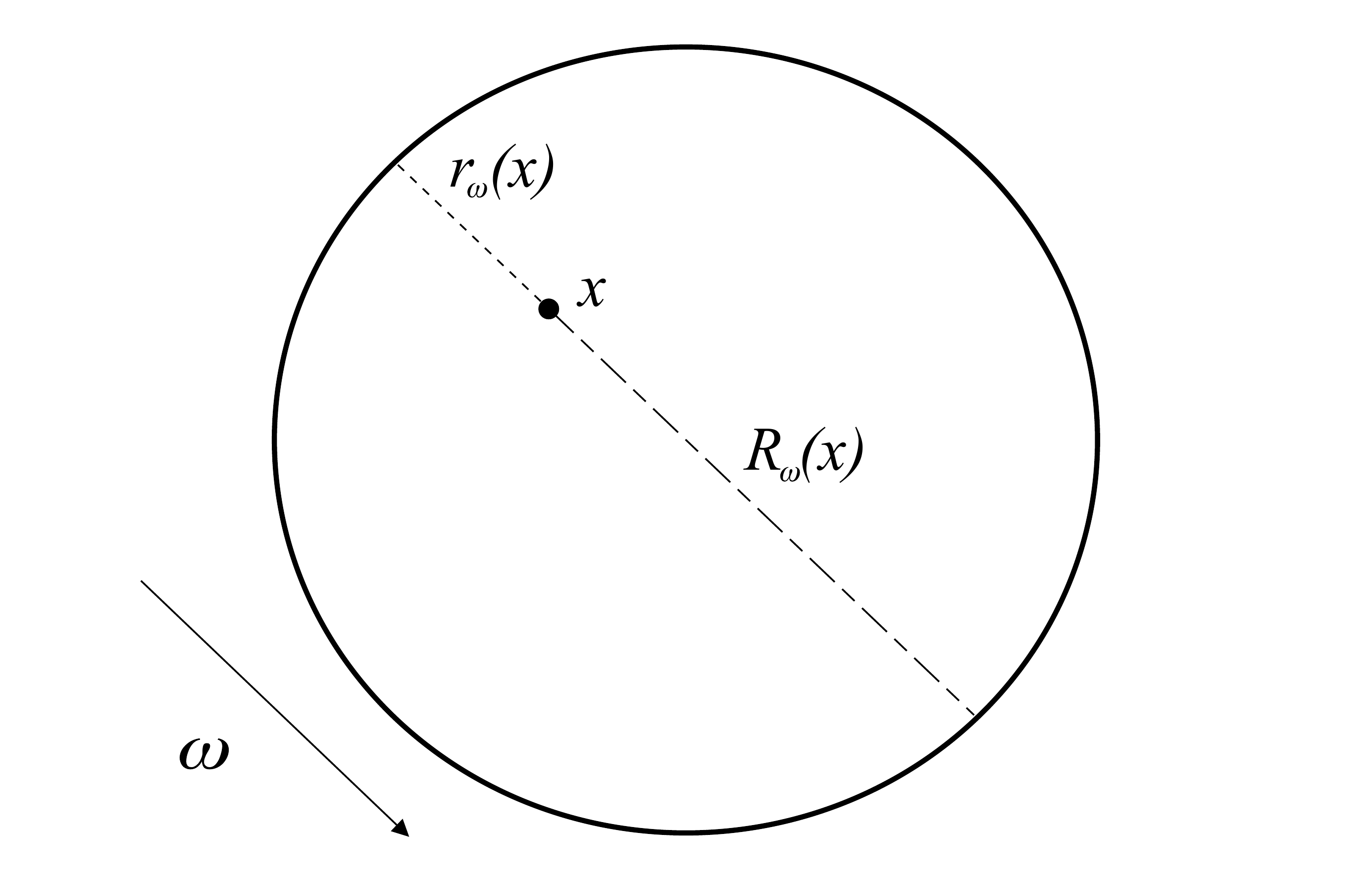}
\caption{The two quantities $R_\omega(x)$ and $r_\omega(x)$.}
\label{fig:decomposition}
\end{figure}
\begin{prop}[Directional fractional derivatives]\label{prop: derivata direzionale}
\label{prop:direzioni}
	Let $0<s<1$ and $r>0$, for every $u\in C^1(\overline{B_r(x_0)})$ and every $\omega\in\mathbb{S}^{N-1}$, we have 
	\begin{equation}
		\label{francesca}
		\int_{B_r(x_0)}\left(\int_{r_\omega(x)}^{R_\omega(x)} \frac{|u(x)-u(x+\varrho\,\omega)|^2}{|\varrho|^{1+2\,s}}\,d\varrho\right)\,dx\le \mathcal{A}\,[u]^2_{W^{s,2}(B_r(x_0))},
	\end{equation}
	for some $\mathcal{A}=\mathcal{A}(N)>0$.
\end{prop}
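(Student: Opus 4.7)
The plan is to reduce the estimate to the whole space $\mathbb{R}^N$, where the corresponding inequality is a direct consequence of Plancherel's identity together with the trivial bound $|\langle\xi,\omega\rangle|\le|\xi|$. Precisely, for $v\in W^{s,2}(\mathbb{R}^N)$, applying Plancherel in $x$ and using the one-dimensional identity $\int_\mathbb{R}|e^{i\varrho t}-1|^2|\varrho|^{-1-2s}\,d\varrho=C_s|t|^{2s}$ (with $C_s$ finite for $s\in(0,1)$), one obtains
\[
\int_{\mathbb{R}^N}\!\int_\mathbb{R}\frac{|v(x+\varrho\omega)-v(x)|^2}{|\varrho|^{1+2s}}\,d\varrho\,dx = C_s\int_{\mathbb{R}^N}|\langle\xi,\omega\rangle|^{2s}|\widehat v(\xi)|^2\,d\xi.
\]
An analogous computation for the full Gagliardo kernel (polar coordinates in $h=y-x$, then the same identity) yields $[v]^2_{W^{s,2}(\mathbb{R}^N)}=A_{N,s}\,C_s\int|\xi|^{2s}|\widehat v|^2\,d\xi$, where $A_{N,s}=\int_{\mathbb{R}^{N-1}}(1+|y|^2)^{-(N+2s)/2}\,dy$ is continuous and strictly positive on $[0,1]$. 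Combining with $|\langle\xi,\omega\rangle|\le|\xi|$ gives the $\mathbb{R}^N$ analogue of \eqref{francesca} with constant $1/A_{N,s}\le C(N)$, uniform in $s\in(0,1)$.

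To transplant this to $B_r(x_0)$, I first observe that \eqref{francesca} is invariant under adding a constant to $u$, so I assume $(u)_{B_r(x_0)}=0$. Let $\tilde u=\mathcal{E}_r[u]$ be the extension from Proposition~\ref{lm:extension}; on $B_{2r}(x_0)$ one has the bounds $[\tilde u]_{W^{s,2}(B_{2r}(x_0))}\le C(N)[u]_{W^{s,2}(B_r(x_0))}$ and $\|\tilde u\|_{L^2(B_{2r}(x_0))}\le C(N)\|u\|_{L^2(B_r(x_0))}$, with constants uniform in $s$. Multiplying by a smooth cutoff $\chi$ with $\chi=1$ on $B_r(x_0)$, $\mathrm{supp}(\chi)\subset B_{2r}(x_0)$ and $|\nabla\chi|\le C/r$, the resulting $v=\chi\tilde u\in C^1_c(\mathbb{R}^N)$ coincides with $u$ on $B_r(x_0)$, and a routine Leibniz-type estimate for the Gagliardo seminorm gives
\[
[v]^2_{W^{s,2}(\mathbb{R}^N)}\le C(N)\bigl([\tilde u]^2_{W^{s,2}(B_{2r}(x_0))}+r^{-2s}\|\tilde u\|^2_{L^2(B_{2r}(x_0))}\bigr).
\]
The $L^2$ contribution is absorbed by the elementary zero-mean Poincar\'e estimate, valid for every $0<s<1$:
\[
\|u\|^2_{L^2(B_r(x_0))}=\frac{1}{2|B_r(x_0)|}\iint_{B_r\times B_r}|u(x)-u(y)|^2\,dx\,dy\le \frac{(2r)^{N+2s}}{2|B_r(x_0)|}[u]^2_{W^{s,2}(B_r(x_0))}\le C(N)\,r^{2s}[u]^2_{W^{s,2}(B_r(x_0))},
\]
whose constant is uniform in $s\in(0,1)$. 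Altogether $[v]_{W^{s,2}(\mathbb{R}^N)}\le C(N)[u]_{W^{s,2}(B_r(x_0))}$; since $v=u$ on $B_r(x_0)$, the LHS of \eqref{francesca} is dominated by its $\mathbb{R}^N$ analogue for $v$, which by the first step yields the claim.

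The main technical obstacle is the Leibniz-type cutoff step, where keeping the constant uniform in $s\in(0,1)$ requires care. This is precisely where the \emph{real-interpolation} viewpoint alluded to in the introduction (and used in \cite[Appendix~B]{BCV}) is most natural: interpreting $W^{s,2}$ as the real-interpolation space $(L^2,W^{1,2})_{s,2}$ and the one-directional $W^{s,2}$ seminorm as the corresponding interpolation between $L^2$ and $\{u\,:\,\partial_\omega u\in L^2\}$, the trivial pointwise inequality $|\partial_\omega u|\le|\nabla u|$ interpolates directly into the desired estimate with constants depending only on $N$.
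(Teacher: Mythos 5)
Your first step is correct: the Plancherel computation does give the whole-space inequality (i.e.\ Corollary \ref{coro:francesca}) with constant $1/A_{N,s}\le C(N)$ uniform in $s$, and this is a clean direct route to that corollary. The gap is in the localization to the ball, which is the actual content of the proposition. The ``routine Leibniz-type estimate'' $[\chi\,\tilde u]^2_{W^{s,2}(\mathbb{R}^N)}\le C(N)\,\big([\tilde u]^2_{W^{s,2}(B_{2r})}+r^{-2s}\|\tilde u\|^2_{L^2(B_{2r})}\big)$ with $C(N)$ independent of $s$ is false: already for $\tilde u\equiv 1$ the left-hand side is $[\chi]^2_{W^{s,2}(\mathbb{R}^N)}\simeq r^{N-2s}/(s(1-s))$ (by scaling, since $(1-s)\,[\chi]^2_{W^{s,2}}$ and $s\,[\chi]^2_{W^{s,2}}$ have positive limits as $s\nearrow1$ and $s\searrow0$), while the right-hand side is $\simeq r^{N-2s}$. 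More to the point, when one runs your cutoff step honestly, the cross term and the tail term are controlled through $\sup_y\int|\chi(x)-\chi(y)|^2\,|x-y|^{-N-2s}\,dx$, which genuinely carries the factor $1/(s(1-s))$ (this is exactly \cite[Lemma 2.6]{BBZ}, which the paper uses in Proposition \ref{lm:poincare_cap}, where such a loss is harmless). Your zero-mean Poincar\'e inequality has constant $C(N)\,r^{2s}$ with no gain in $s$, so it cannot absorb this loss: your chain yields $\mathcal{A}\simeq C(N)/(s(1-s))$ rather than $\mathcal{A}(N)$. This is not cosmetic: the $s$-uniformity of $\mathcal{A}$ is what produces $\vartheta_s\sim 1/(1-s)$ as $s\nearrow1$ in Theorem \ref{teo:main}, hence the recovery of the Croke--Osserman--Taylor inequality in Theorem \ref{teo:optimal}. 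Even replacing your Poincar\'e step by the sharp Lemma \ref{lm:PW}, which gains the factor $(1-s)$, only repairs the $s\nearrow1$ end; the $1/s$ coming from the tail term survives as $s\searrow0$, while the statement requires a constant uniform on all of $(0,1)$.

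Your closing paragraph correctly identifies real interpolation as the cure, but that is precisely where the work lies, and it is essentially the paper's proof: no cutoff is ever applied to $u$ there. The paper first bounds $\int_0^{2r}\big(t^{-s}\mathcal{K}(t,u)\big)^2\,dt/t$ by the (extended) Gagliardo seminorm via mollification and Hardy's inequality, with $N$-only constants (estimate \eqref{1}), and then bounds the directional difference-quotient integral by the same $K$-functional integral using near-optimal competitors and the pointwise bound $|\partial_\omega \tilde v|\le|\nabla\tilde v|$, again with $N$-only constants (estimate \eqref{2}). The claim that $|\partial_\omega u|\le|\nabla u|$ ``interpolates directly'' with constants depending only on $N$ cannot be taken off the shelf: the standard identification $W^{s,2}=(L^2,W^{1,2})_{s,2}$ has equivalence constants that degenerate at the endpoints unless the seminorms are normalized by the factors $s(1-s)$, and establishing this uniformity on a ball, direction by direction, is exactly the content of \eqref{1}--\eqref{2}. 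As written, your argument proves \eqref{francesca} only with $\mathcal{A}$ of order $C(N)/(s(1-s))$, which is strictly weaker than the statement and insufficient for the paper's asymptotic analysis.
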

\begin{proof}
	Without loss of generality, we can assume that $x_0$ coincides with the origin. We use Proposition \ref{lm:extension} and estimate \eqref{continuo} with $R=4\,r$, so to get
	\begin{equation}
		\label{0}
		\begin{split}
			\iint_{B_r\times B_r} \frac{|u(x)-u(y)|^2}{|x-y|^{N+2\,s}}\,dx\,dy&\ge \frac{1}{C}\,\iint_{B_{4r}\times B_{4r}} \frac{|\mathcal{E}_r[u](x)-\mathcal{E}_r[u](y)|^2}{|x-y|^{N+2\,s}}\,dx\,dy\\
			&\ge \frac{1}{C}\,\int_{B_{r}}\left(\int_{B_{2r}(x)} \frac{|\mathcal{E}_r[u](x)-\mathcal{E}_r[u](y)|^2}{|x-y|^{N+2\,s}}\,dy\right)\,dx\\
			&=\frac{1}{C}\,\iint_{B_{r}\times B_{2r}} \frac{|\mathcal{E}_r[u](x)-\mathcal{E}_r [u](x+h)|^2}{|h|^{N+2\,s}}\,dx\,dh,
		\end{split}
	\end{equation}
	where $C$ only depends on the dimension $N$.
	In the last identity, we used the change of variable $y=x+h$. 
	\par
	From now on, we will write $\widetilde{u}$ in place of $\mathcal{E}_r[u]$, for notational simplicity. We then introduce the following {\it $K-$functional}
	\begin{equation}\label{eq: def K}
	\mathcal{K}(t,u)=\inf_{v\in W^{1,2}(B_r)} \Big[\|u-v\|_{L^2(B_{r})}+t\,[\nabla v]_{W^{1,2}(B_{r})}\Big],\qquad \mbox{ for } t\in[0,2r].
	\end{equation}
	We claim that the following two estimates hold: there exist two constants $A_1,A_2>0$ depending on the dimension $N$ only, such that
	\begin{equation}
		\label{1}
		\int_0^{2r} \left(\frac{\mathcal{K}(t,u)}{t^s}\right)^2\,\frac{dt}{t}\le A_1\, \iint_{B_r\times B_{2r}} \frac{|\widetilde{u}(x)-\widetilde{u}(x+h)|^2}{|h|^{N+2\,s}}\,dx\,dh,
	\end{equation}
	and 
	\begin{equation}
		\label{2}
		\int_{B_r}\left(\int_{-2r}^{2r} \frac{|\widetilde{u}(x)-\widetilde{u}(x+\varrho\,\omega)|^2}{|\varrho|^{1+2\,s}}\,d\varrho\right)\,dx\le A_2\, \int_0^{2r} \left(\frac{\mathcal{K}(t,u)}{t^s}\right)^2\,\frac{dt}{t},\quad \mbox{ for every }\omega\in \mathbb{S}^{N-1}.
	\end{equation}
	Observe that by joining \eqref{0}, \eqref{1} and \eqref{2}, we would get 
	\[
	\begin{split}
		\iint_{B_r\times B_r} \frac{|u(x)-u(y)|^2}{|x-y|^{N+2\,s}}\,dx\,dy&\ge \frac{1}{C\cdot A_1\cdot A_2}\, \int_{B_r}\left(\int_{-2r}^{2r} \frac{|\widetilde{u}(x)-\widetilde{u}(x+\varrho\,\omega)|^2}{|\varrho|^{1+2\,s}}\,d\varrho\right)\,dx,
	\end{split}
	\]
	and thus the desired conclusion \eqref{francesca} would follow, once observed that $R_\omega(x)\le 2\,r$ and $r_\omega(x)\ge -2\,r$, together with the fact that $\widetilde{u}=u$ on $B_r$. Thus we are left with establishing the validity of both \eqref{1}  and \eqref{2}.
	\vskip.2cm\noindent
	In order to prove \eqref{1}, we proceed exactly as in the proof of \cite[Proposition 4.5]{BS}, up to some necessary modifications.
	At first, it is useful to define
	\[
	U(h)=\left(\int_{B_r} |\widetilde{u}(x+h)-\widetilde{u}(x)|^2\,dx\right)^\frac{1}{2},\qquad h\in B_{2r}.
	\]
	Thus, by definition, the right-hand side of \eqref{1} can be rewritten as
	\[
	\iint_{B_r\times B_{2r}} \frac{|\widetilde{u}(x)-\widetilde{u}(x+h)|^2}{|h|^{N+2\,s}}\,dx\,dh=\int_{B_{2r}} \frac{U(h)^2}{|h|^{N+2\,s}}\,dh.
	\]
	We also define
	\[
	\overline{U}(\varrho)=\fint_{\partial B_\varrho} U\,d\mathcal{H}^{N-1},\qquad \mbox{ for }0<\varrho\le 2r.
	\]
	By Jensen's inequality we obtain
	\begin{equation}
		\label{stimajensen}
		\begin{split}
			\int_0^{2r} \overline U^2\,\frac{d\varrho}{\varrho^{1+2\,s}}&\le \frac{1}{N\,\omega_N}\,\int_{0}^{2r} \left(\int_{\partial B_\varrho} U^2\,d\mathcal{H}^{N-1}\right)\,\frac{d\varrho}{\varrho^{N+2\,s}}\\
			&= \frac{1}{N\,\omega_N}\,\int_{B_{2r}} \frac{U(h)^2}{|h|^{N+2\,s}}\,dh=\frac{1}{N\,\omega_N}\,\iint_{B_r\times B_{2r}} \frac{|\widetilde{u}(x)-\widetilde{u}(x+h)|^2}{|h|^{N+2\,s}}\,dx\,dh.
		\end{split}
	\end{equation}
	We now take the compactly supported Lipschitz function 
	\[
	\psi(x)=\frac{N+1}{\omega_N}\,(1-|x|)_+,
	\] 
	where $(\,\cdot\,)_+$ stands for the positive part.
	Observe that $\psi$ has unit $L^1$ norm, by construction. We then define the rescaled function
	\[
	\psi_t(x)=\frac{1}{t^N} \,\psi\left(\frac{x}{t}\right),\qquad \mbox{ for }0<t\le 2r,
	\]
	which is supported on $\overline{B_t}$.
	By observing that $\psi_t\ast \widetilde{u}\in W^{1,2}(B_r)$, from the definition of $\mathcal{K}(t,u)$ we have
	\[
	\mathcal{K}(t,u)\le \|u-\psi_t\ast \widetilde{u}\|_{L^2(B_{r})}+t\,[\nabla \psi_t\ast \widetilde{u}]_{W^{1,2}(B_{r})}.
	\]
	We estimate the two norms in the right-hand side separately: for the first one, by Minkowski's inequality and Fubini's Theorem we obtain
	\[
	\begin{split}
		\|u-\psi_t\ast \widetilde{u}\|_{L^2(B_{r})}&=\left\|\int_{B_t} [\widetilde{u}(\cdot)-\widetilde{u}(\cdot-y)]\,\psi_t(y)\,dy\right\|_{L^2(B_{r})}\\
		&\le \int_{B_t}\left(\int_{B_{r}} |\widetilde{u}(x)-\widetilde{u}(x-y)|^2\,dx\right)^\frac{1}{2}\,\psi_t(y)\,dy\\
		&=\int_{B_t} U(-y)\,\psi_{t}(y)\,dy\le \frac{N+1}{\omega_N\,t^N}\,\int_{B_t} U(-y)\,dy\\
		&=\frac{N\,(N+1)}{t^N}\,\int_0^t \overline{U}\,\varrho^{N-1}\,d\varrho\le\frac{N\,(N+1)}{t}\,\int_0^t \overline{U}\,d\varrho.
	\end{split}
	\]
	In the first identity we used that $\widetilde{u}=u$ in $B_r$, in the last inequality we used that $\varrho^{N-1}\le t^{N-1}$.
	For the second norm, we first observe that the Divergence Theorem gives
	\[
	\int_{B_t} \nabla \psi_t(y)\,dy=0,
	\]
	thus we can write
	\[
	\nabla \psi_t\ast \widetilde{u}=(\nabla \psi_t)\ast \widetilde{u}=\int_{B_t} \nabla \psi_t(y)\,[\widetilde{u}(x-y)-\widetilde{u}(x)]\,dy.
	\]
	Thus, again Minkowski's inequality yields
	\[
	\begin{split}
		[\nabla \psi_t\ast \widetilde{u}]_{W^{1,2}(B_r)}&=\left\|\int_{B_t} \nabla \psi_t(y)\,[\widetilde{u}(\cdot-y)-\widetilde{u}(\cdot)]\,dy\right\|_{L^2(B_r)}\\
		&\le \int_{B_t}\left( \int_{B_r} |\widetilde{u}(x-y)-\widetilde{u}(x)|^2\,dx\right)^\frac{1}{2}\,|\nabla \psi_t(y)|\,dy\\
		&\le \frac{N+1}{\omega_N\,t^{N+1}} \,\int_{B_t} U(-y)\,dy\le \frac{N\,(N+1)}{t^2}\,\int_0^t \overline{U}\,d\varrho.
	\end{split}
	\]
	In conclusion, we have obtained
	\begin{equation}
		\label{stimaK}
		\mathcal{K}(t,u)\le \frac{2\,N\,(N+1)}{t}\,\int_0^t \overline{U}\,d\varrho,\qquad \mbox{ for every } 0<t\le 2\,r.
	\end{equation}
	By raising to the power $2$, dividing by $t^{2\,s+1}$ and integrating, the previous estimate yields
	\[
	\int_0^{2r} \left(\frac{\mathcal{K}(t,u)}{t^s}\right)^2\,\frac{dt}{t}\le \Big(2\,N\,(N+1)\Big)^2\,\int_0^{2r} \left(\frac{1}{t}\,\int_0^t \overline{U}\,d\varrho\right)^2\,\frac{dt}{t^{1+2\,s}}.
	\]
	If we now use the one-dimensional Hardy inequality (see \cite[Teorema 1]{GTa}) for the function $t\mapsto \int_0^t \overline{U}\,d\varrho$,
	we get
	\[
	\begin{split}
		\int_0^{2r} \left(\frac{\mathcal{K}(t,u)}{t^s}\right)^2\,\frac{dt}{t}&\le \left(\frac{2\,N\,(N+1)}{s+1}\right)^2\,\int_0^{2\,r} \overline{U}^2\frac{dt}{t^{1+2\,s}}\\
		&\le  \frac{4\,N\,(N+1)^2}{\omega_N}\,\iint_{B_r\times B_{2r}} \frac{|\widetilde{u}(x)-\widetilde{u}(x+h)|^2}{|h|^{N+2\,s}}\,dx\,dh,
	\end{split}
	\]
	where we used \eqref{stimajensen} in the second inequality. This proves \eqref{1}, as desired.
	\vskip.2cm\noindent
	The proof of \eqref{2} is similar to that of \cite[Proposition B.1]{BCV}, but some technical modifications are needed, here as well. We take $0<|\varrho|\le r$, by definition of the $K-$functional there exists $v_{\varrho}\in W^{1,2}(\overline{B_r})$ such that
	\begin{equation}
		\label{dai}
		\|u-v_{\varrho}\|_{L^2(B_{r})}+\frac{|\varrho|}{2}\,\|\nabla v_{\varrho}\|_{L^2(B_{r})}\le 2\,\mathcal{K}\left(\frac{|\varrho|}{2},u\right).
	\end{equation}
	For notational simplicity, we simply write $v$ in place of $v_{\varrho}$. We also denote by $\widetilde{v}$ the extension of $v$ given by $\mathcal{E}_r[v]$.
	For $\omega\in\mathbb{S}^{N-1}$ and $|\varrho|\le 2r$, we get\footnote{In the second inequality, we use that for every $\omega\in\mathbb{S}^{N-1}$ and every $|\varrho|\le 2r$, we have
		\[
		\left(\int_{B_r} |\widetilde{v}(x+\varrho\,\omega)-\widetilde{v}(x)|^2\,dx\right)^\frac{1}{2}\le |\varrho|\,\left(\int_{B_{3r}} |\partial_\omega \widetilde{v}|^2\,dx\right)^\frac{1}{2}.
		\]
	}
	\[
	\begin{split}
		\left(\int_{B_r} |\widetilde{u}(x+\varrho\,\omega)-\widetilde{u}(x)|^2\,dx\right)^\frac{1}{2}&\le \left(\int_{B_r} |\widetilde{u}(x+\varrho\,\omega)-\widetilde{v}(x+\varrho\,\omega)-\widetilde{u}(x)+\widetilde{v}(x)|^2\,dx\right)^\frac{1}{2}\\
		&+\left(\int_{B_r} |\widetilde{v}(x+\varrho\,\omega)-\widetilde{v}(x)|^2\,dx\right)^\frac{1}{2}\\
		&\le 2\,\|\widetilde{u}-\widetilde{v}\|_{L^2(B_{3r})}+|\varrho|\,\|\partial_\omega \widetilde{v}\|_{L^2(B_{3r})}\\
		&\le 2\,\left(\|\widetilde{u}-\widetilde{v}\|_{L^2(B_{3r})}+\frac{|\varrho|}{2}\,\|\nabla \widetilde{v}\|_{L^2(B_{3r})}\right).
	\end{split}
	\]
	In the last estimate, we used the pointwise inequality $|\partial_\omega \widetilde{v}|\le |\nabla \widetilde{v}|$. We can now use the properties of our extension operator $\mathcal{E}_r$, in order to replace the norms over $B_{3r}$ with those on $B_r$. By Proposition \ref{lm:extension}, we have
	\[
	\|\widetilde{u}-\widetilde{v}\|_{L^2(B_{3r})}=\|\mathcal{E}_r[u]-\mathcal{E}_r[v]\|_{L^2(B_{3r})}=\|\mathcal{E}_r[u-v]\|_{L^2(B_{3r})}\le \sqrt{2}\cdot 3^{N}\,\|u-v\|_{L^2(B_r)},
	\]
	and also
	\[
	\|\nabla \widetilde{v}\|_{L^2(B_{3r})}=\Big[\nabla \mathcal{E}_r[v]\Big]_{W^{1,2}(B_{3r})}\le 2\cdot 9^{N}\,[v]_{W^{1,2}(B_r)}.
	\]
	This leads to 
	\[
	\left(\int_{B_r} |u(x+\varrho\,\omega)-u(x)|^2\,dx\right)^\frac{1}{2}\le C\,\left(\|u-v\|_{L^2(B_r)}+\frac{|\varrho|}{2}\,[\nabla v]_{W^{1,2}(B_r)}\right).
	\]
	By combining this estimate with \eqref{dai}, we then obtain for $0<|\varrho|\le 2r$
	\[
	\int_{B_r} \frac{|\widetilde{u}(x+\varrho\,\omega)-\widetilde{u}(x)|^2}{|\varrho|^{1+2\,s}}\,dx\le 4\,C^2\,|\varrho|^{-1-2\,s}\,\mathcal{K}\left(\frac{|\varrho|}{2},u\right)^2.
	\]
	We now integrate with respect to $\varrho$ and make a change of variable. This yields \eqref{2}, as desired. The proof is now over.
\end{proof}
As a straightforward consequence of Proposition \ref{prop:direzioni}, we also get the following result (see also \cite[Lemma A.4]{BD}).
\begin{cor}
\label{coro:francesca}
	Let $0<s<1$, for every $u\in C^\infty_0(\mathbb{R}^N)$ and every $\omega\in\mathbb{S}^{N-1}$, we have 
	\begin{equation}
		\label{francescabis}
		\int_{\mathbb{R}^N}\left(\int_\mathbb{R} \frac{|u(x)-u(x+\varrho\,\omega)|^2}{|\varrho|^{1+2\,s}}\,d\varrho\right)\,dx\le \mathcal{A}\,[u]^2_{W^{s,2}(\mathbb{R}^N)},
	\end{equation}
	for the same constant $\mathcal{A}=\mathcal{A}(N)>0$ appearing in \eqref{francesca}.
\end{cor}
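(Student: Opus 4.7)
The plan is to deduce the global inequality \eqref{francescabis} from the localized estimate \eqref{francesca} of Proposition \ref{prop:direzioni} by an exhaustion argument, sending the ball radius to infinity. The decisive feature enabling this is that the constant $\mathcal{A}=\mathcal{A}(N)$ in \eqref{francesca} is independent of the radius, so no loss occurs in the limit.

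First, I would fix $u\in C^\infty_0(\mathbb{R}^N)$ and $\omega\in\mathbb{S}^{N-1}$. For every $R>0$, the restriction $u|_{\overline{B_R}}$ lies in $C^1(\overline{B_R})$, so Proposition \ref{prop:direzioni} applied on $B_R$ (centered at the origin) gives
\[
\int_{B_R}\left(\int_{r_\omega(x)}^{R_\omega(x)} \frac{|u(x)-u(x+\varrho\,\omega)|^2}{|\varrho|^{1+2\,s}}\,d\varrho\right)\,dx\le \mathcal{A}\,[u]^2_{W^{s,2}(B_R)}.
\]

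Next I would let $R\nearrow+\infty$. On the right-hand side, the non-negative integrand $|u(x)-u(y)|^2/|x-y|^{N+2\,s}$ is integrated on the increasing domains $B_R\times B_R$ exhausting $\mathbb{R}^N\times\mathbb{R}^N$, so monotone convergence yields $[u]^2_{W^{s,2}(B_R)}\nearrow [u]^2_{W^{s,2}(\mathbb{R}^N)}$. On the left-hand side, by Tonelli's theorem I would rewrite the double integral as an integral over
\[
A_R=\Big\{(x,\varrho)\in\mathbb{R}^N\times\mathbb{R}\, :\, x\in B_R,\ x+\varrho\,\omega\in B_R\Big\}.
\]
Since for any $(x,\varrho)\in \mathbb{R}^N\times\mathbb{R}$ one has $(x,\varrho)\in A_R$ as soon as $R>|x|+|\varrho|$, the sets $A_R$ exhaust $\mathbb{R}^N\times\mathbb{R}$; again monotone convergence of the non-negative integrand over $A_R$ produces the double integral in \eqref{francescabis} as the limit.

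I do not foresee any real obstacle: the only mild point is to remember that the dummy integration limits $r_\omega(x)$ and $R_\omega(x)$ in \eqref{francesca} correspond exactly to the slice of $B_R$ along the line $x+\mathbb{R}\,\omega$, and they fill up all of $\mathbb{R}$ as $R\to \infty$ for each fixed $x$. The compact support of $u$ plays only an auxiliary role, guaranteeing that the limit quantity $[u]^2_{W^{s,2}(\mathbb{R}^N)}$ is finite. Combining the two monotone limits in the ball-level inequality yields \eqref{francescabis} with precisely the same constant $\mathcal{A}$, concluding the proof.
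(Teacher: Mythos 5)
Your argument is correct and is precisely the ``straightforward consequence'' the paper has in mind: since the constant $\mathcal{A}$ in \eqref{francesca} depends only on $N$ and not on the radius, applying Proposition \ref{prop:direzioni} on $B_R$ and letting $R\nearrow+\infty$ with monotone convergence on both sides (the slices $(r_\omega(x),R_\omega(x))$ exhausting $\mathbb{R}$, the domains $B_R\times B_R$ exhausting $\mathbb{R}^N\times\mathbb{R}^N$) gives \eqref{francescabis} with the same constant. The paper states the corollary without detailing this exhaustion step, so your write-up matches its intended proof.
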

The next result can be found in \cite{Ma} and \cite[Corollary 1]{Po}. In the latter, the estimate is slightly worse in its dependence on $s$, while in the former the result is not explicitly stated, but it must be extrapolated from the proof of \cite[Corollary 1, page 524]{Ma}. For these reasons, we prefer to provide a full proof, which in any case is different from those of the aforementioned references.
\begin{lemma}[Fractional Poincar\'e-Wirtinger inequality]
\label{lm:PW}
Let $0<s<1$, for every $u\in C^1(\overline{B_r(x_0)})$ we have 
	\[
			\Big\|u-\mathrm{av}(u;B_r(x_0))\Big\|^2_{L^2(B_r(x_0))}\le \mathcal{M}\,(1-s)\,r^{2\,s}\,[u]_{W^{s,2}(B_r(x_0))}^2,
	\]
	for some $\mathcal{M}=\mathcal{M}(N)>0$.
\end{lemma}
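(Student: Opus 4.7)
The plan is to combine the classical Poincar\'e-Wirtinger inequality on a ball with a careful analysis of the $K$-functional $\mathcal{K}(t,u)$ introduced in \eqref{eq: def K}. First the Poincar\'e-Wirtinger quantity is controlled by the $K$-functional at scale $t=r$. For any competitor $v\in W^{1,2}(B_r(x_0))$, the triangle inequality combined with the elementary bound $\|\mathrm{av}(v-u;B_r(x_0))\|_{L^2(B_r(x_0))}\le \|v-u\|_{L^2(B_r(x_0))}$ (Jensen) and the classical Poincar\'e-Wirtinger inequality $\|v-\mathrm{av}(v;B_r(x_0))\|_{L^2(B_r(x_0))}\le C_N\,r\,\|\nabla v\|_{L^2(B_r(x_0))}$ give
\[
\|u-\mathrm{av}(u;B_r(x_0))\|_{L^2(B_r(x_0))}\le 2\,\|u-v\|_{L^2(B_r(x_0))}+C_N\,r\,\|\nabla v\|_{L^2(B_r(x_0))}.
\]
Optimizing over $v$ and comparing with $\mathcal{K}(r,u)$ reduces this to an estimate of the form $\|u-\mathrm{av}(u;B_r(x_0))\|_{L^2(B_r(x_0))}\le C'_N\,\mathcal{K}(r,u)$ for some dimensional constant $C'_N$.

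The heart of the argument is then to extract the $(1-s)$ factor from the $K$-functional. Since $\mathcal{K}(t,u)$ is the infimum of affine functions of $t$, the map $t\mapsto\mathcal{K}(t,u)$ is concave on $[0,2r]$, and moreover $\lim_{t\to 0^+}\mathcal{K}(t,u)=0$ by density of smooth functions in $L^2(B_r(x_0))$. These two properties force $t\mapsto\mathcal{K}(t,u)/t$ to be non-increasing on $(0,2r]$, so in particular
\[
\mathcal{K}(t,u)\ge \frac{t}{r}\,\mathcal{K}(r,u),\qquad\mbox{ for every }0<t\le r.
\]
Inserting this pointwise bound into the natural interpolation integral yields
\[
\int_0^{r}\left(\frac{\mathcal{K}(t,u)}{t^s}\right)^2\,\frac{dt}{t}\ge \frac{\mathcal{K}(r,u)^2}{r^2}\,\int_0^r t^{1-2s}\,dt=\frac{\mathcal{K}(r,u)^2}{2\,(1-s)\,r^{2s}}.
\]

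To conclude, the estimate
\[
\int_0^{2r}\left(\frac{\mathcal{K}(t,u)}{t^s}\right)^2\,\frac{dt}{t}\le A_1\,\iint_{B_r(x_0)\times B_{2r}(x_0)}\frac{|\mathcal{E}_r[u](x)-\mathcal{E}_r[u](x+h)|^2}{|h|^{N+2\,s}}\,dx\,dh
\]
established inside the proof of Proposition \ref{prop:direzioni}, together with the continuity of the extension operator $\mathcal{E}_r$ from Proposition \ref{lm:extension}, produces an upper bound of the form $\int_0^{2r}(\mathcal{K}(t,u)/t^s)^2\,dt/t\le C''(N)\,[u]^2_{W^{s,2}(B_r(x_0))}$. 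Chaining with the lower bound derived above yields $\mathcal{K}(r,u)^2\le 2\,(1-s)\,C''(N)\,r^{2s}\,[u]^2_{W^{s,2}(B_r(x_0))}$, and inserting this back into the first step proves the lemma with $\mathcal{M}(N)=2\,(C'_N)^2\,C''(N)$.

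The only delicate point is the concavity argument just described: without the non-increasing behaviour of $t\mapsto\mathcal{K}(t,u)/t$, the $K$-functional estimate of Proposition \ref{prop:direzioni} would only produce a Poincar\'e-Wirtinger constant uniform in $s$, insufficient to recover the classical inequality in the BBM limit $s\nearrow 1$. All the remaining ingredients---Jensen's inequality, classical Poincar\'e-Wirtinger on the ball and the extension machinery---are already in place in the preceding sections.
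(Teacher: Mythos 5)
Your proof is correct, and it rests on the same two pillars as the paper's: the estimate $\int_0^{2r}\left(\mathcal{K}(t,u)\,t^{-s}\right)^2\,\frac{dt}{t}\le C(N)\,[u]^2_{W^{s,2}(B_r(x_0))}$, obtained by combining \eqref{0} and \eqref{1} from the proof of Proposition \ref{prop:direzioni} (whose constants are indeed $s$-independent), and the comparison of $\|u-\mathrm{av}(u;B_r(x_0))\|_{L^2(B_r(x_0))}$ with the $K$-functional \eqref{eq: def K} via Jensen's inequality and the classical Poincar\'e--Wirtinger inequality. Where you differ is in how the second ingredient is fed into the first: the paper runs the triangle-inequality argument at \emph{every} scale $t\in(0,2r]$, keeping the factor $t$ in front of the gradient term and bounding the remaining terms by $2r$, which yields $t\,\|u-\mathrm{av}(u;B_r(x_0))\|_{L^2(B_r(x_0))}\le C\,r\,\mathcal{K}(t,u)$ directly; integrating against $t^{-1-2s}\,dt$ then produces the $(1-s)$ factor. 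You instead prove the bound only at the single scale $t=r$ and recover all smaller scales through the concavity of $t\mapsto\mathcal{K}(t,u)$ together with $\mathcal{K}(0,u)=0$ (immediate here, since $u\in C^1(\overline{B_r(x_0)})\subset W^{1,2}(B_r(x_0))$), which makes $\mathcal{K}(t,u)/t$ non-increasing; this is a legitimate and slightly slicker structural observation, and the computation $\int_0^r t^{1-2s}\,dt=r^{2-2s}/(2-2s)$ then delivers the same $(1-s)$ gain with an $s$-independent $\mathcal{M}(N)$. One remark: your closing sentence overstates the role of concavity. The monotonicity of $\mathcal{K}(t,u)/t$ is not needed to obtain the sharp $(1-s)$ behaviour --- the paper's direct all-scales estimate achieves it with no convexity-type input --- so concavity is a convenient shortcut, not the decisive point.
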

\begin{proof}
We can suppose that $x_0=0$, without loss of generality. We use real interpolation techniques, as in the previous result. By combining \eqref{0} and \eqref{1}, we have 
\begin{equation}
\label{ciaovladimiro!}
[u]_{W^{s,2}(B_r)}\ge \frac{1}{C}\,\int_0^{2\,r} \left(\frac{\mathcal{K}(t,u)}{t^s}\right)^2\,\frac{dt}{t},
\end{equation}
where $C$ depends on the dimension $N$ only and $\mathcal{K}(t,u)$ is still defined by \eqref{eq: def K}. We now take $0<t\le 2\,r$ and $v\in W^{1,2}(B_r)$, by the triangle inequality we get
\[
\begin{split}
t\,\|u-\mathrm{av}(u;B_r)\|_{L^2(B_r)}&\le t\,\|u-v\|_{L^2(B_r)}+t\,\|v-\mathrm{av}(v;B_r)\|_{L^2(B_r)}+t\,\|\mathrm{av}(v;B_r)-\mathrm{av}(u;B_r)\|_{L^2(B_r)}\\
&\le 2\,r\,\left(\|u-v\|_{L^2(B_r)}+\|\mathrm{av}(v;B_r)-\mathrm{av}(u;B_r)\|_{L^2(B_r)}\right)+t\,\|v-\mathrm{av}(v;B_r)\|_{L^2(B_r)}.
\end{split}
\]
By using Jensen's inequality we have 
\[
\|\mathrm{av}(v;B_r)-\mathrm{av}(u;B_r)\|_{L^2(B_r)}\le \|u-v\|_{L^2(B_r)},
\]
while by using the classical Poincar\'e-Wirtinger inequality we have
\[
\|v-\mathrm{av}(v;B_r)\|_{L^2(B_r)}\le \frac{r}{\mu}\,[v]_{W^{1,2}(B_r)},
\]
for some $\mu=\mu(N)>0$.
By keeping all these estimates together, we obtain
\[
\begin{split}
t\,\|u-\mathrm{av}(u;B_r)\|^2_{L^2(B_r)}&\le 4\,r\,\|u-v\|_{L^2(B_r)}+\frac{t\,r}{\mu(B_1)}\,[v]_{W^{1,2}(B_r)}\\
&\le C\,r\,\left(\|u-v\|_{L^2(B_r)}+t\,[v]_{W^{1,2}(B_r)}\right),
\end{split}
\]
where $C=\max\{4,1/\mu\}$ depends on $N$ only. If we now take the infimum over $v\in W^{1,2}(B_r)$, we get
\[
t\,\|u-\mathrm{av}(u;B_r)\|_{L^2(B_r)}\le C\,r\,\mathcal{K}(t,u),\qquad \mbox{ for } 0<t\le 2\,r.
\]
By raising to the power $2$, dividing by $t^{2\,s+1}$  and integrating over $(0,2\,r)$, this yields
\[
\|u-\mathrm{av}(u;B_r)\|^2_{L^2(B_r)}\,\frac{(2\,r)^{2-2\,s}}{2\,(1-s)}\le C^2\,r^2\,\int_0^{2\,r} \left(\frac{\mathcal{K}(t,u)}{t^s}\right)^2\,\frac{dt}{t}.
\]
By using this estimate in \eqref{ciaovladimiro!}, we finally get the desired conclusion.
\end{proof}
We conclude this section with a particular case of the well-known fractional Morrey--type embedding in the space of continuous functions (see for example \cite[Corollary 7.9.4]{Ho}). For our scopes, we need a precise ``quantitative'' behaviour of the relevant constant, as $s$ goes to $1$ or $1/2$. Here we take $N=1$.
\begin{teo}[Fractional Morrey-Sobolev inequality]
\label{teo:MC}
For every $1/2<s<1$  there exists a constant $\mathfrak{m}_s>0$ depending on $s$ only, such that 
	\begin{equation}
	\label{morrey-sobolev}
	\mathfrak{m}_s\,[u]^2_{W^{s-\frac{1}{2},\infty}(\mathbb{R})}\le [u]^2_{W^{s,2}(\mathbb{R})},\qquad \mbox{ for every } u\in C^\infty_0(\mathbb{R}).
	\end{equation}
	In particular, if $a<b$ we have 
		\begin{equation}
	\label{eq: |u|^2<seminorma}
		\mathfrak{m}_s\,\|u\|_{L^\infty((a,b))}^2\le (b-a)^{2\,s-1}\,[u]^2_{W^{s,2}(\mathbb{R})},\qquad \mbox{ for every }u\in C^\infty_0((a,b)).
	\end{equation}
	Moreover, the constant $\mathfrak{m}_s$ has the following asymptotic behaviour
	\[
	\mathfrak{m}_s\sim {2\,s-1},\quad\mbox{as }s\searrow1/2,\qquad\mbox{and}\qquad \mathfrak{m}_s\sim\frac{1}{1-s},\quad\mbox{as }s\nearrow1.
	\]
\end{teo}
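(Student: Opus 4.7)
The implication from \eqref{morrey-sobolev} to \eqref{eq: |u|^2<seminorma} is immediate: if $u \in C^\infty_0((a,b))$, extending by zero yields $u(a)=0$, so $|u(x)|^2 \le [u]^2_{W^{s-1/2,\infty}(\mathbb{R})}\,(b-a)^{2s-1}$ for every $x \in (a,b)$, and multiplying by $\mathfrak{m}_s$ and invoking \eqref{morrey-sobolev} produces \eqref{eq: |u|^2<seminorma}. So it suffices to prove \eqref{morrey-sobolev} with the claimed asymptotic behaviour of $\mathfrak{m}_s$.

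The plan is to control $|u(x)-u(y)|$ for $u \in C^\infty_0(\mathbb{R})$ and $h := y-x > 0$ by two complementary dyadic chainings of averages, each sharp at one endpoint of $(1/2,1)$, and to define $\mathfrak{m}_s$ as the maximum of the two resulting constants. For the first estimate (sharp as $s \searrow 1/2$), I consider the nested intervals $J^{(k)}=(x,x+h/2^k)$ and the averages $A_k := \mathrm{av}(u;J^{(k)})$; since $A_k \to u(x)$ by Lebesgue differentiation and $J^{(k-1)}$ is the disjoint union of its left half $J^{(k)}$ and right half $J^{(k-1)}_R$, a Jensen-type computation that bounds $|\xi_1-\xi_2|^{1+2s}\le (h/2^{k-1})^{1+2s}$ inside the Gagliardo kernel yields
\[
|A_{k-1}-A_k|^2 \le C\,h^{2s-1}\,2^{-k(2s-1)}\,F_k,\qquad F_k := \iint_{J^{(k)}\times J^{(k-1)}_R}\frac{|u(\xi_1)-u(\xi_2)|^2}{|\xi_1-\xi_2|^{1+2s}}\,d\xi_1\,d\xi_2.
\]
The crucial geometric observation is that the rectangles $J^{(k)}\times J^{(k-1)}_R$ are pairwise disjoint as $k$ varies (their $\xi_1$-projections sit in disjoint dyadic annuli around $x$), so $\sum_k F_k \le [u]^2_{W^{s,2}(\mathbb{R})}$. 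Cauchy--Schwarz with weights $\lambda_k = 2^{-k(2s-1)}$, whose sum behaves like $1/(2s-1)$ as $s\searrow1/2$, gives $|u(x)-A_0|^2 \le C\,h^{2s-1}\,[u]^2_{W^{s,2}(\mathbb{R})}/(1-2^{-(2s-1)})$. The symmetric argument zooming into $y$ yields Bound~A with $\mathfrak{m}_s^{(A)} \sim 2s-1$ near $s=1/2$, though only bounded from below by a constant as $s\nearrow1$.

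For the second estimate (sharp as $s\nearrow 1$), I apply the fractional Poincar\'e--Wirtinger inequality of Lemma \ref{lm:PW} in a chaining along balls $B_{r_k}(x)=(x-r_k,x+r_k)$ with $r_k=2^{-k-1}h$: setting $\alpha_k := \mathrm{av}(u;B_{r_k}(x))$, Jensen combined with Lemma \ref{lm:PW} produces $|\alpha_k-\alpha_{k+1}|^2 \le C(1-s)\,r_k^{2s-1}\,[u]^2_{W^{s,2}(\mathbb{R})}$, and summing the square roots as a geometric series gives
\[
|u(x)-\alpha_0|^2 \le \frac{C(1-s)\,h^{2s-1}}{(1-2^{-(s-1/2)})^2}\,[u]^2_{W^{s,2}(\mathbb{R})}.
\]
One more application of Lemma \ref{lm:PW} on a ball containing both $B_{r_0}(x)$ and $B_{r_0}(y)$ handles $|\alpha_0(x)-\alpha_0(y)|$, producing Bound~B with $\mathfrak{m}_s^{(B)} \sim 1/(1-s)$ as $s\nearrow 1$ but only $\sim (s-1/2)^2$ as $s\searrow 1/2$. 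Defining $\mathfrak{m}_s := \max(\mathfrak{m}_s^{(A)},\mathfrak{m}_s^{(B)})$ preserves validity of the inequality and exhibits both required asymptotics, since near $s=1/2$ the linear Bound~A dominates the quadratic Bound~B, while near $s=1$ the $(1-s)^{-1}$ of Bound~B dominates the bounded term of Bound~A.

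The main obstacle is that no single chaining delivers both asymptotics: the disjoint-shells mechanism that provides the $(2s-1)^{-1}$ factor in Bound~A is incompatible with the Poincar\'e--Wirtinger $(1-s)$ factor in Bound~B. A hypothetical ``shell Poincar\'e--Wirtinger'' estimate of the form $\|u-\mathrm{av}(u;J)\|^2_{L^2(J)}\le C(1-s)|J|^{2s}\iint_{J_L\times J_R}|u(\xi_1)-u(\xi_2)|^2/|\xi_1-\xi_2|^{1+2s}\,d\xi_1\,d\xi_2$, which would unify them, can be disproved by testing on the affine function $u(\xi)=\xi$ and letting $s\to 1$. Hence two genuinely distinct chainings are necessary, and the result follows by combining them.
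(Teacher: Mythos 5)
Your proposal is correct, but it follows a genuinely different route from the paper. The paper proves \eqref{morrey-sobolev} by Fourier analysis: it writes $u(t)-u(\tau)$ via the inversion formula, applies Cauchy--Schwarz with the weight $|\xi|^{2s}$, uses the identity $\int_{\mathbb{R}}|\xi|^{2s}\,|\mathcal{F}[u](\xi)|^2\,d\xi=2\pi\,A_s\,[u]^2_{W^{s,2}(\mathbb{R})}$ (with $A_s\sim 1-s$ as $s\nearrow 1$), and estimates $\int_{\mathbb{R}}|e^{i\,t\,\xi}-e^{i\,\tau\,\xi}|^2\,|\xi|^{-2s}\,d\xi$ by an optimized low/high frequency splitting, which produces the $(2s-1)^{-1}$ factor; this yields the single closed-form constant $\mathfrak{m}_s=(3-2s)(2s-1)/(2\cdot4^{2-s}A_s)$ carrying both asymptotics at once. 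You instead give a purely real-variable proof by two dyadic chainings and take the maximum of the two constants: the disjoint-shell chaining yields a constant proportional to $1-2^{-(2s-1)}\sim 2s-1$ near $s=1/2$, while the chaining through the fractional Poincar\'e--Wirtinger inequality of Lemma \ref{lm:PW} yields the $(1-s)^{-1}$ behaviour near $s=1$; the maximum has the required asymptotics at both endpoints because your Bound B degenerates like $(2s-1)^2$ near $s=1/2$ and your Bound A merely stays bounded near $s=1$. Both routes are sound: the Fourier argument buys an explicit constant in one stroke but is tied to $p=2$ on the line, whereas your chaining is more robust (it would survive for $p\neq 2$ or in settings with no Fourier transform), at the price of a non-explicit constant and of gluing two separate mechanisms --- your closing observation that no single ``shell Poincar\'e--Wirtinger'' estimate can unify them is a fair justification. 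Two small points to fix in the write-up: in Bound A it is the $\xi_2$-projections $J^{(k-1)}_R$ (the dyadic shells), not the nested $\xi_1$-projections $J^{(k)}$, that are pairwise disjoint, which is what makes the rectangles disjoint and gives $\sum_k F_k\le[u]^2_{W^{s,2}(\mathbb{R})}$; and you should state explicitly that the $x$-side and $y$-side chainings both terminate at the same average $\mathrm{av}(u;(x,y))$, so that the triangle inequality closes.
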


\begin{proof}
We first observe that \eqref{eq: |u|^2<seminorma} is an easy consequence of \eqref{morrey-sobolev}. Indeed, for every $u\in C^\infty_0((a,b))$ and every $x\in(a,b)$, by \eqref{morrey-sobolev} we would get
	\[
		|u(x)|^2=|u(x)-u(a)|^2\le \frac{1}{\mathfrak{m}_s}\,(x-a)^{2\,s-1}\,[u]^2_{W^{s,2}(\mathbb{R})}\le \frac{1}{\mathfrak{m}_s}\,(b-a)^{2\,s-1}\,[u]^2_{W^{s,2}(\mathbb{R})},
	\]
	as desired.
\par
	In order to establish \eqref{morrey-sobolev}, let us take $\varphi\in C^\infty_0(\mathbb{R})$. We indicate by $\mathcal{F}[\varphi]$ its {\it Fourier transform}, defined by
	\[
	\mathcal{F}[\varphi](\xi)=\frac{1}{\sqrt{2\,\pi}}\,\int_{\mathbb{R}}\varphi(t)\,e^{-i\,t\,\xi}\,dt,\qquad \mbox{ for } \xi\in\mathbb{R}.
	\]
From the inversion formula (see \cite[Chapter VII, Section 1]{Ho}), we can write
\[
\varphi(t)=\frac{1}{\sqrt{2\,\pi}}\,\int_{\mathbb{R}}\mathcal{F}[\varphi](\xi)\,e^{i\,t\,\xi}\,d\xi,\qquad \mbox{ for } t\in\mathbb{R}.
\]	
Thus, for every $t,\tau\in\mathbb{R}$ we get
\begin{equation}
\label{sanlars}
\begin{split}
|\varphi(t)-\varphi(\tau)|&\le \frac{1}{\sqrt{2\,\pi}}\,\int_{\mathbb{R}}\Big|\mathcal{F}[\varphi](\xi)\Big|\,|e^{i\,t\,\xi}-e^{i\,\tau\,\xi}|\,d\xi\\
&\le\frac{1}{\sqrt{2\,\pi}}\,\left(\int_{\mathbb{R}} |\xi|^{2\,s}\,\Big|\mathcal{F}[\varphi](\xi)\Big|^2\,d\xi\right)^\frac{1}{2}\, \left(\int_{\mathbb{R}} \frac{|e^{i\,t\,\xi}-e^{i\,\tau\,\xi}|^2}{|\xi|^{2\,s}}\,d\xi\right)^\frac{1}{2}.
\end{split}
\end{equation}
We now recall that by \cite[Chapter VII, Section 9]{Ho}, we have 
\[
\int_{\mathbb{R}} |\xi|^{2\,s}\,\Big|\mathcal{F}[\varphi](\xi)\Big|^2\,d\xi=2\,\pi\,A_s\,[\varphi]^2_{W^{s,2}(\mathbb{R})},
\]
with the constant $A_s$ given by 
\[
A_s=\left(\int_{\mathbb{R}} \frac{|e^{i\,t}-1|^2}{|t|^{1+2\,s}}\,dt\right)^{-1},
\]
which satisfies
\[
A_s\sim 1-s,\quad \mbox{ for } s\nearrow 1\qquad \mbox{ and }\qquad A_s\sim s\quad \mbox{ for } s\searrow 0.
\]
From \eqref{sanlars}, we obtain
\begin{equation}
\label{sveglia!}
|\varphi(t)-\varphi(\tau)|\le \sqrt{A_s}\,\left(\int_{\mathbb{R}} \frac{|e^{i\,t\,\xi}-e^{i\,\tau\,\xi}|^2}{|\xi|^{2\,s}}\,d\xi\right)^\frac{1}{2}\,[\varphi]_{W^{s,2}(\mathbb{R})}.
\end{equation}
In order to conclude, we are only left with handling the integral on the right-hand side. For every $\alpha>0$, we split this integral as follows
\[
\int_{\mathbb{R}} \frac{|e^{i\,t\,\xi}-e^{i\,\tau\,\xi}|^2}{|\xi|^{2\,s}}\,d\xi=\int_{\{|\xi|\le \alpha\}} \frac{|e^{i\,t\,\xi}-e^{i\,\tau\,\xi}|^2}{|\xi|^{2\,s}}\,d\xi+\int_{\{|\xi|>\alpha\}} \frac{|e^{i\,t\,\xi}-e^{i\,\tau\,\xi}|^2}{|\xi|^{2\,s}}\,d\xi.
\]
In order to estimate the low frequencies, we use the $1-$Lipschitz character of $\vartheta\mapsto e^{i\,\vartheta}$ to infer that 
\[
|e^{i\,t\,\xi}-e^{i\,\tau\,\xi}|\le |t-\tau|\,|\xi|.
\] 
The high frequencies are dealt with by using that 
\[
|e^{i\,t\,\xi}-e^{i\,\tau\,\xi}|\le |e^{i\,t\,\xi}|+|e^{i\,\tau\,\xi}|=2.
\]
These lead to 
\[
\begin{split}
\int_{\mathbb{R}} \frac{|e^{i\,t\,\xi}-e^{i\,\tau\,\xi}|^2}{|\xi|^{2\,s}}\,d\xi&\le 2\,|t-\tau|^2\,\int_0^{\alpha} \xi^{2-2\,s}\,d\xi+8\,\int_\alpha^{+\infty}\xi^{-2\,s}\,d\xi\\
&=\frac{2}{3-2\,s}\,|t-\tau|^2\,\alpha^{3-2\,s}+\frac{8}{2\,s-1}\,\frac{1}{\alpha^{2\,s-1}},
\end{split}
\]
which is valid for every $\alpha>0$. We can now optimize this estimate with respect to $\alpha$: indeed, the quantity on the right-hand side is minimal for\footnote{We can obviously suppose that $t\not=\tau$, otherwise there is nothing to prove.} $\alpha=\alpha_0=2/|t-\tau|$.
With such a choice, we get
\[
\int_{\mathbb{R}} \frac{|e^{i\,t\,\xi}-e^{i\,\tau\,\xi}|^2}{|\xi|^{2\,s}}\,d\xi\le 4^{2-s}\,\frac{2}{(3-2\,s)\,(2\,s-1)}\,|t-\tau|^{2\,s-1}.
\]
By inserting this estimate in \eqref{sveglia!}, we finally get \eqref{morrey-sobolev} with 
\[
\mathfrak{m}_s=\frac{(3-2\,s)\,(2\,s-1)}{2\cdot 4^{2-s}\,A_s},
\]
which has the claimed asymptotic behaviour.
	\end{proof}
\begin{oss}
We point out the reference \cite{Si}, which keeps track of the dependence on $s$ in the one-dimensional fractional Morrey estimate, as this parameter goes to the borderline situation $s=1/2$ (see \cite[Corollary 26]{Si}). However, the asymptotic behaviour detected in this reference is sub-optimal. Moreover, the asymptotic behaviour as $s$ goes to $1$ is not taken into account. For these reasons, the estimates of \cite{Si} are not suitable for our needings.
\end{oss}

\section{Basics of fractional capacity}
\label{sec:4}

We start with the definition of fractional capacity.
\begin{definizione}
	Let $\Sigma\subseteq\mathbb{R}^N$ be a compact set and let $\Omega\subseteq\mathbb{R}^N$ be an open set such that $\Sigma\Subset \Omega$. For $0<s<1$, we define  the {\it fractional capacity of $\Sigma$ of order $s$ relative to $\Omega$} as the quantity
	\[
	\widetilde{\mathrm{cap}}_s(\Sigma;\Omega)=\inf_{u\in C^\infty_0(\Omega)}\left\{[u]^2_{W^{s,2}(\mathbb{R}^N)}\, :\, u\ge 1_\Sigma\right\}.
	\]
	Here $1_\Sigma$ denotes the characteristic function of $\Sigma$.
\end{definizione}

\begin{oss}
\label{oss:lipschitz}
By standard approximation arguments based on convolutions, it is easy to see that in the definition of $\widetilde{\mathrm{cap}}_s(\Sigma;\Omega)$ we can replace $C^\infty_0(\Omega)$ with Lipschitz functions having compact support in $\Omega$. We leave the details to the reader.
\end{oss}
As a straightforward consequence of both the definition and the Morrey--type inequality, we have an explicit lower-bound for the fractional capacity of a point. As simple as it is, this will play a crucial role in our main result. 
\begin{lemma}[One-dimensional capacity of a point]
\label{cor: capacità punto risp segmento} 
	Let $1/2<s<1$ and $x_0\in (a,b)$. Then
	\[
	\widetilde{\mathrm{cap}}_s\big(\{x_0\};(a,b)\big)\ge (b-a)^{1-2\,s}\,\mathfrak{m}_s,
	\]
where $\mathfrak{m}_s$ is the same constant as in Theorem \ref{teo:MC}.
\end{lemma}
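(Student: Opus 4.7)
The plan is to argue directly from the Morrey--Sobolev inequality stated in Theorem \ref{teo:MC}, namely \eqref{eq: |u|^2<seminorma}. By the definition of $\widetilde{\mathrm{cap}}_s(\{x_0\};(a,b))$, any admissible competitor is a $u\in C^\infty_0((a,b))$ with $u\ge 1_{\{x_0\}}$, which simply means $u(x_0)\ge 1$, and in particular $\|u\|_{L^\infty((a,b))}\ge 1$.

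Applying \eqref{eq: |u|^2<seminorma} to such a $u$ gives
\[
\mathfrak{m}_s \le \mathfrak{m}_s\,\|u\|_{L^\infty((a,b))}^2 \le (b-a)^{2s-1}\,[u]^2_{W^{s,2}(\mathbb{R})},
\]
which rearranges to
\[
(b-a)^{1-2s}\,\mathfrak{m}_s \le [u]^2_{W^{s,2}(\mathbb{R})}.
\]
Taking the infimum over all admissible $u$ yields the claimed lower bound on $\widetilde{\mathrm{cap}}_s(\{x_0\};(a,b))$.

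There is essentially no obstacle here: the hypothesis $s>1/2$ is exactly what makes points have positive one-dimensional fractional capacity (equivalently, what makes the Morrey embedding $W^{s,2}(\mathbb{R})\hookrightarrow C^{0,s-1/2}(\mathbb{R})$ available), and the quantitative form of $\mathfrak{m}_s$ with its asymptotic behaviour as $s\searrow 1/2$ is inherited directly from Theorem \ref{teo:MC}.
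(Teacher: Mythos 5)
Your argument is correct and is essentially the paper's own proof: both apply the quantitative Morrey--Sobolev inequality \eqref{eq: |u|^2<seminorma} to an admissible trial function, use $u(x_0)\ge 1$ (the paper evaluates at $x_0$ directly, you pass through $\|u\|_{L^\infty}\ge 1$, which is the same thing), and conclude by taking the infimum. No differences of substance.
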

\begin{proof}
	Let us take $u\in C^\infty_0((a,b))$ such that $u(x_0)\ge1$. Hence, from \eqref{eq: |u|^2<seminorma}, we get
	\[
	1\le|u(x_0)|^2\le \frac{(b-a)^{2\,s-1}}{\mathfrak{m}_s}\,[u]^2_{W^{s,2}(\mathbb{R})}.
	\]
	The thesis follows by taking the infimum over the admissible functions $u$.
\end{proof}

\subsection{A Maz'ya--type Poincar\'e inequality}

We will need the following fractional Poincar\'e inequality for functions on a cube, which vanish in a neighborhood of a set with positive fractional capacity. This is analogous to the result of \cite[Theorem A]{Ta}, but we will follow the approach of \cite[Chapter 14]{Ma}, which is more suitable for our framework. In particular, we will not explicitly relate this result to eigenvalues with mixed boundary conditions, differently from \cite{Ta}.
\begin{figure}
\includegraphics[scale=.3]{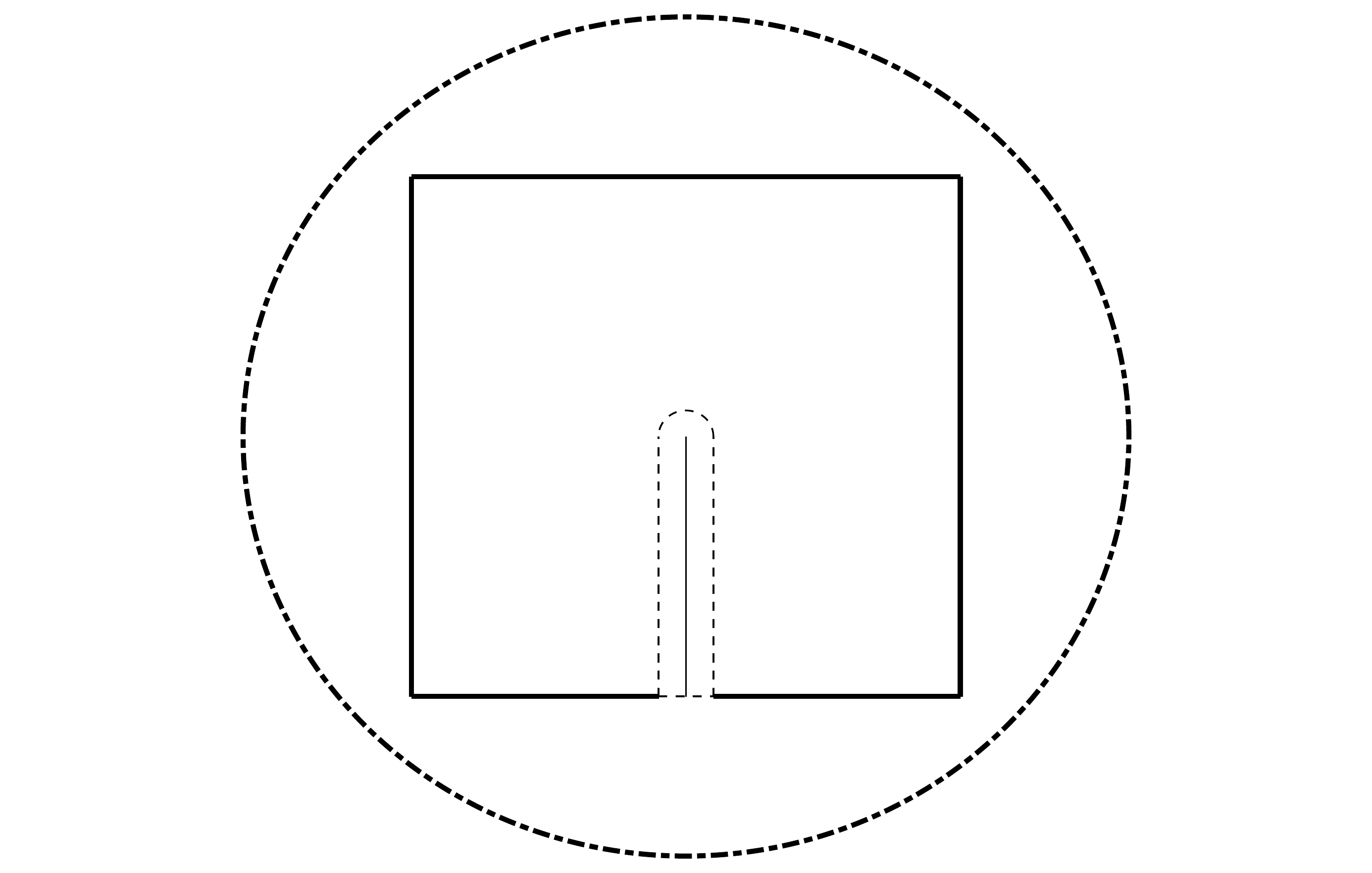}
\caption{The geometric configuration of Lemma \ref{lm:poincare_cap}: we have a smooth function defined on the square, which vanishes on the dashed neighborhood of the vertical line (i.e. the set $\Sigma$). The relative fractional capacity of $\Sigma$ is computed with respect to the surronding disk.}
\end{figure}
\begin{prop}\label{lm:poincare_cap}
	Let $0<s<1$ and let $\Sigma\subseteq \overline{Q_r(x_0)}\subseteq \mathbb{R}^N$ be a compact set. For every $R>\sqrt{N}\,r$, there exists a constant $\phi(N,R/r)>0$ such that the following Poincar\'e inequality holds
	\begin{equation}
	\label{eq: stima capacità}
	[u]^2_{W^{s,2}(Q_r(x_0))}\ge\left[ \frac{s}{r^N}\,\phi\left(N,\frac{R}{r}\right)\right]\,\widetilde{\mathrm{cap}}_s\big(\Sigma;B_R(x_0)\big)\,\|u\|^2_{L^2(Q_r(x_0))},
	\end{equation}
	for every $u\in C^\infty(\overline{Q_r(x_0)})$ with $\mathrm{dist}(\mathrm{supp}(u),\Sigma)>0$. Moreover, we have
	\[
	\lim_{t\to +\infty} \phi(N,t)=\lim_{t\searrow \sqrt{N}} \phi(N,t)=0.		
	\]
\end{prop}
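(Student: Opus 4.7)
The proof will adapt Maz'ya's classical capacitary strategy (see \cite[Chapter 14]{Ma}) to the fractional setting. The plan is to estimate $\|u\|_{L^2(Q_r(x_0))}^2$ by splitting it into an oscillation term, handled by the fractional Poincaré-Wirtinger inequality, and a mean-value term, handled by a test-function argument for the fractional capacity. The advantage of this approach over Taylor's is that it uses directly the variational definition of $\widetilde{\mathrm{cap}}_s$, avoiding heat-kernel estimates and mixed eigenvalue problems.

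First I would set $\bar u := \mathrm{av}(u; Q_r(x_0))$ and, via Lemma \ref{lm:PW} applied on the circumscribed ball $B_{\sqrt{N}r}(x_0)$ combined with the extension operator $\mathcal{E}_{Q_r}$ of Corollary \ref{cor:extensionK}, obtain
\[
\|u\|^2_{L^2(Q_r(x_0))}\le C_N(1-s)\,r^{2s}\,[u]^2_{W^{s,2}(Q_r(x_0))}+(2r)^N\,\bar u^2.
\]
The second step is the capacitary control of $\bar u$. Assuming $\bar u\neq 0$ (the case $\bar u=0$ following by Poincaré-Wirtinger and the trivial bound on capacity), let $\eta$ be a Lipschitz cutoff with $\eta\equiv 1$ on $\overline{Q_r(x_0)}$, $\eta\equiv 0$ outside $B_R(x_0)$ and $\|\nabla\eta\|_{\infty}\lesssim (R-\sqrt{N}r)^{-1}$, and let $\tilde u=\mathcal{E}_{Q_r}[u]$ be suitably truncated to have compact support in $B_R(x_0)$. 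Since $u\equiv 0$ in a neighborhood of $\Sigma\subset\overline{Q_r(x_0)}$ and $\eta\equiv 1$ there, the function
\[
\psi:=\frac{\eta(\bar u-\tilde u)}{\bar u}
\]
satisfies $\psi\equiv 1$ on $\Sigma$ and has compact support in $B_R(x_0)$, hence is admissible for the capacity by Remark \ref{oss:lipschitz}. This yields $\bar u^2\,\widetilde{\mathrm{cap}}_s(\Sigma;B_R(x_0))\le [\eta(\bar u-\tilde u)]^2_{W^{s,2}(\mathbb{R}^N)}$. The right-hand side is then estimated by expanding via the product-rule identity $(\eta a)(x)-(\eta a)(y)=\eta(y)(a(x)-a(y))+a(x)(\eta(x)-\eta(y))$ with $a=\bar u-\tilde u$, and controlling the resulting pieces using Corollary \ref{cor:extensionK} for $[\tilde u]_{W^{s,2}(\mathbb{R}^N)}\lesssim [u]_{W^{s,2}(Q_r)}$ and direct computation for $[\eta]^2_{W^{s,2}(\mathbb{R}^N)}$ (where integrating the kernel $|x-y|^{-N-2s}$ against the Lipschitz bound produces the explicit $s$-factor and the $\phi(N,R/r)$ geometric dependence). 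Substituting back into the Poincaré-Wirtinger bound and rearranging produces the desired inequality \eqref{eq: stima capacità}.

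The main technical obstacle lies in this last estimate: a naive product-rule bound on $[\eta(\bar u-\tilde u)]^2_{W^{s,2}(\mathbb{R}^N)}$ picks up a term of size $\bar u^2[\eta]^2_{W^{s,2}(\mathbb{R}^N)}$ which, since $\eta$ itself is admissible for $\widetilde{\mathrm{cap}}_s(\Sigma;B_R(x_0))$, is comparable to (and dominates) the quantity $\bar u^2\,\widetilde{\mathrm{cap}}_s$ we wish to bound. To overcome this, I would use the exact identity $[\eta(\bar u-\tilde u)]^2_{W^{s,2}(\mathbb{R}^N)}=\bar u^2[\eta]^2-2\bar u\,\langle\eta,\eta\tilde u\rangle_s+[\eta\tilde u]^2$ and minimize over the one-parameter family of admissible test functions $\eta-c\,\eta\tilde u$, where Cauchy-Schwarz on the cross term creates the missing cancellation. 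Tracking the precise $s$-dependence through this optimization, and separately through the Poincaré-Wirtinger step (which carries a $(1-s)$ factor from Lemma \ref{lm:PW}) and through the singular tail of $|x-y|^{-N-2s}$ (which carries a $1/s$ factor in the cutoff seminorm), is what produces the explicit $s/r^N$ prefactor and the claimed asymptotics $\phi(N,R/r)\to 0$ as $R/r\to\sqrt{N}^{+}$ (blow-up of $[\eta]^2$ on the shrinking transition annulus) and as $R/r\to\infty$ (deterioration of the constants in Corollary \ref{cor:extensionK}).
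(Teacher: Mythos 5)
Your overall skeleton coincides with the paper's: extend $u$ from the cube with the operator of Corollary \ref{cor:extensionK}, take a Lipschitz cutoff $\eta$ between $B_{\sqrt{N}\,r}(x_0)$ and $B_R(x_0)$, test the capacity with a function of the form $(\mathrm{constant}-\tilde u)\,\eta$ (the paper normalizes $\fint_{Q_r}|u|^2\,dx=1$ and uses $(1-\tilde u)\,\eta$; you use $\eta\,(\bar u-\tilde u)/\bar u$), then split by Minkowski and invoke the fractional Poincar\'e--Wirtinger inequality. Up to this point the two arguments are essentially the same.

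The gap lies exactly in the step you single out as the main obstacle, and your proposed fix does not work. First, the obstacle is illusory: the product-rule identity you quote, with $a=\bar u-\tilde u$ kept as a single factor, does \emph{not} produce a term $\bar u^2\,[\eta]^2_{W^{s,2}}$. After Minkowski it produces
\[
[\eta\,a]_{W^{s,2}}\le \left(\iint |\eta(y)|^2\,\frac{|\tilde u(x)-\tilde u(y)|^2}{|x-y|^{N+2\,s}}\,dx\,dy\right)^\frac{1}{2}+\|a\|_{L^2}\,\sup_{x}\left(\int \frac{|\eta(x)-\eta(y)|^2}{|x-y|^{N+2\,s}}\,dy\right)^\frac{1}{2},
\]
so the delicate factor is $\|\bar u-\tilde u\|_{L^2(B_R)}$, not $|\bar u|\,|B_R|^{1/2}$. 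What is genuinely missing from your sketch is how to control this factor: one must compare it with the oscillation $\|\tilde u-\mathrm{av}(\tilde u;B_R)\|_{L^2(B_R)}$ plus a comparison of the two averages (Jensen on $Q_r$, or, in the paper's normalization, a reverse triangle inequality in $L^2(Q_r)$ showing that the ``constant'' term is dominated by the oscillation term), and then conclude by Lemma \ref{lm:PW} together with \eqref{continuoK}; in addition, since the capacity involves the seminorm over all of $\mathbb{R}^N$, the nonlocal tail $\int_{\mathbb{R}^N\setminus B_R}|x-y|^{-N-2\,s}\,dy$ must be estimated separately, and the sharp $s$-dependence of the cutoff contribution requires the bound of \cite[Lemma 2.6]{BBZ}. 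None of these steps appear in your proposal. Second, your replacement argument -- expanding $[\eta\,(\bar u-\tilde u)]^2=\bar u^2\,[\eta]^2-2\,\bar u\,\langle\eta,\eta\,\tilde u\rangle_s+[\eta\,\tilde u]^2$ (with $\langle\cdot,\cdot\rangle_s$ the bilinear form of the seminorm), minimizing over $c$ in the family $\eta-c\,\eta\,\tilde u$ and ``using Cauchy--Schwarz on the cross term'' -- cannot create the needed cancellation: Cauchy--Schwarz bounds $|\langle\eta,\eta\,\tilde u\rangle_s|$ \emph{from above} by $[\eta]\,[\eta\,\tilde u]$, which reproduces exactly the bad estimate $\big(|\bar u|\,[\eta]+[\eta\,\tilde u]\big)^2$, while minimizing over $c$ only lowers the left-hand side of $\widetilde{\mathrm{cap}}_s\le[\eta-c\,\eta\,\tilde u]^2$, which is of no help: what must be done is to bound the right-hand side at $c=1/\bar u$ from above by a multiple of $[u]^2_{W^{s,2}(Q_r)}$. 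If you discard that detour, keep $\bar u-\tilde u$ intact, and supply the Poincar\'e--Wirtinger/average-comparison and tail estimates above (also settling the case of small $\bar u$ with the explicit constants the statement requires, which your one-line dismissal does not do), you essentially recover the paper's proof.
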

\begin{proof}
The proof is lengthy, though elementary.
	Without loss of generality we can assume $x_0=0$. 
	Let $u\in C^\infty(\overline{Q_r})$ be as in the statement, we can additionally assume that 
	\begin{equation}
	\label{normalizza}
	\fint_{Q_r} |u|^2\,dx=1,
	\end{equation}
	still without loss of generality. We now use the extension operator $\mathcal{E}_K$ of Corollary \ref{cor:extensionK}, with the choices 
	\[
	K=Q_r\quad \mbox{ and }\quad x_0=0, \qquad \mbox{ so that } \frac{D_K(x_0)}{d_K(x_0)}=\sqrt{N}.
	\]
In order not to overburden the presentation, we will use the symbol $\widetilde{u}$ in place of $\mathcal{E}_K[u]$. By the properties of our extension operator, we get in particular that $\widetilde{u}$ is locally Lipschitz continuous and from \eqref{continuoK} with $p=2$ we also have
\begin{equation}
\label{stimainiziale}
[\widetilde{u}]_{W^{s,2}(B_R)}\le [\widetilde{u}]_{W^{s,2}(Q_R)}\le 
C_N\,\left(\frac{R}{r}\right)^{2\,N}\,[u]_{W^{s,2}(Q_r)}.
\end{equation}
We take a Lipschitz cut-off function $\eta$ such that
	\[
	0\le\eta\le1,\qquad \eta\equiv1\mbox{ in } \overline{B_{\sqrt{N}\,r}},\qquad \eta\equiv 0 \mbox{ in } \mathbb{R}^N\setminus B_\frac{R+\sqrt{N}\,r}{2},\qquad |\nabla \eta|\le \frac{2}{R-\sqrt{N}\,r},
	\]
	and we define $\psi=(1-\widetilde{u})\,\eta$. By recalling Remark \ref{oss:lipschitz}, we have that $\psi$ is an admissible trial function for the variational problem defining $\widetilde{\mathrm{cap}}_s(\Sigma;B_R)$. 
	By using this fact and some algebraic manipulations, we get
	\begin{equation}
\label{inizia}
	\begin{split}
\sqrt{\widetilde{\mathrm{cap}}_s(\Sigma;B_R)}&\le [\psi]_{W^{s,2}(\mathbb{R}^N)}\\
&=\left([\psi]^2_{W^{s,2}(B_R)}+2\,\int_{B_R} |\psi(x)|^2\, \left(\int_{\mathbb{R}^N\setminus B_R} \frac{dy}{|x-y|^{N+2\,s}}\right)\,dx\right)^\frac{1}{2}\\
&\le [\psi]_{W^{s,2}(B_R)}+\sqrt{2}\,\left(\int_{B_R} |\psi(x)|^2\, \left(\int_{\mathbb{R}^N\setminus B_R} \frac{dy}{|x-y|^{N+2\,s}}\right)\,dx\right)^\frac{1}{2}.
\end{split}
\end{equation}
In turn, by using the definition of $\psi$ and Minkowki's inequality, we have 
\[
\begin{split}
[\psi]_{W^{s,2}(B_R)}&\le \left(\int_{B_R}|\eta(x)|^2\left(\int_{B_R}\frac{|\widetilde{u}(x)-\widetilde{u}(y)|^2}{|x-y|^{N+2\,s}}\,dy\right)\,dx\right)^\frac{1}{2}\\
	&+\left(\int_{B_R}|1-\widetilde{u}(y)|^2\,\left(\int_{B_R}\frac{|\eta(x)-\eta(y)|^2}{|x-y|^{N+2\,s}}\,dx\right)\,dy\right)^\frac{1}{2}\\
		&\le [\widetilde{u}]_{W^{s,2}(B_R)}+\|1-\widetilde{u}\|_{L^2(B_R)}\,\sqrt{\frac{2\,C}{s\,(1-s)}}\,\|\nabla \eta\|^s_{L^\infty(B_R)}\,\|\eta\|^{1-s}_{L^\infty(B_R)},
\end{split}
\]
for some $C=C(N)>0$.
In the last inequality, we used that for every Lipschitz function $\varphi$ with compact support, we have 
\[
\sup_{x\in\mathbb{R}^N}\int_{\mathbb{R}^N} \frac{|\varphi(x)-\varphi(y)|^2}{|x-y|^{N+2\,s}}\,dy\le \frac{C}{s\,(1-s)}\,\|\nabla \varphi\|^{2\,s}_{L^\infty(\mathbb{R}^N)}\,\|\varphi\|^{2\,(1-s)}_{L^\infty(\mathbb{R}^N)},
\]
see \cite[Lemma 2.6]{BBZ}. If we now use \eqref{stimainiziale} to bound the seminorm of $\widetilde{u}$ and the properties of $\eta$, from \eqref{inizia} we get
\begin{equation}
\label{continua}
\begin{split}
\sqrt{\widetilde{\mathrm{cap}}_s(\Sigma;B_R)}&\le C_N\,\left(\frac{R}{r}\right)^{2\,N}\,[u]_{W^{s,2}(Q_r)}+\frac{2}{(R-\sqrt{N}\,r)^s}\,\sqrt{\frac{2\,C}{s\,(1-s)}}\,\|1-\widetilde{u}\|_{L^2(B_R)}\\
&+\sqrt{2}\,\left(\int_{B_R} |\psi(x)|^2\, \left(\int_{\mathbb{R}^N\setminus B_R} \frac{dy}{|x-y|^{N+2\,s}}\right)\,dx\right)^\frac{1}{2}.
\end{split}
\end{equation}
In order to handle the last term, we recall that $\psi$ identically vanishes outside $B_{(R+\sqrt{N}\,r)/2}$. Thus, we actually have 
\[
\begin{split}
\int_{B_R} |\psi(x)|^2\, \left(\int_{\mathbb{R}^N\setminus B_R} \frac{dy}{|x-y|^{N+2\,s}}\right)&=\int_{B_\frac{R+\sqrt{N}\,r}{2}} |\eta(x)|^2\,|1-\widetilde u(x)|^2 \left(\int_{\mathbb{R}^N\setminus B_R} \frac{dy}{|x-y|^{N+2\,s}}\right)\,dx\\
&\le \int_{B_\frac{R+\sqrt{N}\,r}{2}}|1-\widetilde u(x)|^2 \left(\int_{\mathbb{R}^N\setminus B_R} \frac{dy}{|x-y|^{N+2\,s}}\right)\,dx.
\end{split}
\] 
We now observe that, for every $x\in B_{(R+\sqrt{N}\,r)/2}$ and $y\not\in B_R$ we have
\[\begin{split} 
|x-y|&\ge|y|-|x|\ge |y|-\frac{R+\sqrt{N}\,r}{2}
\ge |y|-\frac{R+\sqrt{N}\,r}{2\,R}\,|y|
=\left(\frac{R-\sqrt{N}\,r}{2\,R}\right)\,|y|.
\end{split} \]
Thus, for every $x\in B_{(R+\sqrt{N}\,r)/2}$, we get
\[
\int_{\mathbb{R}^N\setminus B_R}\frac{dy}{|x-y|^{N+2\,s}}\,dy
\le \frac{N\,\omega_N}{2\,s}\,\left(\frac{2\,R}{R-\sqrt{N}\,r}\right)^{N+2\,s}\,\frac{1}{R^{2\,s}}.
\]
By collecting the previous estimates, we obtain from \eqref{continua}
\[
\begin{split}
\sqrt{\widetilde{\mathrm{cap}}_s(\Sigma;B_R)}&\le C_N\,\left(\frac{R}{r}\right)^{2\,N}\,[u]_{W^{s,2}(Q_r)}+\frac{2}{(R-\sqrt{N}\,r)^s}\,\sqrt{\frac{2\,C}{s\,(1-s)}}\,\|1-\widetilde{u}\|_{L^2(B_R)}\\
&+\sqrt{\frac{N\,\omega_N}{s}}\,\left(\frac{2\,R}{R-\sqrt{N}\,r}\right)^{\frac{N}{2}+s}\,\frac{1}{R^{s}}\,\|1-\widetilde u\|_{L^2(B_\frac{R+\sqrt{N}\,r}{2})}.
\end{split}
\]
We need to estimate the $L^2$ norm of $1-\widetilde{u}$. For this, we use the triangle inequality
	\[
	\|1-\widetilde u\|_{L^2(B_\frac{R+\sqrt{N}\,r}{2})}\le \|1-\widetilde{u}\|_{L^2(B_R)}
	\le \|1-\mathrm{av}(\widetilde u;B_R)\|_{L^2(B_R)}
	+\|\mathrm{av}(\widetilde u;B_R)-\widetilde{u}\|_{L^2(B_R)}:=\mathcal{I}_1+\mathcal{I}_2,
	\]
so that
\begin{equation}
\label{insiste}
\begin{split}
\sqrt{\widetilde{\mathrm{cap}}_s(\Sigma;B_R)}&\le C_N\,\left(\frac{R}{r}\right)^{2\,N}\,[u]_{W^{s,2}(Q_r)}+\frac{2}{(R-\sqrt{N}\,r)^s}\,\sqrt{\frac{2\,C}{s\,(1-s)}}\,\Big(\mathcal{I}_1+\mathcal{I}_2\Big)\\
&+\sqrt{\frac{N\,\omega_N}{s}}\,\left(\frac{2\,R}{R-\sqrt{N}\,r}\right)^{\frac{N}{2}+s}\,\frac{1}{R^{s}}\,\Big(\mathcal{I}_1+\mathcal{I}_2\Big).
\end{split}
\end{equation}
In turn, the term $\mathcal{I}_1$ can be bounded by $\mathcal{I}_2$. Indeed, 
	by observing that the integrand of $\mathcal{I}_1$ is constant and using the normalization \eqref{normalizza}, we get
	\[
	\begin{split}
		\mathcal{I}_1=\sqrt{|B_R|}\,|1-\mathrm{av}(\widetilde u;B_R)|
		&=\sqrt{\frac{|B_R|}{|Q_r|}}\,\Big|\|u\|_{L^2(Q_r)}-\|\mathrm{av}(\widetilde u;B_R)\|_{L^2(Q_r)}\Big|\\
		&\le \sqrt{\frac{|B_R|}{|Q_r|}}\,\left\|u-\mathrm{av}(\widetilde u;B_R)\right\|_{L^2(Q_r)}\le \sqrt{\frac{|B_R|}{|Q_r|}}\,\mathcal{I}_2.
	\end{split}
	\]
	As for the integral $\mathcal{I}_2$, by Lemma \ref{lm:PW} we directly get
	\[
	\mathcal{I}_2\le \sqrt{\mathcal{M}\,(1-s)}\,R^s\,[\widetilde u]_{W^{s,2}(B_R)}.
	\]
Then the last term can be estimated by \eqref{stimainiziale}, again. 
By inserting these estimates in \eqref{insiste} we eventually conclude the proof.
\end{proof}

\subsection{A geometric lower bound in the plane}

In dimension $N=2$ and for $s>1/2$, by exploiting the fact that points have positive relative fractional capacity (see Lemma \ref{cor: capacità punto risp segmento}), it is possible to give a geometric lower bound for the term
\[
\widetilde{\mathrm{cap}}_s(\Sigma;B_R(x_0)),
\]
appearing in \eqref{eq: stima capacità}. We will follow the idea of \cite[Chapter 3, Section 1.2, Proposition 1]{Ma},
which is quite close to that used by Taylor, even if the latter worked with a different notion of {\it capacity} coming from Potential Theory. The proof will also crucially exploits the result on ``directional'' fractional derivatives (Proposition \ref{prop:direzioni} and Corollary \ref{coro:francesca}). We still use the symbol $\Pi_\omega$ defined in \eqref{projection}.

 \begin{prop}
\label{prop: capacità 2d VS capacità 1d}
	Let $N=2$, $1/2<s<1$ and let $\Sigma\Subset B_r(x_0)$ be a compact set. For every direction $\omega\in\mathbb{S}^1$, it holds that 
	\[
	\widetilde{\mathrm{cap}}_s(\Sigma;B_r(x_0))
	\ge 
	\frac{\mathfrak{m}_s}{\mathcal{A}}\,\Big(r\,\mathrm{dist}(\Sigma,\partial B_r(x_0)\Big)^\frac{1-2\,s}{2}\,\mathcal{H}^1(\Pi_\omega(\Sigma)).
	\]
	Here $\mathcal{A}$ is the same constant as in Proposition \ref{prop:direzioni} and $\mathfrak{m}_s$ is the same constant as in Theorem \ref{teo:MC}.
\end{prop}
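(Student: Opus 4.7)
The plan is to use the dimensional reduction allowed by Corollary \ref{coro:francesca} (directional fractional derivatives) combined with Lemma \ref{cor: capacità punto risp segmento} (positive 1D capacity of a point). I would fix any admissible $u\in C_0^\infty(B_r(x_0))$ with $u\geq 1_\Sigma$ and apply Corollary \ref{coro:francesca} in the direction $\omega$, obtaining
\[
\mathcal{A}\,[u]_{W^{s,2}(\mathbb{R}^2)}^2 \geq \int_{\mathbb{R}^2}\int_{\mathbb{R}} \frac{|u(x)-u(x+\varrho\,\omega)|^2}{|\varrho|^{1+2\,s}}\,d\varrho\,dx.
\]
Decomposing $x = y + t\,\omega$ with $y\in\langle\omega\rangle^\perp$ and $t\in\mathbb{R}$ and appealing to Fubini, the right-hand side rewrites as $\int_{\langle\omega\rangle^\perp} [u_y]_{W^{s,2}(\mathbb{R})}^2\,dy$, where $u_y(t):=u(y+t\,\omega)$.

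For any $y\in\Pi_\omega(\Sigma)$, the 1D section $u_y$ is smooth with compact support in the open interval $I_y:=\{t\in\mathbb{R}\,:\,y+t\,\omega\in B_r(x_0)\}$, and it satisfies $u_y(t_0)\geq 1$ at the parameter $t_0$ corresponding to any point where the fiber meets $\Sigma$. This is where the hypothesis $s>1/2$ pays off: Lemma \ref{cor: capacità punto risp segmento} gives
\[
[u_y]_{W^{s,2}(\mathbb{R})}^2 \geq \widetilde{\mathrm{cap}}_s\big(\{t_0\};\,I_y\big) \geq \mathfrak{m}_s\,|I_y|^{1-2\,s}.
\]
Restricting the $y$-integration to $\Pi_\omega(\Sigma)$, I would then have
\[
\mathcal{A}\,[u]_{W^{s,2}(\mathbb{R}^2)}^2 \geq \mathfrak{m}_s \int_{\Pi_\omega(\Sigma)} |I_y|^{1-2\,s}\,dy.
\]

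The remaining step is a geometric estimate of $|I_y|$. Because $\Sigma\Subset B_r(x_0)$ with $\mathrm{dist}(\Sigma,\partial B_r(x_0))=d$, every $y\in\Pi_\omega(\Sigma)$ satisfies $|y-\Pi_\omega(x_0)|\leq r-d$, which pins $|I_y|=2\sqrt{r^2-|y-\Pi_\omega(x_0)|^2}$ between $2\sqrt{2\,r\,d-d^2}$ and $2\,r$. Combining this with the fact, inherited from the two-dimensional hypothesis $\mathrm{dist}(\Sigma,\partial B_r(x_0))\geq d$, that the parameter $t_0$ lies at distance at least $d$ from both endpoints of $I_y$, a careful manipulation of these bounds (together with $d\leq r$ and the monotonicity of $t\mapsto t^{1-2s}$ for $s>1/2$) would yield the target factor $(r\,d)^{(1-2s)/2}$. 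Passing to the infimum over admissible $u$ then produces the proposition.

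The main obstacle is the sharpness of this final geometric step. A blunt use of $|I_y|\leq 2\,r$ gives only the factor $(2\,r)^{1-2\,s}$, which is strictly weaker than $(r\,d)^{(1-2s)/2}$ as soon as $d<r$. So the core of the argument lies in the interplay between the a priori upper bound on $|y-\Pi_\omega(x_0)|$ and the two-dimensional distance information on $\Sigma$, both of which must be exploited simultaneously in order to upgrade the naive one-sided estimate to the claimed $(r\,d)^{(1-2s)/2}\,\mathcal{H}^1(\Pi_\omega(\Sigma))$. Everything else in the proof is of a standard change-of-variables and Fubini nature.
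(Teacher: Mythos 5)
Your dimensional reduction is exactly the paper's: the same use of Corollary \ref{coro:francesca} plus Fubini, and the same application of Lemma \ref{cor: capacità punto risp segmento} on each fiber, yielding
\[
\mathcal{A}\,[u]^2_{W^{s,2}(\mathbb{R}^2)}\ \ge\ \mathfrak{m}_s\int_{\Pi_\omega(\Sigma)}|I_y|^{1-2s}\,dy .
\]
The gap is the final step you flag yourself: you never actually produce the factor $(r\,d)^{\frac{1-2s}{2}}$, and in fact no ``careful manipulation'' of the ingredients you list can produce it. Since $1-2s<0$, the map $t\mapsto t^{1-2s}$ is \emph{decreasing}, so the geometric information available for $y\in\Pi_\omega(\Sigma)$ --- namely \emph{lower} bounds on $|I_y|$ (such as $|I_y|\ge 2\sqrt{2rd-d^2}\ge\sqrt{rd}$) and the fact that $t_0$ is at distance at least $d$ from both endpoints of $I_y$ --- can only \emph{weaken} the fiberwise bound $\mathfrak{m}_s|I_y|^{1-2s}$; even the sharper form of the Morrey estimate (which controls the capacity of $\{t_0\}$ by the distance of $t_0$ to the nearer endpoint) rewards points \emph{close} to an endpoint, so knowing $t_0$ is far from both endpoints goes the wrong way. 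The honest output of the fiber argument is your ``blunt'' bound $\widetilde{\mathrm{cap}}_s(\Sigma;B_r(x_0))\ge\frac{\mathfrak{m}_s}{\mathcal{A}}(2r)^{1-2s}\mathcal{H}^1(\Pi_\omega(\Sigma))$. Worse, the stated stronger inequality fails when $d\ll r$: take $r=1$, $\Sigma=[-(1-d),1-d]\times\{0\}$ and $\omega=\mathbf{e}_2$; the claimed right-hand side blows up like $d^{\frac{1-2s}{2}}$ as $d\searrow0$, while a competitor given by the maximum of a fixed central bump and of Lipschitz bumps at dyadic scales $2^{-j}$ near the two tips (each supported at distance comparable to $2^{-j}$ from $\partial B_1$, with $[w_j]^2_{W^{s,2}(\mathbb{R}^2)}\le C_s\,2^{-j(2-2s)}$, summable because $s<1$; the submodularity of the seminorm from \cite{GM} sums them up) keeps $\widetilde{\mathrm{cap}}_s(\Sigma;B_1)$ bounded uniformly in $d$.

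You should know that the paper's own proof stumbles at exactly this point: after proving $[u(x'+\cdot\,\omega)]^2_{W^{s,2}(\mathbb{R})}\ge\mathfrak{m}_s\,(R_\omega(x')-r_\omega(x'))^{1-2s}$ it deduces the factor $(r\ell)^{\frac{1-2s}{2}}$ from the \emph{lower} bound $R_\omega(x')-r_\omega(x')\ge\sqrt{r\ell}$, which is the monotonicity flip described above; so the obstacle you identified is a genuine defect of the statement and its proof, not a missing idea on your side. The corrected, weaker version with $(2r)^{1-2s}$ in place of $(r\,\mathrm{dist}(\Sigma,\partial B_r(x_0)))^{\frac{1-2s}{2}}$ is all that the application in Section \ref{sec:5} requires: there the proposition is invoked with $r=10\,\delta$ and $\mathrm{dist}(\Sigma_{i,j},\partial\mathcal{B}_{i,j})\ge 5\,(2-\sqrt{2})\,\delta$ comparable to $r$, so only a factor bounded above and below uniformly in $s\in(1/2,1)$ is at stake and the asymptotics of $\vartheta_s$ are unaffected. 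The constructive conclusion for your write-up is therefore to prove and use the $(2r)^{1-2s}$ version (which your argument already establishes in full), rather than chase the stated $(r\,d)^{\frac{1-2s}{2}}$ factor.
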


\begin{proof}
We observe that we can assume $\mathcal{H}^1(\Pi_\omega(\Sigma))>0$, otherwise there is nothing to prove. We may suppose as always that $x_0=0$, without loss of generality.
\par
We start by noticing that every $x\in \mathbb{R}^2$ can be written as
\[
x=x'+t\,\omega,\qquad \mbox{ with } x'\in\Pi_\omega(\mathbb{R}^2)\ \mbox{and}\ t\in \mathbb{R}.
\] 
We also set 
\[
R_\omega(x')=\sup\Big\{\varrho\in\mathbb{R}\, :\, x'+\varrho\,\omega\in B_r\Big\}\qquad \mbox{ and }\qquad r_\omega(x')=\inf\Big\{\varrho\in\mathbb{R}\, :\, x'+\varrho\,\omega\in B_r\Big\}.
\]
We take $u\in C^\infty_0(B_r)$ such that $u\ge 1_\Sigma$. By using Corollary \ref{coro:francesca} and Fubini's Theorem, we can infer
	\begin{equation}\label{eq: seminorma palle con sigma_omega}
	\begin{split}
		[u]^2_{W^{s,2}(\mathbb{R}^2)}&\ge \frac{1}{\mathcal{A}}\,
		\int_{\mathbb{R}^2}\left(\int_{\mathbb{R}} \frac{|u(x)-u(x+\varrho\,\omega)|^2}{|\varrho|^{1+2\,s}}\,d\varrho\right)\,dx\\
		&=\frac{1}{\mathcal{A}}\,
		\int_{\Pi_\omega(\mathbb{R}^2)}\left(\iint_{\mathbb{R}\times \mathbb{R}} \frac{|u(x'+t\,\omega)-u(x'+t\,\omega+\varrho\,\omega)|^2}{|\varrho|^{1+2\,s}}\,dt\,d\varrho\right)\,dx'\\
			&\ge \frac{1}{\mathcal{A}}\,\int_{\Pi_\omega(\Sigma)}[u(x'+\cdot\,\omega)]^2_{W^{s,2}(\mathbb{R})}\,dx'.
	\end{split}
	\end{equation}
	Recalling that $u\ge1$ on $\Sigma$, it follows that for every $x'\in \Pi_\omega(\Sigma)$ there exists $t_0$ such that $u(x'+t_0\,\omega)\ge1$. Hence, by using the trial function
\[
\psi_{x'}=u(x'+\cdot\,\omega)\in C^\infty_0((r_\omega(x'),R_\omega(x'))),
\]	
we have
	\[
	[u(x'+\cdot\,\omega)]^2_{W^{s,2}(\mathbb{R})}=[\psi_{x'}]^2_{W^{s,2}(\mathbb{R})}\ge \widetilde{\mathrm{cap}}_s\big(\{t_0\};(r_\omega(x'),R_\omega(x')\big),\qquad \mbox{ for } x'\in \Pi_\omega(\Sigma),
	\]
	by the very definition of capacity.
In turn, by applying Lemma \ref{cor: capacità punto risp segmento} in the right-hand side above, we get 
\[
[u(x'+\cdot\,\omega)]^2_{W^{s,2}(\mathbb{R})}\ge \mathfrak{m}_s\,\Big(R_\omega(x')-r_\omega(x')\Big)^{1-2\,s}.
\]
In order to get a lower bound for the last term, we set $\ell=\mathrm{dist}(\Sigma,\partial B_r)>0$. Then in particular we have
\[
R_\omega(x')-r_\omega(x')\ge \sqrt{r^2-(r-\ell)^2}\ge \sqrt{r\,\ell},\qquad \mbox{ for every } x'\in \Pi_\omega(\Sigma).
\]
This entails that 
\[
[u(x'+\cdot\,\omega)]^2_{W^{s,2}(\mathbb{R})}\ge \mathfrak{m}_s \,(r\,\ell)^\frac{1-2\,s}{2},\qquad \mbox{ for every } x'\in \Pi_\omega(\Sigma).
\]
By spending this information in \eqref{eq: seminorma palle con sigma_omega}, we can obtain
\[
[u]^2_{W^{s,2}(B_r)}
\ge \frac{\mathfrak{m}_s}{\mathcal{A}}\,(r\,\ell)^\frac{1-2\,s}{2}\,\mathcal{H}^1(\Pi_\omega(\Sigma)).
\]
The thesis follows by taking the infimum over the admissible trial functions $u$.
\end{proof}

\section{Proof of Theorem \ref{teo:main}}
\label{sec:5}

	Without loss of generality, we can assume $r_\Omega =1$. As in the proof of Lemma \ref{lm:rettangolo distante}, we consider the natural number $\delta=\lfloor\sqrt{k}\rfloor+1$ and take the family of squares $\{\mathcal{Q}_{i,j}\}_{(i,j)\in\mathbb{Z}^2}\subseteq\mathbb{R}^2$ given by
	\[
	\mathcal{Q}_{i,j}:=Q_{5\,\delta}(10\,\delta\,i,10\,\delta\,j),\qquad \mbox{ for }(i,j)\in\mathbb{Z}^2.
	\]
	We observe that they form a tiling of the whole plane, more precisely they are pairwise disjoint and the union of their closures covers the whole $\mathbb{R}^2$. We also introduce the set of indexes 
\[
\mathbb{Z}^2_\Omega=\Big\{(i,j)\in\mathbb{Z}^2\, :\, \mathcal{Q}_{i,j}\cap \Omega\neq\emptyset \Big\},	
\]
and for every $(i,j)\in\mathbb{Z}^2_\Omega$, we indicate by $\Sigma_{i,j}\subset \overline{\mathcal{Q}_{i,j}}\setminus\Omega$ the compact set provided by Lemma \ref{lm:rettangolo distante}. By using the tiling properties of these squares, for a function $u\in C^\infty_0(\Omega)$ we have
	\[
	\begin{split}
		[u]^2_{W^{s,2}(\mathbb{R}^2)}&=\sum_{(i,j)\in\mathbb{Z}^2}\iint_{\mathcal{Q}_{i,j}\times \mathbb{R}^2}\frac{|u(x)-u(y)|^2}{|x-y|^{2+2\,s}}\,dx\,dy
		\\
		&\ge \sum_{(i,j)\in\mathbb{Z}^2}\iint_{\mathcal{Q}_{i,j}\times\mathcal{Q}_{i,j}}\frac{|u(x)-u(y)|^2}{|x-y|^{2+2\,s}}\,dx\,dy= \sum_{(i,j)\in\mathbb{Z}^2_\Omega} [u]^2_{W^{s,2}(\mathcal{Q}_{i,j})}.
	\end{split}
	\]	
For every $(i,j)\in\mathbb{Z}^2_\Omega$, we can use the fractional Poincar\'e inequality of Proposition \ref{lm:poincare_cap}, with the choices 
\[
r=5\,\delta\qquad \mbox{ and }\qquad R=2\,r=10\,\delta.
\] 
By setting for brevity $\mathcal{B}_{i,j}:=B_{10\,\delta}(10\,\delta\,i,10\,\delta j)$, this leads to
\[
[u]^2_{W^{s,2}(\mathcal{Q}_{i,j})}\ge \left[\frac{1}{50\,\delta^2}\,\phi\left(2,2\right)\right]\,\widetilde{\mathrm{cap}}_s\Big(\Sigma_{i,j};\mathcal{B}_{i,j}\Big)\,\|u\|^2_{L^2(\mathcal{Q}_{ij})},\qquad \mbox{ for every } (i,j)\in\mathbb{Z}^2_\Omega,
\]
where we also used that $s>1/2$. We now have to estimate from below the relative fractional capacity of each compact set $\Sigma_{i,j}$. By combining
Lemma \ref{lm:rettangolo distante} and Proposition \ref{prop: capacità 2d VS capacità 1d}, we have
\[
\begin{split}
\widetilde{\mathrm{cap}}_s(\Sigma_{i,j};\mathcal{B}_{i,j})
	&\ge \left(50\,(2-\sqrt{2})\right)^\frac{1-2\,s}{2}\,\frac{\mathfrak{m}_s}{\mathcal{A}}\,\delta^{1-2\,s}\, \max\Big\{\mathcal{H}^1(\Pi_{\mathbf{e}_1}(\Sigma_{i,j})),\, \mathcal{H}^1(\Pi_{\mathbf{e}_2}(\Sigma_{i,j}))\Big\}\\
	&	\ge \left(50\,(2-\sqrt{2})\right)^\frac{1-2\,s}{2}\,\frac{\mathfrak{m}_s}{4\,\mathcal{A}}\,\delta^{1-2\,s}\,\sqrt{k}.
\end{split}
\]
By collecting the estimates above, we obtain
\begin{equation}
\label{quasifinito}
	\begin{split}
	[u]^2_{W^{s,2}(\mathbb{R}^2)}&\ge \left(50\,(2-\sqrt{2})\right)^\frac{1-2\,s}{2}\,\frac{\mathfrak{m}_s\,\phi(2,2)}{200\,\mathcal{A}}\,\sqrt{k}\,\delta^{-1-2\,s}\,\sum_{(i,j)\in\mathbb{Z}^2_\Omega}\|u\|^2_{L^2(\mathcal{Q}_{ij})}\\
	&=\left(50\,(2-\sqrt{2})\right)^\frac{1-2\,s}{2}\,\frac{\mathfrak{m}_s\,\phi(2,2)}{200\,\mathcal{A}}\,\sqrt{k}\,\delta^{-1-2\,s}\,\|u\|^2_{L^2(\Omega)},
	\end{split}
\end{equation}	
where the last identity follows by the tiling property of the family $\{\mathcal{Q}_{ij}\}_{i,j}$. By recalling the definition of $\delta$ and using \eqref{mantissa}, we get
	\[
	\sqrt{k}\,\delta^{-1-2\,s}\ge \sqrt{k}\,\left(\sqrt{k}+1\right)^{-1-2\,s}\ge\frac{1}{2^{1+2\,s}}\,\frac{1}{k^s}.
	\]
By the arbitrariness of $u\in C^\infty_0(\Omega)$, from \eqref{quasifinito} we get the claimed lower bound on $\lambda_1^s(\Omega)$, with
\[
\vartheta_s=\frac{\left(50\,(2-\sqrt{2})\right)^\frac{1-2\,s}{2}}{2^{1+2\,s}}\,\frac{\mathfrak{m}_s\,\phi(2,2)}{200\,\mathcal{A}}.
\]
Finally, the claimed asymptotic behaviour of $\vartheta_s$ simply follows from its definition and the properties of $\mathfrak{m}_s$, encoded in Theorem \ref{teo:MC}.

\section{Proof of Theorem \ref{teo:optimal}}
\label{sec:6}

\subsection{Proof of point (1)}
This is a straightforward consequence of the {\it Bourgain-Brezis-Mironescu formula}. Indeed, for every $\Omega\subseteq\mathbb{R}^2$ open set, let $u\in C^\infty_0(\Omega)\setminus\{0\}$. Then by \cite[Corollary 3.20]{EE} we have 
\[
\lim_{s\nearrow 1} (1-s)\,[u]_{W^{s,2}(\mathbb{R}^2)}^2=\frac{1}{2}\, \int_\Omega|\nabla u|^2\,dx.
\]
This implies that 
\[
\limsup_{s\nearrow 1} (1-s)\,\lambda_1^s(\Omega)\le \lim_{s\nearrow 1} \frac{(1-s)\,[u]_{W^{s,2}(\mathbb{R}^N)}^2}{\|u\|_{L^2(\Omega)}^2}=\frac{1}{2}\, \frac{\displaystyle\int_\Omega|\nabla u|^2\,dx}{\|u\|_{L^2(\Omega)}^2}.
\]
By taking the infimum over $C^\infty_0(\Omega)\setminus\{0\}$, we get
\[
\limsup_{s\nearrow 1} (1-s)\,\lambda_1^s(\Omega)\le \frac{1}{2}\,\lambda_1(\Omega),
\]
as claimed. Thus, by multiplying both sides of \eqref{fracOTC} by the factor $(1-s)$, using the previous property and the asymptotic behaviour of $\vartheta_s$, we get back the classical Croke-Osserman-Taylor estimate, in the limit as $s$ goes to $1$.

\subsection{Proof of point (2)}
We need at first the following 
\begin{lemma}
\label{lm:capacitapunti}
Let $0<s<1$ and let $\Omega\subseteq\mathbb{R}^2$ be an open set. Then for every $\{x_0,\dots,x_m\}\subset \Omega$, we have 
\[
\lambda_1^s\big(\Omega\setminus\{x_0,\dots,x_m\}\big)=\lambda_1^s(\Omega).
\]
\end{lemma}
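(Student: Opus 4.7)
My plan is as follows. The inequality
\[
\lambda_1^s(\Omega\setminus\{x_0,\dots,x_m\})\ge\lambda_1^s(\Omega)
\]
is automatic, since $C^\infty_0(\Omega\setminus\{x_0,\dots,x_m\})\subseteq C^\infty_0(\Omega)$. The substance is the reverse inequality, which I would obtain by showing that every $u\in C^\infty_0(\Omega)$ can be approximated, with no loss in the Rayleigh quotient, by functions that vanish in neighborhoods of the removed points. The key geometric fact is that in dimension $N=2$ a single point has vanishing $(s,2)$-capacity whenever $2s<2$, i.e. for every $0<s<1$.

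More concretely, for each $x_i$ I would pick a scaled Lipschitz cutoff $\eta^i_\varepsilon$ with $0\le\eta^i_\varepsilon\le 1$, $\eta^i_\varepsilon\equiv 1$ on $B_{\varepsilon/2}(x_i)$, $\mathrm{supp}\,\eta^i_\varepsilon\subset B_\varepsilon(x_i)\subset\Omega$, and $\|\nabla\eta^i_\varepsilon\|_{L^\infty}\le C/\varepsilon$. A standard scaling computation gives
\[
\|\eta^i_\varepsilon\|^2_{L^2(\mathbb{R}^2)}\le C\,\varepsilon^2,\qquad [\eta^i_\varepsilon]^2_{W^{s,2}(\mathbb{R}^2)}\le C\,\varepsilon^{2-2s},
\]
both vanishing as $\varepsilon\to 0$ (using $s<1$). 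Setting $\psi_\varepsilon=\max_i\eta^i_\varepsilon$ and $u_\varepsilon=u\,(1-\psi_\varepsilon)$, the function $u_\varepsilon$ is Lipschitz, compactly supported in $\Omega$, and identically zero near each $x_i$; by Remark \ref{oss:lipschitz}-type density (mollification keeping supports away from the $x_i$), $u_\varepsilon$ belongs to $\widetilde{W}^{s,2}_0(\Omega\setminus\{x_0,\dots,x_m\})$.

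The only real work is showing
\[
\|u_\varepsilon\|_{L^2}\to\|u\|_{L^2},\qquad [u_\varepsilon]_{W^{s,2}(\mathbb{R}^2)}\to[u]_{W^{s,2}(\mathbb{R}^2)}.
\]
The $L^2$ part is immediate from $\|u\psi_\varepsilon\|_{L^2}\le\|u\|_{L^\infty}\|\psi_\varepsilon\|_{L^2}\to 0$. For the seminorm, expand
\[
[u_\varepsilon]^2_{W^{s,2}}=[u]^2_{W^{s,2}}-2\,\mathcal{E}(u,u\psi_\varepsilon)+[u\psi_\varepsilon]^2_{W^{s,2}},
\]
with $\mathcal{E}$ the bilinear form associated to the seminorm. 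By Cauchy--Schwarz, it is enough to show $[u\psi_\varepsilon]_{W^{s,2}}\to 0$. Using
\[
|u(x)\psi_\varepsilon(x)-u(y)\psi_\varepsilon(y)|\le\psi_\varepsilon(x)\,|u(x)-u(y)|+\|u\|_{L^\infty}\,|\psi_\varepsilon(x)-\psi_\varepsilon(y)|,
\]
one has $[u\psi_\varepsilon]^2_{W^{s,2}}\le 2\|u\|^2_{L^\infty}[\psi_\varepsilon]^2_{W^{s,2}}+2\iint\frac{\psi_\varepsilon(x)^2|u(x)-u(y)|^2}{|x-y|^{2+2s}}\,dx\,dy$. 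The first piece vanishes by construction. For the second, I would split the integration region at $|x-y|=1$: on $\{|x-y|<1\}$, bound $|u(x)-u(y)|^2\le\|\nabla u\|^2_{L^\infty}|x-y|^2$ to get a finite integral in $y$ and conclude it is $\lesssim\|\psi_\varepsilon\|^2_{L^2}\to 0$; on $\{|x-y|\ge 1\}$, bound $|u(x)-u(y)|^2\le 4\|u\|^2_{L^\infty}$ and again conclude the same way.

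The only delicate point I anticipate is the second integral above: one must resist the temptation to pull $\|u\|_{L^\infty}$ or $\|\nabla u\|_{L^\infty}$ out globally, because the inner integral in $y$ would diverge at infinity; the two-regime splitting is what rescues the argument, and it genuinely uses both the smoothness of $u$ and the smallness of $\psi_\varepsilon$ in $L^2$. Once this is done, dividing through yields
\[
\lambda_1^s(\Omega\setminus\{x_0,\dots,x_m\})\le\lim_{\varepsilon\to 0}\frac{[u_\varepsilon]^2_{W^{s,2}}}{\|u_\varepsilon\|^2_{L^2}}=\frac{[u]^2_{W^{s,2}}}{\|u\|^2_{L^2}},
\]
and taking the infimum over $u\in C^\infty_0(\Omega)$ closes the proof.
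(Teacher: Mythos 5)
Your proposal is correct, and its engine is the same as the paper's: multiply $u\in C^\infty_0(\Omega)$ by a cut-off equal to $1$ outside small balls around the removed points, and use the scaling $[\eta(\cdot/\varepsilon)]^2_{W^{s,2}(\mathbb{R}^2)}=\varepsilon^{2-2s}[\eta]^2_{W^{s,2}(\mathbb{R}^2)}\to 0$ (i.e.\ points are negligible for $s<1$ in dimension $2$) together with the elementary $L^2$ convergence. Where you diverge is in the handling of the seminorm of the product: the paper simply applies the Minkowski/Leibniz-type bound $[u\,(1-\Psi_\varepsilon)]_{W^{s,2}}\le \|1-\Psi_\varepsilon\|_{L^\infty}\,[u]_{W^{s,2}}+\|u\|_{L^\infty}\,[\Psi_\varepsilon]_{W^{s,2}}$, which gives an upper bound on the Rayleigh quotient whose limit as $\varepsilon\to 0$ is exactly $[u]^2/\|u\|^2_{L^2}$ --- no bilinear-form expansion and no estimate of the mixed term $\iint \psi_\varepsilon(x)^2\,|u(x)-u(y)|^2\,|x-y|^{-2-2s}\,dx\,dy$ is needed, because only a bound on the $\limsup$ is required, not convergence of the seminorm. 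Your route, expanding $[u_\varepsilon]^2$ and proving $[u\,\psi_\varepsilon]_{W^{s,2}}\to 0$ via the two-regime splitting at $|x-y|=1$, is sound (the splitting is indeed the right fix, since neither $\|u\|_{L^\infty}$ nor $\|\nabla u\|_{L^\infty}$ can be pulled out globally) and yields the slightly stronger statement $[u_\varepsilon]_{W^{s,2}}\to[u]_{W^{s,2}}$, at the price of more work. Two further small differences: the paper uses a smooth cut-off $\eta\in C^\infty_0(B_1)$ and the sum $\Psi_\varepsilon=\sum_i\eta((\cdot-x_i)/\varepsilon)$, so the trial function lies directly in $C^\infty_0(\Omega\setminus\{x_0,\dots,x_m\})$ and no mollification/density step is needed, whereas your Lipschitz cut-offs require that extra (standard) approximation; and for your $\psi_\varepsilon=\max_i\eta^i_\varepsilon$ you should note that for $\varepsilon$ below half the minimal mutual distance the supports are disjoint, so the max coincides with the sum and $[\psi_\varepsilon]_{W^{s,2}}$ is controlled by Minkowski (or by submodularity), which is what makes "the first piece vanishes by construction" legitimate.
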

\begin{proof}
We may suppose that the points $\{x_0,\dots,x_m\}$ are distinct. We first observe that 
\[
\lambda_1^s\big(\Omega\setminus\{x_0,\dots,x_m\}\big)\ge\lambda_1^s(\Omega),
\]
since $\Omega\setminus\{x_0,\dots,x_m\}\subset \Omega$. In order to prove the converse implication, we set
\[
\varepsilon_0=\frac{1}{2}\,\min_{i,j\in\{0,m\}} \Big\{|x_i-x_j|\, :\, i\not=j\Big\}.
\] 
Then we take a cut-off function $\eta\in C^\infty_0(B_1)$ such that 
\[
\eta\equiv 1 \mbox{ in } B_\frac{1}{2},\qquad 0\le \eta\le 1,\qquad |\nabla \eta|\le C,
\] 
and  define for every $0<\varepsilon<\varepsilon_0$
\[
\Psi_\varepsilon(x)=\sum_{i=0}^m \eta\left(\frac{x-x_i}{\varepsilon}\right).
\]
We now take $u\in C^\infty_0(\Omega)\setminus\{0\}$ and observe that $u\,(1-\Psi_\varepsilon)$ is a feasible trial function for the variational problem which defines $\lambda_1^s(\Omega\setminus\{x_0,\dots,x_m\})$. Thus, by using Minkowski's inequality, we get for every $0<\varepsilon<\varepsilon_0$
	\begin{equation}
	\label{eq: radice lambda}
	\begin{split}
		\sqrt{\lambda_1^s(\Omega\setminus\{x_0,\dots,x_m\})}&\le\frac{\big[u\,(1-\Psi_{\varepsilon})\big]_{W^{s,2}(\mathbb{R}^2)}}{\big\|u\,(1-\Psi_{\varepsilon})\big\|_{L^2(\Omega)}}\\
				&\le \frac{[u]_{W^{s,2}(\mathbb{R}^2)}\,\|1-\Psi_{\varepsilon}\|_{L^\infty(\mathbb{R}^2)}+\|u\|_{L^\infty(\mathbb{R}^2)}\,[\Psi_{\varepsilon}]_{W^{s,2}(\mathbb{R}^2)} }{\|u\,(1-\Psi_{\varepsilon})\|_{L^2(\Omega)}}\\
				&=\frac{[u]_{W^{s,2}(\mathbb{R}^2)}+\|u\|_{L^\infty(\mathbb{R}^2)}\,[\Psi_{\varepsilon}]_{W^{s,2}(\mathbb{R}^2)} }{\|u\,(1-\Psi_{\varepsilon})\|_{L^2(\Omega)}}.
				\end{split}
				\end{equation}
			By applying the Dominated Convergence Theorem, we easily get that
			\[
			\lim_{\varepsilon\to 0}\|u\,(1-\Psi_{\varepsilon})\|_{L^2(\Omega)}=\|u\|_{L^2(\Omega)}.
			\]	
			As for the second term in the numerator, we observe that by Minkowski's inequality again, we have
			\[
			[\Psi_{\varepsilon}]_{W^{s,2}(\mathbb{R}^2)}=\left[\sum_{i=0}^m \eta\left(\frac{\cdot-x_i}{\varepsilon}\right)\right]_{W^{s,2}(\mathbb{R}^2)}\le (m+1)\,\varepsilon^{1-s}\,[\eta]_{W^{s,2}(\mathbb{R}^2)}.
			\] 
			We also used the scaling properties of the fractional seminorm. This in turn implies that 
			\[
			\lim_{\varepsilon\to 0}[\Psi_{\varepsilon}]_{W^{s,2}(\mathbb{R}^2)}=0.
			\]
			Thus, by taking the limit as $\varepsilon$ goes to $0$ in \eqref{eq: radice lambda}, we end up with
			\[
			\sqrt{\lambda_1^s(\Omega\setminus\{x_0,\dots,x_m\})}\le \frac{[u]_{W^{s,2}(\mathbb{R}^2)}}{\|u\|_{L^2(\Omega)}}.
			\]
			By arbitrariness of $u$, we get the desired conclusion.
\end{proof}
\begin{oss}
The previous result is a particular case of the following general fact: removing sets with zero fractional capacity does not alter the relevant fractional Sobolev space. Consequently, fractional Poincar\'e constants are insensitive to removal of these sets. We refer for example to \cite[Proposition 2.6 \& Corollary 2.7]{AFN} for this general result.
\end{oss}
The sequence $\{\Omega_k\}_{k\in\mathbb{N}\setminus\{0,1\}}$ is then constructed as follows: for every $k\in\mathbb{N}\setminus\{0,1\}$, we set
\[
n_k=\lfloor \sqrt{k-1}\rfloor\qquad \mbox{ and }\qquad m_k=(k-1)-n_k^2.
\]
Then, we take the set 
\[
\mathrm{Shell}_k=\left(\Big[0,n_k\Big]\times\Big[0,n_k\Big]\right)\setminus \bigcup_{i,j=0}^{n_k-1}\left\{\left(i+\frac{1}{2},j+\frac{1}{2}\right)\right\}, \mbox{ for }k\ge 2,
\]
which consists of a square with $n_k^2$ equally spaced points removed. More precisely, we remove the centers of the squares
\[
[i,i+1]\times[j,j+1],\qquad \mbox{ for } i,j=0,\dots,n_k-1.
\] 
We also introduce the set
\[
\mathrm{Slug}_k=\Big([0,m_k]\times[-1,0]\Big)\setminus \bigcup_{i=0}^{m_k-1}\left\{\left(i+\frac{1}{2},-\frac{1}{2}\right)\right\},
\]
which consists of an horizontal strip of width $1$ and length $m_k$, from which we removed the centers of the squares
\[
[i,i+1]\times[-1,0],\qquad \mbox{ for } i=0,\dots,m_k-1.
\]
	\begin{figure}
\includegraphics[scale=.3]{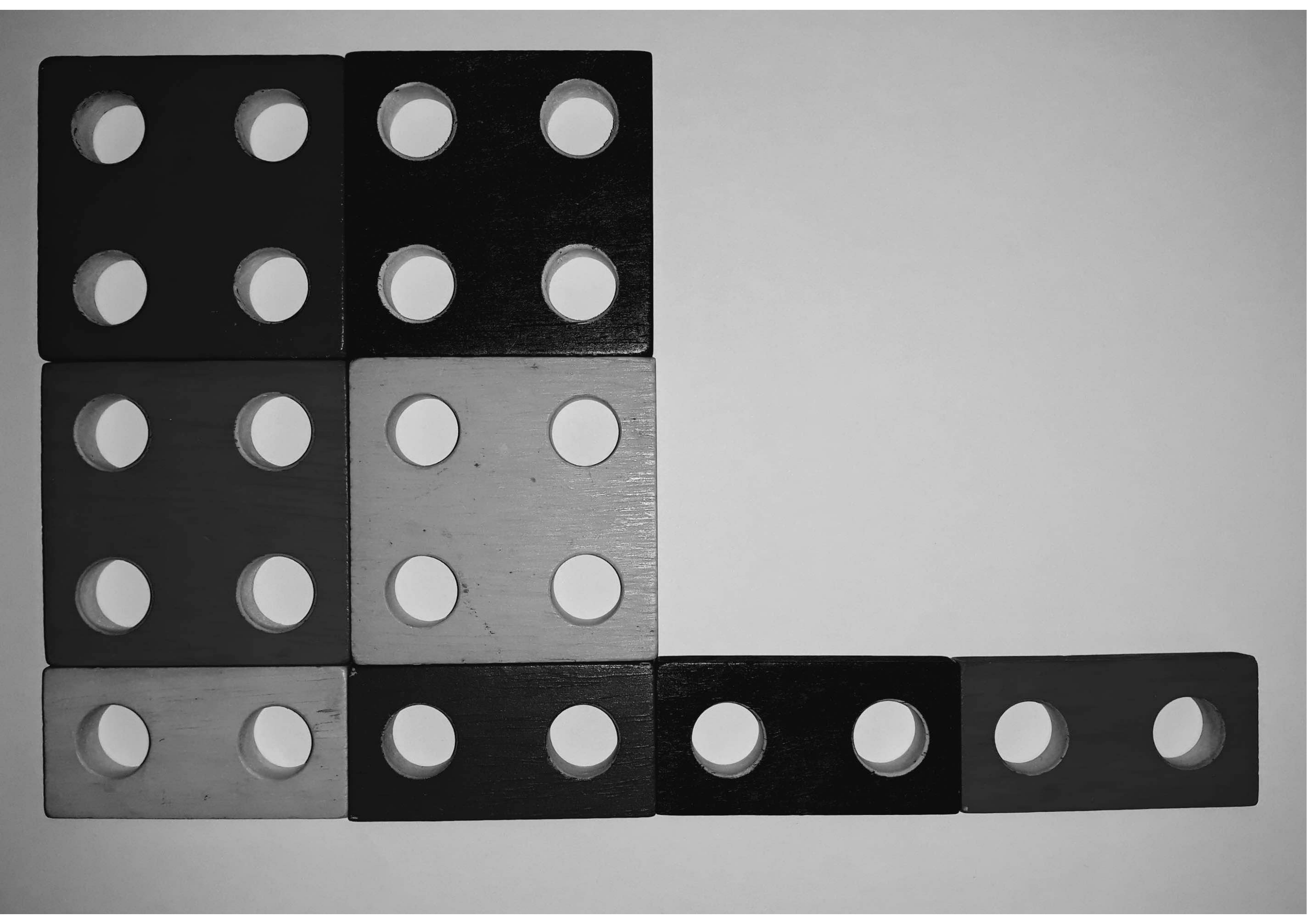}
\caption{The set $\Omega_k$ of Theorem \ref{teo:optimal} point (2), for $k=25$.}
\label{fig:lumaca}
\end{figure}
Finally, we define the open bounded set
\[
\Omega_k=\mathrm{int}\big(\mathrm{Shell}_k\cup \mathrm{Slug}_k\big),\qquad \mbox{ for every } k\ge 2,
\]
i.\,e. the interior points of the union of $\mathrm{Shell}_k$ and $\mathrm{Slug}_k$ (see Figure \ref{fig:lumaca}). By construction, we have that $\Omega_k$ is multiply connected of order $k$. Moreover, we have 
\[
r_{\Omega_k}\le \frac{\sqrt{2}}{2},\qquad \mbox{ for every } k\ge 2,
\]
and
\[
\Omega_k\supseteq \mathrm{int}\big(\mathrm{Shell}_k\big)=\left(\Big(0,n_k\Big)\times\Big(0,n_k\Big)\right)\setminus \bigcup_{i,j=0}^{n_k-1}\left\{\left(i+\frac{1}{2},j+\frac{1}{2}\right)\right\}.
\]
By using the monotonicity of $\lambda_1^s$ with respect to set inclusion and Lemma \ref{lm:capacitapunti} for $\mathrm{int}\big(\mathrm{Shell}_k\big)$, we can then infer
\[
\lambda_1^s(\Omega_k)\le \lambda_1^s\left(\Big(0,n_k\Big)\times\Big(0,n_k\Big)\right)=n_k^{-2\,s}\,\lambda_1^s\left(\Big(0,1\Big)\times\Big(0,1\Big)\right).
\] 	
By recalling the definition of $n_k$, this finally gives the desired result.

\subsection{Proof of point (3)} We divide the proof in various steps, for ease of presentation.
\vskip.2cm\noindent
{\it Step 1: construction of the set}.
We define
\[
{\Sigma}=\bigcup\limits_{i\in\mathbb{Z}}{\Sigma}^{(i)},\qquad \mbox{ where }\, \Sigma^{(i)}:=\left\{(x_1,i)\in\mathbb{R}^2\,:\,|x_1|\ge1\right\},
\] 
and then consider the {\it infinite complement comb} 
\[
\Theta:=\mathbb{R}^2\setminus \Sigma,
\]
as in \cite[Section 5]{BB}. The set $\Theta_k$ of the statement is then constructed by simply removing $k-1$ distinct points from $\Theta$, i.e. we set
\[
\Theta_k=\Theta\setminus\{(0,i)\, :\, i=1,\dots,k-1\},
\]
see Figure \ref{fig:pettinone}.
\begin{figure}
\includegraphics[scale=.3]{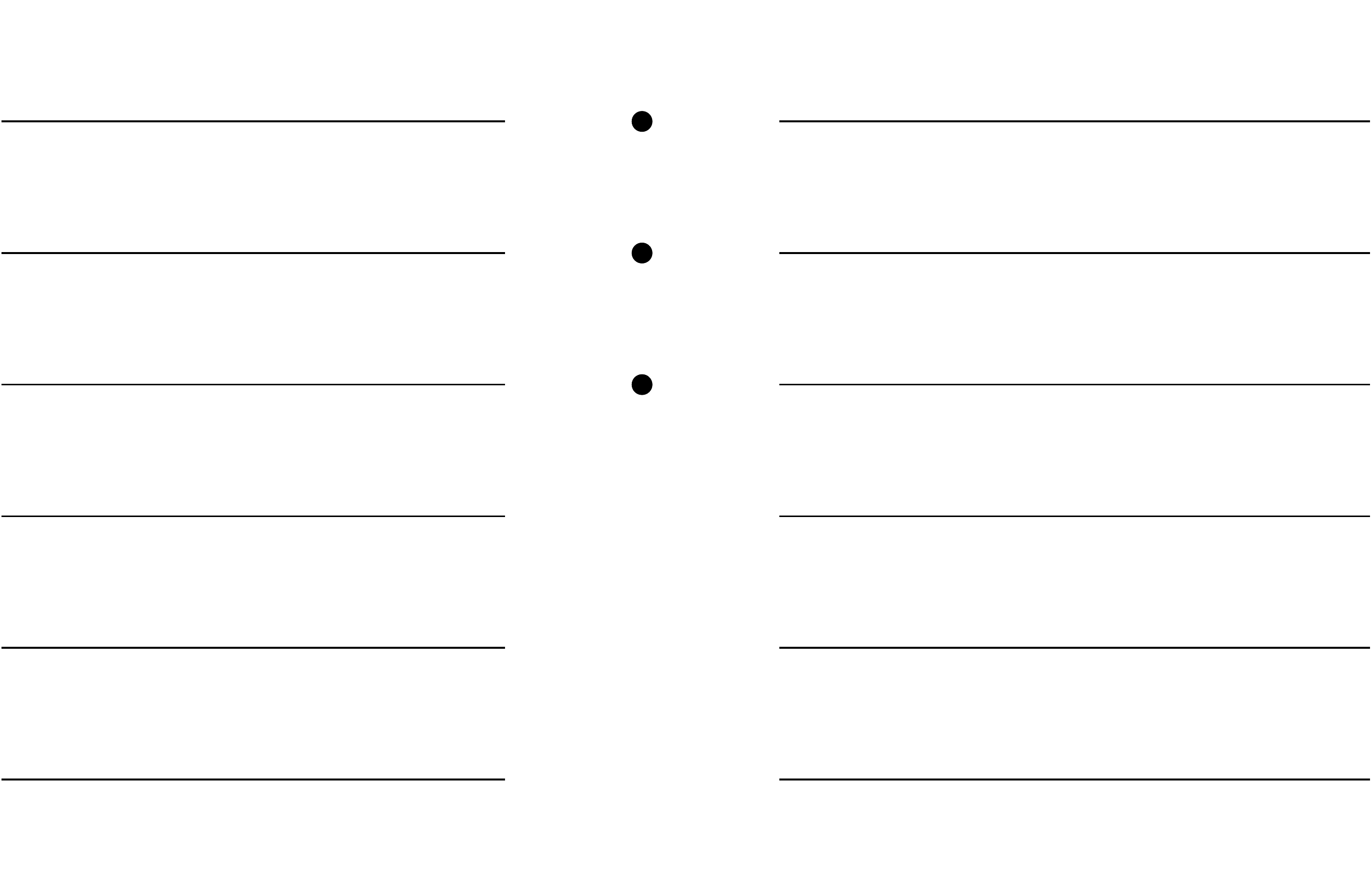}
\caption{The set $\Theta_k$ for $k=4$ of Theorem \ref{teo:optimal} point (3): it is has been obtained by removing the black dots from $\Theta$.}
\label{fig:pettinone}
\end{figure}
By construction, we have that $\Theta_k$ is multiply connected of order $k$, with finite inradius. Thus, by Theorem \ref{teo:main} we have $\lambda_1^s(\Theta_k)>0$, for every $s>1/2$. 
We claim that 
\begin{equation}
\label{step1}
\limsup_{s\searrow \frac{1}{2}} \frac{\lambda_1^s(\Theta_k)}{2\,s-1}<+\infty.
\end{equation}
{\it Step 2: one-dimensional reduction}. Here we need the following result.
\begin{lemma}
Let $0<s<1$ and let $A\subseteq\mathbb{R}$ be an open set. Then we have
\begin{equation}
\label{alphas}
\lambda_1^s(A\times\mathbb{R})\le \alpha_s\,\lambda_1^s(A),\qquad \mbox{ where }\alpha_s=\int_\mathbb{R} \frac{d t}{(1+t^2)^\frac{2+2\,s}{2}}.
\end{equation}
\end{lemma}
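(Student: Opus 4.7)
The plan is to build an admissible tensor-product test function for $\lambda_1^s(A\times\mathbb{R})$ and exploit that the ``vertical length scale'' can be sent to infinity to make the $\phi$-factor ineffective in the Rayleigh quotient. Concretely, fix $u\in C^\infty_0(A)\setminus\{0\}$ and $\phi\in C^\infty_0(\mathbb{R})\setminus\{0\}$, and for $R>0$ set $\phi_R(x_2)=\phi(x_2/R)$ and $v_R(x_1,x_2)=u(x_1)\,\phi_R(x_2)\in C^\infty_0(A\times\mathbb{R})$. The identity
\[
v_R(x)-v_R(y)=[u(x_1)-u(y_1)]\,\phi_R(x_2)+u(y_1)\,[\phi_R(x_2)-\phi_R(y_2)],
\]
combined with $(a+b)^2\le (1+\varepsilon)\,a^2+(1+\varepsilon^{-1})\,b^2$, gives $[v_R]^2_{W^{s,2}(\mathbb{R}^2)}\le (1+\varepsilon)\,I_1+(1+\varepsilon^{-1})\,I_2$, where $I_1$ carries $[u(x_1)-u(y_1)]^2\phi_R(x_2)^2$ against the $2$-dimensional kernel and $I_2$ carries $u(y_1)^2\,[\phi_R(x_2)-\phi_R(y_2)]^2$.

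The main computation is for $I_1$: for fixed $x_1,y_1,x_2$, I integrate the kernel in $y_2$ via the substitution $y_2=x_2+|x_1-y_1|\,\tau$, obtaining
\[
\int_{\mathbb{R}}\frac{dy_2}{\bigl(|x_1-y_1|^2+(x_2-y_2)^2\bigr)^{(2+2s)/2}}=\frac{\alpha_s}{|x_1-y_1|^{1+2s}},
\]
which is precisely how $\alpha_s$ enters the picture. A Fubini argument then gives $I_1=\alpha_s\,\|\phi_R\|_{L^2(\mathbb{R})}^2\,[u]^2_{W^{s,2}(\mathbb{R})}$. The analogous integration in $x_1$ inside $I_2$ yields $I_2=\alpha_s\,\|u\|_{L^2(A)}^2\,[\phi_R]^2_{W^{s,2}(\mathbb{R})}$. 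Elementary scaling gives $\|\phi_R\|_{L^2(\mathbb{R})}^2=R\,\|\phi\|_{L^2(\mathbb{R})}^2$ and $[\phi_R]^2_{W^{s,2}(\mathbb{R})}=R^{1-2s}\,[\phi]^2_{W^{s,2}(\mathbb{R})}$, while $\|v_R\|_{L^2(A\times\mathbb{R})}^2=R\,\|u\|_{L^2(A)}^2\,\|\phi\|_{L^2(\mathbb{R})}^2$.

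Putting everything together,
\[
\frac{[v_R]^2_{W^{s,2}(\mathbb{R}^2)}}{\|v_R\|_{L^2(A\times\mathbb{R})}^2}\le (1+\varepsilon)\,\alpha_s\,\frac{[u]^2_{W^{s,2}(\mathbb{R})}}{\|u\|_{L^2(A)}^2}+(1+\varepsilon^{-1})\,\alpha_s\,R^{-2s}\,\frac{[\phi]^2_{W^{s,2}(\mathbb{R})}}{\|\phi\|_{L^2(\mathbb{R})}^2}.
\]
Since $s>0$, the second summand vanishes as $R\to\infty$. Letting $R\to\infty$, then $\varepsilon\searrow 0$, and finally taking the infimum over $u\in C^\infty_0(A)\setminus\{0\}$ yields \eqref{alphas}. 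The only mildly delicate point is the Fubini bookkeeping that isolates $\alpha_s$ as the exact constant; once the one-dimensional substitution above is performed the remainder is algebraic, and the favorable $R^{-2s}$ decay of the error is transparent from the scaling of the fractional seminorm.
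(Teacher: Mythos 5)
Your proof is correct and follows essentially the same route as the paper: a tensor-product trial function, the same splitting of $v(x)-v(y)$, and the same Fubini/one-variable integration that produces the exact constant $\alpha_s$. The only cosmetic differences are that you use Young's inequality with a parameter $\varepsilon$ instead of Minkowski's inequality, and you dispose of the one-dimensional factor by an explicit dilation $\phi_R$ with the scaling $R^{-2s}\to0$, whereas the paper quotes $\lambda_1^s(\mathbb{R})=0$ and takes the infimum over $\varphi$ — the same mechanism in disguise.
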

\begin{proof}
We proceed as in the proof of \cite[Lemma 2.4]{FS}.	For every $x\in\mathbb{R}^2$, we will use the notation $x=(x_1,x_2)$. We take $u\in C^\infty_0(A)\setminus\{0\}$ and $\varphi\in C^\infty_0(\mathbb{R})\setminus\{0\}$. We first observe that by Fubini's Theorem
	\[
	\|u\,\varphi\|_{L^2(A\times\mathbb{R})}=\|u\|_{L^2(A)}\,\|\varphi\|_{L^2(\mathbb{R})}.
	\]
We then estimate the fractional seminorm of $u\,\varphi$. By Minkowksi's inequality, we have
\[
\begin{split}
[u\,\varphi]_{W^{s,2}(\mathbb{R}^2)}&\le \left(\iint_{\mathbb{R}^{2}\times\mathbb{R}^{2}} |u(x_1)|^2\,\dfrac{|\varphi(y_1)-\varphi(y_2)|^2}{|x-y|^{2+2\,s}}\,dx\,dy\right)^{1/2}\\
&+\left(\iint_{\mathbb{R}^{2}\times\mathbb{R}^{2}} |\varphi(y_2)|^2\,\dfrac{|u(x_1)-u(x_2)|^2}{|x-y|^{2+2\,s}}\,dx\,dy\right)^{1/2}.
\end{split}
\]	
By using Fubini's Theorem, we have
\[
\begin{split}
\iint_{\mathbb{R}^{2}\times\mathbb{R}^{2}}& |u(x_1)|^2\,\dfrac{|\varphi(y_1)-\varphi(y_2)|^2}{|x-y|^{2+2\,s}}\,dx\,dy\\
&=\int_\mathbb{R} |u(x_1)|^2\,\left(\iint_{\mathbb{R}\times\mathbb{R}} |\varphi(y_1)-\varphi(y_2)|^2\left(\int_\mathbb{R} \frac{dx_2}{\left((x_1-x_2)^2+(y_1-y_2)^2\right)^\frac{2+2\,s}{2}}\right)\,d y_1\,dy_2\right)\,dx_1.
\end{split}
\]
By using a changing of variable, we get
\[
\int_\mathbb{R} \frac{dx_2}{\left((x_1-x_2)^2+(y_1-y_2)^2\right)^\frac{2+2\,s}{2}}=\frac{\alpha_s}{|y_1-y_2|^{1+2\,s}}.
\]
Thus we obtain
\[
\iint_{\mathbb{R}^{2}\times\mathbb{R}^{2}} |u(x_1)|^2\,\dfrac{|\varphi(y_1)-\varphi(y_2)|^2}{|x-y|^{2+2\,s}}\,dx\,dy=\alpha_s\,\|u\|^2_{L^2(A)}\,[\varphi]^2_{W^{s,2}(\mathbb{R})}.
\]
With a similar computation, we also get
\[
\iint_{\mathbb{R}^{2}\times\mathbb{R}^{2}} |\varphi(y_2)|^2\,\dfrac{|u(x_1)-u(x_2)|^2}{|x-y|^{2+2\,s}}\,dx\,dy=\alpha_s\,\|\varphi\|^2_{L^2(\mathbb{R})}\,[u]^2_{W^{s,2}(\mathbb{R})}.
\]
Thus, from the variational definition of $\lambda_1^s(A\times\mathbb{R})$, we get
\[
\begin{split}
\sqrt{\lambda_1^s(A\times\mathbb{R})}\le \frac{[u\,\varphi]_{W^{s,2}(\mathbb{R}^2)}}{\|u\,\varphi\|_{L^2(\omega\times\mathbb{R})}}&\le \sqrt{\alpha_s}\, \frac{\|u\|_{L^2(A)}\,[\varphi]_{W^{s,2}(\mathbb{R})}+\|\varphi\|_{L^2(\mathbb{R})}\,[u]_{W^{s,2}(\mathbb{R})}}{\|u\|_{L^2(A)}\,\|\varphi\|_{L^2(\mathbb{R})}}\\
&=\sqrt{\alpha_s}\,\left(\frac{[\varphi]_{W^{s,2}(\mathbb{R})}}{\|\varphi\|_{L^2(\mathbb{R})}}+\frac{[u]_{W^{s,2}(\mathbb{R})}}{\|u\|_{L^2(A)}}\right).
\end{split}
\]
By taking the infimum over $u$ and $\varphi$, recalling that $\lambda^s_1(\mathbb{R})=0$, we get the desired conclusion
\end{proof}
In particular, from the previous result with $A=\mathbb{R}\setminus\mathbb{Z}$, we get that 
\[
\lambda_1^s(\Theta_k)\le \lambda_1^s(\mathbb{R}\times(\mathbb{R}\setminus \mathbb{Z}))\le \alpha_s\,\lambda_1^s(\mathbb{R}\setminus\mathbb{Z}).
\]
In the first inequality we used that 
\[
\mathbb{R}\times(\mathbb{R}\setminus \mathbb{Z})\subset \Theta_k.
\]
From its definition \eqref{alphas}, it is easy to see that $\alpha_s$ various continuously with respect to $s\in[0,1]$. Thus, in order to prove \eqref{step1}, it will be sufficient to establish that 
\begin{equation}
\label{step2}
\limsup_{s\searrow \frac{1}{2}} \frac{\lambda_1^s(\mathbb{R}\setminus\mathbb{Z})}{2\,s-1}<+\infty.
\end{equation}
\noindent
{\it Step 3: choice of the trial functions}.
In order to prove \eqref{step2}, we will need to carefully construct a suitable family of $s-$depending trial functions, which provides an upper bound on $\lambda_1^s(\mathbb{R}\setminus\mathbb{Z})$ with the correct asymptotic behaviour. For every 
\[
n\in\mathbb{N}\setminus\{0\},\qquad s>1/2\qquad \mbox{ and }\qquad 0<\varepsilon<\frac{1}{10},
\] 
we consider the trial function $u_{n}\,\varphi_{n,s,\varepsilon}$,
where:
\begin{itemize}
\item $u_{n}\in C^\infty_0((-n,n))$ has the form
\[
u_{n}(x)=u\left(\frac{x}{n}\right),
\]
for some $u\in C^\infty_0((-1,1))$ such that $\|u\|_{L^2((-1,1))}=1$;
\vskip.2cm
\item the {\it multiple funnel--type} cut-off function $\varphi_{n,s,\varepsilon}\in W^{s,2}_{\rm loc}(\mathbb{R})\cap L^\infty(\mathbb{R})$ has the form
\[
\varphi_{n,s,\varepsilon}=1-\sum_{j=-n}^n \zeta_s\left(\frac{x-j}{\varepsilon}\right), 
\]
where $\zeta_s$ is the function given by
\[
\zeta_s(x)=\Big(1-|x|^{2\,s-1}\Big)_+.
\]
\end{itemize}
Thanks to \cite[Lemma 2.7]{BBZ}, we see that
\[
u_{n}\,\varphi_{n,s,\varepsilon}\in \widetilde{W}^{s,2}_0((-n,n))\subset \widetilde{W}^{s,2}_0(\mathbb{R}\setminus\mathbb{Z}).
\]
Thus it is a feasible trial function. By using again Minkowski's inequality, this yields
\[
\sqrt{\lambda_1^s(\mathbb{R}\setminus\mathbb{Z})}\le \frac{[u_n]_{W^{s,2}(\mathbb{R})}+\|u_n\|_{L^\infty((-n,n))}\,[\varphi_{n,\varepsilon,s}]_{W^{s,2}(\mathbb{R})}}{\|u_n\,\varphi_{n,\varepsilon,s}\|_{L^2((-n,n))}}.
\]
\noindent
{\it Step 4: estimate of the quotient.} Let us start by handling the terms at the numerator. We consider at first the terms containing $u_n$, which are simpler. 
By recalling the definition of $u_n$, we have
\[
[u_n]_{W^{s,2}(\mathbb{R})}=n^{\frac{1}{2}-s}\,[u]_{W^{s,2}(\mathbb{R})}.
\]
The last term can be estimated by using the interpolation inequality \cite[Corollary 2.2]{BPS}, which gives
\[
[u]_{W^{s,2}(\mathbb{R})}\le \sqrt{\frac{C}{s\,(1-s)}}\,\|u\|_{L^2((-1,1))}^{1-s}\,\|u'\|_{L^2((-1,1))}^{s},
\]
for some $C>0$ independent of $s$. This guarantees that we have 
\begin{equation}
\label{frankfurt1}
[u_n]_{W^{s,2}(\mathbb{R})}\le n^{\frac{1}{2}-s}\,\sqrt{\frac{C}{s\,(1-s)}}\,\|u'\|_{L^2((-1,1))}^{s}.
\end{equation}
The term with the $L^\infty$ norm is easy to handle, since we simply have
\begin{equation}
\label{frankfurt2}
\|u_n\|_{L^\infty((-n,n))}=\|u\|_{L^\infty((-1,1))}.
\end{equation}
The term containing the cut-off is the most delicate one. In order to estimate it, we observe that 
\[
\sum_{j=-n}^n \zeta_s\left(\frac{x-j}{\varepsilon}\right)=\max_{j=-n,\dots,n} \zeta_s\left(\frac{x-j}{\varepsilon}\right),
\] 
thanks to the fact that all the functions involved in the sum have disjoint support. We can then use the sub-modularity of the Sobolev-Slobodecki\u{\i} seminorm (see \cite[Theorem 3.2 \& Remark 3.3]{GM}) and obtain
\[
\begin{split}
[\varphi_{n,\varepsilon,s}]_{W^{s,2}(\mathbb{R})}&=\left[\sum_{j=-n}^n \zeta_s\left(\frac{\cdot-j}{\varepsilon}\right)\right]_{W^{s,2}(\mathbb{R})}\\&=\left[\max_{j=-n,\dots,n} \zeta_s\left(\frac{\cdot-j}{\varepsilon}\right)\right]_{W^{s,2}(\mathbb{R})}\\
&\le \left(\sum_{j=-n}^n\left[\zeta_s\left(\frac{\cdot-j}{\varepsilon}\right)\right]^2_{W^{s,2}(\mathbb{R})} \right)^\frac{1}{2}=\sqrt{2\,n+1}\,\varepsilon^{\frac{1}{2}-s}\,[\zeta_s]_{W^{s,2}(\mathbb{R})}.
\end{split}
\]
In order to conclude, the key fact is a very precise asymptotic estimate of the last term, as $s$ goes to $1/2$.  This is contained in Lemma \ref{lm:stimonaMFO} below, which permits to infer 
\begin{equation}
\label{frankfurt3}
[\varphi_{n,\varepsilon,s}]_{W^{s,2}(\mathbb{R})}\le C\,\sqrt{2\,n+1}\,\varepsilon^{\frac{1}{2}-s}\,\sqrt{2\,s-1}, \qquad \mbox{ for } \frac{1}{2}<s<\frac{3}{4},
\end{equation}
with $C>0$ not depending on $s$. 
\par
We now pass to examine the denominator. In this case, we have
\begin{equation}
\label{frankfurt4}
\begin{split}
\|u_n\,\varphi_{n,\varepsilon,s}\|_{L^2((-n,n))}&=n^\frac{1}{2}\,\left(\int_{-1}^1 |u(y)|^2\,\left(1-\sum_{j=-n}^n \zeta_s\left(\frac{n\,y-j}{\varepsilon}\right)\right)^2\,dy\right)^\frac{1}{2}\ge n^\frac{1}{2}\,\|u\|_{L^2(A_\varepsilon)},
\end{split}
\end{equation}
where
\[
A_\varepsilon=(-1,1)\setminus \bigcup\limits_{j=-n}^n \left(\frac{j-\varepsilon}{n},\frac{j+\varepsilon}{n}\right).
\]
\noindent
{\it Step 5: conclusion.}
By collecting the estimates \eqref{frankfurt1}, \eqref{frankfurt2}, \eqref{frankfurt3} and \eqref{frankfurt4}, we obtain
\[
\begin{split}
\sqrt{\frac{\lambda_1^s(\mathbb{R}\setminus\mathbb{Z})}{2\,s-1}}&\le \frac{\displaystyle n^{\frac{1}{2}-s}\,\sqrt{\frac{C}{s\,(1-s)}}\,\|u'\|_{L^2((-1,1))}^{s}+C\,\|u\|_{L^\infty((-1,1))}\,\sqrt{2\,n+1}\,\varepsilon^{\frac{1}{2}-s}\,\sqrt{2\,s-1}}{\displaystyle n^\frac{1}{2}\,\sqrt{2\,s-1}\,\|u\|_{L^2(A_\varepsilon)}}\\
&\le \sqrt{\frac{C}{s\,(1-s)}}\,\frac{\|u'\|_{L^2((-1,1))}^{s}}{\|u\|_{L^2(A_\varepsilon)}}\, \frac{n^{-s}}{\sqrt{2\,s-1}}+C\,\frac{\|u\|_{L^\infty((-1,1))}}{\|u\|_{L^2(A_\varepsilon)}}\,\sqrt{3}\,\varepsilon^{\frac{1}{2}-s}.
\end{split}
\]
It is now important to make a clever choice of the parameters $n$ and $\varepsilon$: we take them to be
\[
\varepsilon=\left(\frac{1}{10}\right)^\frac{1}{2\,s-1}\qquad \mbox{ and }\qquad n=\left(\left\lfloor \frac{1}{2\,s-1}\right\rfloor+1\right)^2.
\]
Observe that with these choices we have 
\[
\lim_{s\searrow \frac{1}{2}} \varepsilon=0\qquad \mbox{ and }\qquad \varepsilon^{\frac{1}{2}-s}=\sqrt{10},
\]
and
\[
\lim_{s\searrow \frac{1}{2}} \frac{n^{-s}}{\sqrt{2\,s-1}}\le \lim_{s\searrow \frac{1}{2}} (2\,s-1)^{2\,s-\frac{1}{2}}=0,
\]
where we also used \eqref{mantissa}. Moreover, by using the Dominated Convergence Theorem, we also have
\[
\lim_{s\searrow \frac{1}{2}}\|u\|_{L^2(A_\varepsilon)}=\|u\|_{L^2((-1,1))}=1.
\]
These facts finally enable us to conclude that
\[
\limsup_{s\searrow \frac{1}{2}}\sqrt{\frac{\lambda_1^s(\mathbb{R}\setminus\mathbb{Z})}{2\,s-1}}\le \sqrt{30}\,C\,\|u\|_{L^\infty((-1,1))}<+\infty.
\]
The proof is now over.	

\appendix

\section{A bi-Lipschitz homeomorphism}
\label{app:A}

In what follows, for every open set bounded $K\subseteq\mathbb{R}^N$ and every $x_0\in K$, we define 
\[
d_K(x_0)=\min_{x\in\partial K} |x-x_0|,\qquad D_K(x_0)=\max_{x\in \partial K} |x-x_0|.
\]
\begin{lemma}\label{lemma: cambio di variabili bi-lipschitz}
	Let $K\subseteq\mathbb{R}^N$ be an open bounded convex set and $x_0\in K$. There exists a bi-Lipschitz homeomorphism $\Phi_{K,x_0}:\mathbb{R}^N\to \mathbb{R}^N$ with the following properties: 
\begin{itemize}
\item $\Phi_{K,x_0}(x_0)=x_0$ and $\Phi_{K,x_0}(r\,(K-x_0)+x_0)=B_r(x_0)$, for every $r>0$;
\vskip.2cm
\item $\Phi_{K,x_0}$ is $L_K-$Lipschitz with 
\[
L_K=\frac{2}{d_K(x_0)};
\]
\item $\Phi^{-1}_{K,x_0}$ is $M_K-$Lipschitz with 
\[ 
 M_K=D_K(x_0)\,\left(2+\frac{D_K(x_0)}{d_K(x_0)}\right).
\]
\end{itemize}
Moreover, we have
\begin{equation}
\label{jacobian}
	\left(\dfrac{1}{M_K}\right)^N\le|\mathrm{det} \nabla\Phi_{K,x_0}(x)|\le (L_K)^N,\qquad \mbox{ for a.\,e. }x\in \mathbb{R}^N.
\end{equation}	
and
\begin{equation}
\label{jacobianinv}
	\left(\dfrac{1}{L_K}\right)^N\le|\mathrm{det} \nabla\Phi^{-1}_{K,x_0}(x)|\le (M_K)^N,\qquad \mbox{ for a.\,e. }x\in \mathbb{R}^N.
\end{equation}	
\end{lemma}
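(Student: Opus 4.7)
The plan is to build $\Phi_{K,x_0}$ as the radial rescaling induced by the Minkowski functional (gauge) of the translated set $K':=K-x_0$, namely
\[
p_{K'}(y):=\inf\Big\{t>0\,:\, y/t\in K'\Big\},\qquad y\in\mathbb{R}^N.
\]
Since $K'$ is open, bounded and convex with $0\in K'$, the gauge $p_{K'}$ is positively $1$-homogeneous, subadditive, and satisfies $K'=\{p_{K'}<1\}$. From the inclusions $B_{d_K(x_0)}\subseteq K'\subseteq B_{D_K(x_0)}$ I read off
\[
\frac{|y|}{D_K(x_0)}\le p_{K'}(y)\le \frac{|y|}{d_K(x_0)},\qquad y\in\mathbb{R}^N,
\]
and together with subadditivity this gives that $p_{K'}$ is Lipschitz with constant $1/d_K(x_0)$ (and hence differentiable almost everywhere, with $|\nabla p_{K'}|\le 1/d_K(x_0)$ a.e.).

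Next I define
\[
\Phi_{K,x_0}(x):=x_0+p_{K'}(x-x_0)\,\frac{x-x_0}{|x-x_0|},\qquad x\neq x_0,
\]
extended by $\Phi_{K,x_0}(x_0):=x_0$. By $1$-homogeneity, the restriction of $\Phi_{K,x_0}$ to each ray emanating from $x_0$ is a linear stretching by the direction-dependent factor $p_{K'}(\omega)\in[1/D_K(x_0),\,1/d_K(x_0)]$, so $\Phi_{K,x_0}$ is a homeomorphism of $\mathbb{R}^N$ with inverse
\[
\Phi_{K,x_0}^{-1}(x_0+w)=x_0+\frac{w}{p_{K'}(w/|w|)}=x_0+\frac{|w|\,w}{p_{K'}(w)}.
\]
The identity $p_{K'}(ry)=r\,p_{K'}(y)$ for $r>0$ combined with $K'=\{p_{K'}<1\}$ yields $\Phi_{K,x_0}(r(K-x_0)+x_0)=B_r(x_0)$, as required.

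For the Lipschitz constant of $\Phi_{K,x_0}$, I differentiate $f(y):=p_{K'}(y)\,y/|y|$ on the full-measure set where $p_{K'}$ is differentiable, obtaining
\[
\nabla f(y)=\hat y\otimes \nabla p_{K'}(y)+\frac{p_{K'}(y)}{|y|}\,\bigl(I-\hat y\otimes \hat y\bigr),\qquad \hat y=y/|y|.
\]
For any unit vector $v$, the two summands of $\nabla f(y)v$ are orthogonal (one is parallel to $\hat y$, the other lies in $\langle \hat y\rangle^\perp$), and each has norm at most $1/d_K(x_0)$. Hence $|\nabla f(y)v|\le \sqrt 2/d_K(x_0)\le L_K$ a.e., which integrates to the global Lipschitz bound $L_K=2/d_K(x_0)$ for $\Phi_{K,x_0}$ (the singularity at $x_0$ is harmless since a Lipschitz bound on the complement of a point extends by continuity). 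For the inverse, writing $g(w):=|w|\,w/p_{K'}(w)$, a direct computation gives
\[
\nabla g(w)=\frac{|w|}{p_{K'}(w)}\left(I+\hat w\otimes\hat w-\frac{w\otimes\nabla p_{K'}(w)}{p_{K'}(w)}\right),
\]
whose operator norm is bounded by $\tfrac{|w|}{p_{K'}(w)}\bigl(2+\tfrac{|w|\,|\nabla p_{K'}(w)|}{p_{K'}(w)}\bigr)\le D_K(x_0)\bigl(2+D_K(x_0)/d_K(x_0)\bigr)=M_K$, using $|w|/p_{K'}(w)\le D_K(x_0)$ and $|\nabla p_{K'}|\le 1/d_K(x_0)$.

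Finally, the Jacobian estimates \eqref{jacobian} and \eqref{jacobianinv} follow from the pointwise bound $|\det A|\le \|A\|^N$ applied to $\nabla\Phi_{K,x_0}$ and to $\nabla\Phi_{K,x_0}^{-1}$, combined with the identity $\det\nabla\Phi_{K,x_0}^{-1}(\Phi_{K,x_0}(x))=1/\det\nabla\Phi_{K,x_0}(x)$ that holds a.e. by the bi-Lipschitz character of $\Phi_{K,x_0}$. The main obstacle is the derivative computation for $g$: being careful about the partial derivatives of $|w|/p_{K'}(w)$ near directions of non-smoothness of $p_{K'}$, and extracting the precise constant $M_K$ rather than a crude bound in terms of $D_K^2/d_K$.
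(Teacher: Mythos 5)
Your construction is exactly the one used in the paper: the radial map $\Phi_{K,x_0}(x)=x_0+j_K(x)\,\frac{x-x_0}{|x-x_0|}$ built from the Minkowski gauge of $K-x_0$, with the same explicit inverse and the same constants $L_K$ and $M_K$; the difference lies in how the Lipschitz bounds are proved. The paper estimates the difference quotients directly, combining the elementary inequality $\left|\frac{x-x_0}{|x-x_0|}-\frac{y-x_0}{|y-x_0|}\right|\le \frac{|x-y|}{\sqrt{|x-x_0|\,|y-x_0|}}$ with the Lipschitz bound $|j_K(x)-j_K(y)|\le |x-y|/d_K(x_0)$ and the lower bound $j_K(x)\ge |x-x_0|/D_K(x_0)$, whereas you invoke Rademacher, compute the differential almost everywhere and bound its operator norm, then integrate. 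Your computation is correct: the orthogonal splitting for the forward map even gives the a.e. bound $\sqrt{2}/d_K(x_0)\le L_K$, and your bound for $\nabla g$ yields exactly $M_K$; moreover this route makes the Jacobian estimates essentially automatic. What it costs are two routine but genuinely needed technical points which you should spell out: (i) upgrading an a.e. differential bound on the punctured set $\mathbb{R}^N\setminus\{x_0\}$ to a global Lipschitz estimate (integrate along segments avoiding the null set of non-differentiability; segments through $x_0$ are handled by approximation when $N\ge 2$, and by the triangle inequality through $x_0$ when $N=1$, using continuity of $\Phi_{K,x_0}$ at $x_0$); and (ii) the a.e. identity $\det\nabla\Phi^{-1}_{K,x_0}(\Phi_{K,x_0}(x))\cdot\det\nabla\Phi_{K,x_0}(x)=1$, which rests on the chain rule for Lipschitz maps together with the fact that bi-Lipschitz maps carry Lebesgue null sets to null sets. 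The paper's difference-quotient argument avoids both of these subtleties at the price of slightly longer algebra; both routes deliver the same constants.
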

\begin{proof}
For notational simplicity, we omit to indicate the subscript $x_0$ everywhere.
We define at first the {\it Minkowski functional of $K$ centered at $x_0$}, i.e.
\[
j_{K}(x)=\inf\Big\{\lambda>0\, :\, x\in \lambda\,(K-x_0)+x_0\Big\}.
\]
We recall that this a Lipschitz function, which verifies the following homogeneity property
\begin{equation}
\label{homo}
j_{K}(t\,(x-x_0)+x_0)=t\,j_{K}(x),\qquad \mbox{ for every } x\in\mathbb{R}^N,\ t>0.
\end{equation}
We also observe that by construction it holds
\[
j_{K}(x)<r\quad \mbox{ if and only if }\quad x\in r\,(K-x_0)+x_0,
\]
and that 
\[
j_{K}(x)=r\quad \mbox{ if and only if }\quad x\in r\,(\partial K-x_0)+x_0.
\]
Moreover, $j_{K}$ is Lipschitz and it holds
\begin{equation}
\label{lipj}
|j_K(x)-j_K(y)|\le \frac{1}{d_K(x_0)}\,|x-y|,\qquad \mbox{ for every } x,y\in\mathbb{R}^N.
\end{equation}
Last, but not least, we have the following lower bound
\begin{equation}
\label{lowerbound}
j_K(x)=|x-x_0|\,j_K\left(\frac{x-x_0}{|x-x_0|}+x_0\right)\ge \frac{|x-x_0|}{D_K(x_0)}.
\end{equation}
Then we define $\Phi_{K}$ as follows
\[
\Phi_K(x_0)=x_0,\qquad \Phi_K(x)=\frac{x-x_0}{|x-x_0|}\,j_K(x)+x_0,\qquad \mbox{ if } x\in \mathbb{R}^N\setminus\{x_0\}.
\]
Thanks to the properties of the Minkowski functional, we have that $\Phi_K$ is injective. In order to verify that $\Phi_K$ is bijective, let us take $y\in \mathbb{R}^N\setminus\{x_0\}$. 
We then define
\begin{equation}
\label{inverse}
\overline{x}=\frac{|y-x_0|}{j_K(y)}\,(y-x_0)+x_0,
\end{equation}
we claim that $\Phi_K(\overline{x})=y$. Indeed, by construction we have
\[
\Phi_K(\overline{x})=\frac{\overline{x}-x_0}{|\overline{x}-x_0|}\,j_K(\overline{x})+x_0=
\frac{y-x_0}{|y-x_0|}\,j_K\left(\frac{|y-x_0|}{j_K(y)}\,(y-x_0)+x_0\right)+x_0.
\]
From property \eqref{homo} we get
\[
\Phi_K(\overline{x})=\frac{y-x_0}{|y-x_0|}\,j_K\left(\frac{|y-x_0|}{j_K(y)}\,(y-x_0)+x_0\right)+x_0=\frac{y-x_0}{|y-x_0|}\,j_K(y)\, \frac{|y-x_0|}{j_K(y)}+x_0=y,
\]
as desired. This shows that $\Phi_K$ is bijective and from \eqref{inverse} we get
\[
\Phi_K^{-1}(y)=\frac{|y-x_0|}{j_K(y)}\,(y-x_0)+x_0,\qquad \mbox{ for } y\in\mathbb{R}^N\setminus\{x_0\}.
\]
Thanks to the properties of the Minkowski functional, it is easily seen that 
\[
\Phi_K(r\,(K-x_0)+x_0)= B_r(x_0),\qquad \mbox{ for every } r>0.
\]
We now claim that both $\Phi_K$ and its inverse are Lipschitz continuous. We start with $\Phi_K$: we take $x,y\in \mathbb{R}^N\setminus\{x_0\}$ and, without loss of generality, we can suppose that $|y-x_0|\le |x-x_0|$. By the triangle inequality we get
\begin{equation}
\label{1lip}
\begin{split}
|\Phi_K(x)-\Phi_K(y)|&\le j_K(y)\,\left|\frac{x-x_0}{|x-x_0|}-\frac{y-x_0}{|y-x_0|}\right|+|j_K(x)-j_K(y)|\\
&\le j_K(y)\,\frac{|x-y|}{\sqrt{|x-x_0|\,|y-x_0|}}+|j_K(x)-j_K(y)|,
\end{split}
\end{equation}
where we used that
\[
\begin{split}
\left|\frac{x-x_0}{|x-x_0|}-\frac{y-x_0}{|y-x_0|}\right|^2&=2-2\,\frac{\langle x-x_0,y-x_0\rangle}{|x-x_0|\,|y-x_0|}\\
&\le \frac{|x-x_0|^2+|y-x_0|^2}{|x-x_0|\,|y-x_0|}-2\,\frac{\langle x-x_0,y-x_0\rangle}{|x-x_0|\,|y-x_0|}=\frac{|x-y|^2}{|x-x_0|\,|y-x_0|},
\end{split}
\]
thanks to Young's inequality. By using \eqref{lipj} and the fact that $|y-x_0|\le |x-x_0|$,
we get from \eqref{1lip}
\[
|\Phi_K(x)-\Phi_K(y)|\le \frac{1}{d_K(x_0)}\, \left[|y-x_0|\,\frac{|x-y|}{\sqrt{|x-x_0|\,|y-x_0|}}+|x-y|\right]\le \frac{2}{d_K(x_0)}\, |x-y|.
\]
This proves the claimed Lipschitz regularity of $\Phi_K$. 
\par
We now turn our attention to the inverse function $\Phi_K^{-1}$. We proceed in a similar way: we take $x,y\in \mathbb{R}^N\setminus\{x_0\}$ and we can suppose that $|y-x_0|\le |x-x_0|$. 
Then by the triangle inequality
\[
\begin{split}
|\Phi_K^{-1}(x)-\Phi_K^{-1}(y)|&\le \frac{1}{j_K(x)}\,\Big||x-x_0|\,(x-x_0)-|y-x_0|\,(y-x_0)\Big|\\
&+|y-x_0|^2\,\left|\frac{1}{j_K(x)}-\frac{1}{j_K(y)}\right|.
\end{split}
\]
By using \eqref{lowerbound} and observing that 
\[
\Big||x-x_0|\,(x-x_0)-|y-x_0|\,(y-x_0)\Big|\le (|x-x_0|+|y-x_0|)\,|x-y|=2\,|x-x_0|\,|x-y|,
\]
we get that
\[
\begin{split}
|\Phi_K^{-1}(x)-\Phi_K^{-1}(y)|&\le\frac{2\,|x-x_0|}{j_K(x)}\,|x-y|+\frac{|y-x_0|^2}{j_K(x)\,j_K(y)}\,|j_K(x)-j_K(y)|\\
&\le2\,D_K(x_0)\,|x-y|
+D_K(x_0)^2\,\frac{|y-x_0|^2}{|x-x_0|\,|y-x_0|}\,\frac{|x-y|}{d_K(x_0)}\\
&\le2\,D_K(x_0)\,|x-y|+D_K(x_0)^2\,\frac{|x-y|}{d_K(x_0)}\\
&=D_K(x_0)\,\left(2+\frac{D_K(x_0)}{d_K(x_0)}\right)\,|x-y|.
\end{split}
\]
This gives the desired Lipschitz estimate for $\Phi_K^{-1}$, as well.
\par
Finally, the two-sided estimates \eqref{jacobian} and \eqref{jacobianinv} are a standard consequence of the Lipschitz estimates on $\Phi_K$ and $\Phi_K^{-1}$, in conjunction with the Area Formula for Lipschitz functions and Rademacher's Theorem.
\end{proof}

\section{A special cut-off function}

\begin{lemma}
\label{lm:stimonaMFO}
Let $1/2<s<1$ and let 
\[
\zeta_s(x)=\Big(1-|x|^{2\,s-1}\Big)_+,\qquad \mbox{ for } x\in\mathbb{R}.
\]
Then we have
\begin{equation}
\label{stimonaMFO}
[\zeta_s]_{W^{s,2}(\mathbb{R})}\le C\,\frac{\sqrt{2\,s-1}}{\sqrt{1-s}},
\end{equation}
with a constant $C>0$ independent of $s\in(1/2,1)$.

\end{lemma}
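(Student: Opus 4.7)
The plan is to estimate the Sobolev--Slobodecki\u{\i} seminorm of $\zeta_s$ directly from its explicit form. Since $\zeta_s$ is even and supported on $[-1,1]$, the double integral splits into an ``interior'' contribution over $(-1,1)^2$ and a ``tail'' contribution, where exactly one of $x,y$ lies outside $(-1,1)$. Decomposing the interior by the signs of $x,y$ and performing the substitution $x\mapsto -x$ on the mixed-sign piece transforms its denominator into $(|x|+|y|)^{1+2s}\ge |x-y|^{1+2s}$, which shows that the mixed-sign piece is dominated by the same-sign piece. Everything therefore reduces to bounding
\[
I_{++} := \iint_{(0,1)^2}\frac{(y^{2s-1}-x^{2s-1})^2}{|x-y|^{1+2s}}\,dx\,dy
\]
and the tail integral.

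For $I_{++}$, the homogeneity of $t\mapsto t^{2s-1}$ makes the scaling $x=y\,u$ separate variables cleanly and yields
\[
I_{++} = \frac{2}{2s-1}\,J,\qquad J := \int_0^1 \frac{(1-u^{2s-1})^2}{(1-u)^{1+2s}}\,du.
\]
The decisive input is the elementary inequality $1-u^{a}\le a\,(-\log u)$ for $u\in(0,1]$ and $a>0$ (which follows from $1-e^{-t}\le t$ applied to $t=-a\log u$); this extracts the factor $(2s-1)^2$ and gives
\[
J \le (2s-1)^2 \int_0^1 \frac{(\log u)^2}{(1-u)^{1+2s}}\,du.
\]
The remaining integral is bounded by $C/(1-s)$ uniformly in $s\in(1/2,1)$: splitting at $u=1/2$, the near-$u=1$ part uses $|\log u|\le 2(1-u)$ to reduce (after substitution $v=1-u$) to $\int_0^{1/2} v^{1-2s}\,dv\le \frac{1}{2(1-s)}$, while the near-$u=0$ part is a uniformly bounded constant since $(\log u)^2$ is integrable on $(0,1/2)$ and the kernel $(1-u)^{-1-2s}$ is bounded away from $u=1$. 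Hence $I_{++}\le C\,(2s-1)/(1-s)$.

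For the tail, explicit integration in $y$ produces the kernel $\frac{1}{2s}\bigl((1-x)^{-2s}+(1+x)^{-2s}\bigr)$ weighted by $\zeta_s(x)^2$. Applying the same logarithmic bound $(1-|x|^{2s-1})^2\le (2s-1)^2 (\log|x|)^2$ on $(-1,1)$ and noting that the resulting log-weighted integrals are uniformly finite in $s\in(1/2,1)$ (because $(\log|x|)^2$ vanishes quadratically at $|x|=1$, which tames the singularity of $(1-|x|)^{-2s}$) yields $I_{\mathrm{tail}}\le C\,(2s-1)^2$. Since $(2s-1)^2\le 2s-1\le (2s-1)/(1-s)$ for $s\in[1/2,1)$, this is absorbed by the interior estimate, and the claim $[\zeta_s]_{W^{s,2}(\mathbb{R})}^2\le C\,(2s-1)/(1-s)$ follows.

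The main obstacle is the estimate of $J$: a naive H\"older bound $|1-u^{2s-1}|\le |1-u|^{2s-1}$ produces a divergent integral, so the refined pointwise inequality $1-u^{2s-1}\le (2s-1)|\log u|$ is indispensable. Moreover, the split point in $J$ has to be chosen so that the logarithmic blow-up at $u=0$ contributes only a bounded constant while the polynomial singularity at $u=1$ is exactly where the factor $1/(1-s)$ arises; these two effects have to be separated cleanly to produce the sharp $(2s-1)/(1-s)$ scaling demanded by the statement.
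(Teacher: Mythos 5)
Your proof is correct, and it follows essentially the same route as the paper: split the seminorm into the contribution over $(-1,1)^2$ plus the two boundary terms, reduce the interior integral to a one-dimensional one carrying the factor $\frac{1}{2s-1}$, and extract $(2s-1)^2$ from the pointwise bound $1-u^{2s-1}\le (2s-1)\log(1/u)$, with the factor $\frac{1}{1-s}$ coming from the singularity at $u=1$ after splitting at $u=1/2$. The only minor deviations are that you obtain the one-dimensional reduction by the explicit scaling $x=y\,u$ (an exact identity, where the paper instead invokes formula (4.3) of \cite{BBZ}), and near $u=1$ you combine the logarithmic bound with $\log(1/u)\le 2\,(1-u)$, whereas the paper uses the concavity inequality $1-\tau^{2s-1}\le (2s-1)\,\tau^{2s-2}\,(1-\tau)$; both choices give the same asymptotics in $s$.
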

\begin{proof}
We decompose the seminorm as follows
\begin{equation}
\label{3pezzi}
\begin{split}
[\zeta_s]^2_{W^{s,2}(\mathbb{R})}&=\iint_{(-1,1)\times (-1,1)} \frac{\Big||x|^{2\,s-1}-|y|^{2\,s-1}\Big|^2}{|x-y|^{1+2\,s}}\,dx\,dy\\
&+\frac{1}{s}\,\int_{-1}^1 \frac{\Big|1-|x|^{2\,s-1}\Big|^2}{(1-x)^{2\,s}}\,dx+\frac{1}{s}\,\int_{-1}^1 \frac{\Big|1-|x|^{2\,s-1}\Big|^2}{(1+x)^{2\,s}}\,dx=\mathcal{I}_1+\mathcal{I}_2+\mathcal{I}_3.
\end{split}
\end{equation}
In order to prove \eqref{stimonaMFO}, we will prove that 
\begin{equation}
\label{stimonaMFOpezzi}
\mathcal{I}_i\le C\,\frac{2\,s-1}{1-s},\qquad \mbox{ for }i=1,2,3.
\end{equation}
For the first term $\mathcal{I}_1$, we observe that by using the symmetry of the set and of the integrand, we have
\[
\mathcal{I}_1\le 4\,\iint_{(0,1)\times (0,1)} \frac{\Big||x|^{2\,s-1}-|y|^{2\,s-1}\Big|^2}{|x-y|^{1+2\,s}}\,dx\,dy.
\]
By using \cite[Remark 4.2, formula (4.3)]{BBZ} with the choice $\beta=2\,s-1$ there, we can estimate the last double integral as follows
\[
\mathcal{I}_1\le 4\,\iint_{(0,1)\times (0,1)} \frac{\Big||x|^{2\,s-1}-|y|^{2\,s-1}\Big|^2}{|x-y|^{1+2\,s}}\,dx\,dy\le \left(\int_0^1 \frac{|1-\tau^{2\,s-1}|^2}{|1-\tau|^{1+2\,s}}\,\left(1+\tau^{1-2\,s}\right)\,d\tau\right)\frac{4}{2\,s-1}.
\]
We then write
\[
\begin{split}
\int_0^1 \frac{|1-\tau^{2\,s-1}|^2}{|1-\tau|^{1+2\,s}}\,\left(1+\tau^{1-2\,s}\right)\,d\tau&=\int_0^\frac{1}{2} \frac{|1-\tau^{2\,s-1}|^2}{|1-\tau|^{1+2\,s}}\,\left(1+\tau^{1-2\,s}\right)\,d\tau\\
&+\int_\frac{1}{2}^1 \frac{|1-\tau^{2\,s-1}|^2}{|1-\tau|^{1+2\,s}}\,\left(1+\tau^{1-2\,s}\right)\,d\tau\\
&\le C\, \int_0^\frac{1}{2} |1-\tau^{2\,s-1}|^2\,\left(1+\tau^{1-2\,s}\right)\,d\tau\\
&+C\,\int_\frac{1}{2}^1 \frac{|1-\tau^{2\,s-1}|^2}{|1-\tau|^{1+2\,s}}\,d\tau=:\mathcal{I}_{1,1}+\mathcal{I}_{1,2}.
\end{split}
\]
The constant $C>0$ can be taken independent of $s$.
We start by estimating $\mathcal{I}_{1,2}$, which is simpler: we use the following pointwise inequality  
\[
a^\alpha-b^\alpha\le \alpha\,b^{\alpha-1}\,(a-b),\qquad \mbox{ for } 0<b\le a, \, 0<\alpha<1,
\]
which just follows from concavity of the map $\tau\mapsto \tau^{\alpha}$. This gives 
\[
\mathcal{I}_{1,2}\le C\,16^{1-s}\,(2\,s-1)^2\,\int_\frac{1}{2}^1 (1-\tau)^{1-2\,s}\,d\tau=\frac{C}{2\,(1-s)}\,4^{1-s}\,(2\,s-1)^2,
\]
as desired. We now come to $\mathcal{I}_{1,1}$, which is the most subtle. We set for simplicity
\[
f_\tau(s)=\tau^{2\,s-1},\qquad \mbox{ for } \tau>0,\, s>\frac{1}{2}.
\]
Then we have 
\[
\left|f_\tau(s)-f_\tau\left(\frac{1}{2}\right)\right|=\left|\int^s_\frac{1}{2} f'_\tau(t)\,dt\right|,
\]
that is 
\[
|1-\tau^{2\,s-1}|=2\,|\log \tau|\,\left|\int_\frac{1}{2}^s \tau^{2\,t-1}\,dt\right|\le 2\,(-\log \tau)\,\left(s-\frac{1}{2}\right)=(-\log \tau)\,(2\,s-1).
\]
Thus we get
\[
\mathcal{I}_{1,1}\le C\,(2\,s-1)^2\,\int_0^\frac{1}{2} (-\log \tau)^2\,(1+\tau^{2\,s-1})\,d\tau\le 2\,C\,(2\,s-1)^2\,\int_0^\frac{1}{2} (-\log \tau)^2\,d\tau.
\]
This gives the desired estimate, since the last integral is finite. By collecting the estimates for $\mathcal{I}_{1,1}$ and $\mathcal{I}_{1,2}$, we thus get \eqref{stimonaMFO} for $\mathcal{I}_1$.
\par
We now  consider $\mathcal{I}_2$ and $\mathcal{I}_3$. We only estimate the first one, since the estimate for the second one is similar. For $s>1/2$, we have
\[
\begin{split}
\frac{1}{s}\,\int_{-1}^1 \frac{\Big|1-|x|^{2\,s-1}\Big|^2}{(1-x)^{2\,s}}\,dx&\le 2\,\int_0^1 \frac{(1-x^{2\,s-1})^2}{(1-x)^{2\,s}}\,dx+2\,\int_{-1}^0\frac{(1-|x|^{2\,s-1})^2}{(1-x)^{2\,s}}\,dx\\
&\le 2\,\int_0^1 \frac{(1-x^{2\,s-1})^2}{(1-x)^{2\,s}}\,dx+2\,\int_{-1}^0 (1-|x|^{2\,s-1})^2\,dx\\
&= 2\,\int_\frac{1}{2}^1 \frac{(1-x^{2\,s-1})^2}{(1-x)^{2\,s}}\,dx+2\,\int_0^\frac{1}{2} \frac{(1-x^{2\,s-1})^2}{(1-x)^{2\,s}}\,dx \\
&+2\,\int_{0}^1 (1-x^{2\,s-1})^2\,dx\\
&\le 2\,\int_\frac{1}{2}^1 \frac{(1-x^{2\,s-1})^2}{(1-x)^{2\,s}}\,dx+2\cdot 4^s\,\int_0^\frac{1}{2} (1-x^{2\,s-1})^2\,dx\\
&+2\,\int_{0}^1 (1-x^{2\,s-1})^2\,dx\\
&\le 2\,\int_\frac{1}{2}^1 \frac{(1-x^{2\,s-1})^2}{(1-x)^{2\,s}}\,dx+2\,(4^s+1)\,\int_0^1 (1-x^{2\,s-1})\,dx.
\end{split}
\]
By computing the last integral, this gives in particular
\[
\frac{1}{s}\,\int_{-1}^1 \frac{\Big|1-|x|^{2\,s-1}\Big|^2}{(1-x)^{2\,s}}\,dx\le 2\,\int_\frac{1}{2}^1 \frac{(1-x^{2\,s-1})^2}{(1-x)^{2\,s}}\,dx+(4^s+1)\,\frac{2\,s-1}{s}.
\]
At this point, the integral in the right-hand side can be estimated as we did for $\mathcal{I}_{1,2}$ above. By proceeding as before, we get \eqref{stimonaMFOpezzi} for $\mathcal{I}_2$ (and thus for $\mathcal{I}_3$), as well. 
\end{proof}

\medskip

\end{document}